\title{Stein structures: existence and flexibility}
\author{K.~Cieliebak and Y.~Eliashberg} 
\date{}  
\let\oldmarginpar\marginpar
\renewcommand\marginpar[1]{\-\oldmarginpar[\raggedleft\footnotesize #1]%
{\raggedright\footnotesize #1}}
\theoremstyle{plain}
\newtheorem{theorem}{Theorem}[section]
\newtheorem{thm}[theorem]{Theorem}
\newtheorem{cor}[theorem]{Corollary}
\newtheorem{prop}[theorem]{Proposition}
\newtheorem{lemma}[theorem]{Lemma}
\newtheorem{problem}[theorem]{Problem}
\theoremstyle{remark}
\newtheorem{remark}[theorem]{Remark}
\newtheorem*{remark*}{Remark}
\newtheorem*{example*}{Example}
\theoremstyle{definition}
\newcommand{\id}{{\rm id}}
\newcommand{\wt}{\widetilde}
\newcommand{\wh}{\widehat}
\newcommand{\p}{\partial}
\newcommand{\om}{\omega}
\newcommand{\eps}{\varepsilon}
\newcommand{\into}{\hookrightarrow}
\newcommand{\N}{{\mathbb{N}}}
\newcommand{\Z}{{\mathbb{Z}}}
\newcommand{\R}{{\mathbb{R}}}
\newcommand{\C}{{\mathbb{C}}}
\newcommand{\ind}{{\rm ind}}
\newcommand{\st}{{\rm st}}
\newcommand{\Int}{{\rm Int\,}} 
\renewcommand{\min}{{\rm min}}
\renewcommand{\max}{{\rm max}}
\newcommand{\tb}{{\rm tb}}
\newcommand{\loc}{{\rm loc}}
\newcommand{\dist}{{\rm dist}}
\newcommand{\Imm}{\mathrm{Imm}}
\newcommand{\Mon}{\mathrm{Mon}}
\newcommand{\Hom}{\mathrm{Hom}}
\newcommand{\smoothmax}{\mathrm{smooth}\,\mathrm{max}}
\newcommand{\inj}{\mathrm{inj}}
\newcommand{\emb}{\mathrm{emb}}
\newcommand{\Emb}{\mathrm{Emb}}
\newcommand{\EE}{\mathcal{E}}
\newcommand{\PP}{\mathcal{P}}
\newcommand{\RR}{\mathcal{R}}
\newcommand{\II}{\mathcal{I}}
\def\Op{{\mathcal O}{\it p}\,}
\numberwithin{figure}{section}
\begin{document}

\begin{abstract}
This survey on the topology of Stein manifolds is an extract from the
book~\cite{CieEli12}.
It is compiled from two short lecture series given by the first author
in 2012 at the Institute for Advanced Study, Princeton, and the
Alfr\'ed R\'enyi Institute of Mathematics, Budapest. 
\end{abstract}

\maketitle

\section{The topology of Stein manifolds}\label{sec:conv}

Throughout this article, $(V,J)$ denotes a smooth manifold
(without boundary) of real dimension $2n$ equipped with an {\em almost
  complex structure} $J$, i.e., an endomorphism $J:TV\to TV$ satisfying
$J^2=-\id$. The pair $(V,J)$ is called an {\em almost complex manifold}. 
It is called a {\em complex manifold} if the almost complex structure
$J$ is {\em integrable}, 
i.e., $J$ is induced by complex coordinates on $V$.
By the theorem of Newlander and Nirenberg~\cite{NewNir57}, 
a (sufficiently smooth) almost complex structure
$J$ is integrable if and only if its Nijenhuis tensor
$$
   N(X,Y):=[JX,JY]-[X,Y]-J[X,JY]-J[JX,Y], \qquad X,Y\in TV,
$$
vanishes identically. An integrable almost complex structure is called
a {\em complex structure}. 

A complex manifold $(V,J)$ is called {\em Stein} if it admits a proper
holomorphic embedding into some $\C^N$. Note that, due to the maximum
principle, every Stein manifold is open, i.e., it has no compact
components. 

By a theorem of Grauert, Bishop and
Narasimhan~\cite{Gra58,Bis61,Nar60}, a complex manifold 
$(V,J)$ is Stein if and only if it admits a smooth function
$\phi:V\to\R$ which is
\begin{itemize}
\item {\em exhausting}, i.e., proper and bounded from below, and
\item {\em $J$-convex} (or strictly plurisubharmonic),
  i.e., $-dd^\C\phi(v,Jv)>0$ for all $0\neq v\in TV$, where
  $d^\C\phi:=d\phi\circ J$. 
\end{itemize}
Note that the second condition means that $\om_\phi:=-dd^\C\phi$ is a
symplectic form compatible with $J$. Note also that the ``only if''
follows simply by restricting the $i$-convex function $\phi(z)=|z|^2$
on $\C^N$ (where $i$ denotes the standard complex structure) to a
properly embedded complex submanifold. Here are some examples of Stein
manifolds. 

(1) $(\C^n,i)$ is Stein, and properly embedded complex submanifolds of
Stein manifolds are Stein. 

(2) If $X$ is a closed complex submanifold of some projective space
$\C P^N$ and $H\subset\C P^N$ is a hyperplane, then $X\setminus H$ is 
Stein. 

(3) All open Riemann surfaces are Stein. 

(4) If $\phi:V\to\R$ is $J$-convex, then so is $f\circ\phi$ for any
smooth function $f:\R\to\R$ with $f'>0$ and $f''\geq 0$ (such 
$f$ will be called a {\em convex increasing function}). Given an
exhausting $J$-convex function $\phi:V\to\R$ and any $c\in\R$, we can
pick a diffeomorphism $f:(-\infty,c)\to\R$ with $f'>0$ and $f''\geq
0$; then $f\circ\phi$ is an exhausting $J$-convex function
$\{\phi<c\}\to\R$, hence the sublevel set $\{\phi<c\}$ is Stein.   

(5) Any strictly convex smooth function $\phi:\C^n\to\R$ is
$i$-convex. As a consequence, using (4), all convex open subsets of
$\C^n$ are Stein. 

(6) Let $L\subset V$ be a properly embedded {\em totally real}
submanifold, i.e., $L$ has real dimension $n$ and $T_xL\cap
J(T_xL)=\{0\}$ for all $x\in L$. Then 
the squared distance function $\dist_L^2:V\to\R$ from $L$ with respect
to any Hermitian metric on $V$ is $J$-convex on a neighbourhood of
$L$. As a consequence, $L$ has arbitrarily small Stein tubular
neighbourhoods in $V$ (which by (4) can be taken as sublevel sets
$\{\dist_L^2<\eps\}$ if $L$ is compact, but are more difficult to
construct if $L$ is noncompact). 

\begin{problem}\footnote{
``Problems'' in this survey are meant to be reasonably hard exercises
for the reader.}
Prove (1), (2), and the first statements in (4), (5), (6). 
\end{problem} 

\begin{problem}
A quadratic function
$\phi(z_1,\dots,z_n)=\sum_{j=1}^n(a_jx_j^2+b_jy_j^2)$ on $\C^n$ with
coordinates $z_j=x_j+iy_j$ is $i$-convex iff $a_j+b_j>0$ for all
$j=1,\dots,n$. A smooth function $\phi:\C\to\R$ is $i$-convex iff
$\Delta\phi>0$, i.e., $\phi$ is strictly subharmonic. 
\end{problem}

\begin{problem}
For an almost complex manifold $(V,J)$ define $\om_\phi:=-d(d\phi\circ
J)$ as in the integrable case. Then $\om_\phi(\cdot,J\cdot)$ is
symmetric for every function $\phi:V\to\R$ iff $J$ is integrable.  
\end{problem}
\medskip

Let us now turn to the following question: {\it Which smooth manifolds
  $V$ admit the structure of a Stein manifold?} 

Clearly, one necessary condition is the existence of a (not
necessarily integrable) almost complex structure on $V$. This is a
topological condition on the tangent bundle of $V$ which can be
understood in terms of obstruction theory. For example, the odd
Stiefel-Whitney classes of $TV$ must vanish and the even ones must
have integral lifts.  

A second necessary condition arises from Morse theory.  
Recall that a smooth function $\phi:V\to\R$ is called {\em Morse} if
all its critical points are nondegenerate, and the {\em Morse index}
$\ind(p)$ of a critical point $p$ is the maximal dimension of a
subspace of $T_pV$ on which the Hessian of $\phi$ is negative
definite. The following simple observation, due to Milnor and
others, is fundamental for the topology of Stein manifolds. 

\begin{lemma}\label{lem:ind}
The Morse index of each nondegenerate critical point $p$ of a
$J$-convex function $\phi:V\to\R$ satisfies
$$
   \ind(p) \leq n = \dim_\C V.
$$
\end{lemma}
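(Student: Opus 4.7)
The plan is to reduce the Morse-index bound to a pointwise linear-algebra statement about $T_pV$ at a critical point, by computing the symmetric form $-dd^\C\phi(v,Jv)$ in terms of the Hessian of $\phi$ and then exploiting the fact that a subspace on which the Hessian is negative definite cannot be preserved by $J$.

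First I would evaluate $\omega_\phi(v,Jv) = -dd^\C\phi(v,Jv)$ at the critical point $p$. Pick vector fields $V,W$ extending $v,w\in T_pV$ and recall
$$
   dd^\C\phi(V,W) = V(d^\C\phi(W)) - W(d^\C\phi(V)) - d^\C\phi([V,W]).
$$
Since $d\phi(p)=0$, also $d^\C\phi(p)=d\phi\circ J\,|_p = 0$, so the bracket term drops out. Using $d^\C\phi(W)=d\phi(JW)$ and the fact that at a critical point the Hessian $\Hess_p\phi(a,b):=a(b\phi)$ is a well-defined symmetric bilinear form on $T_pV$, setting $w=Jv$ and expanding gives
$$
   -dd^\C\phi(v,Jv)\big|_p = \Hess_p\phi(v,v) + \Hess_p\phi(Jv,Jv).
$$
Hence $J$-convexity of $\phi$ translates at $p$ into the pointwise inequality
$$
   \Hess_p\phi(v,v) + \Hess_p\phi(Jv,Jv) > 0 \qquad \text{for all } 0\neq v\in T_pV.
$$

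Now let $W\subset T_pV$ be a subspace of maximal dimension $k=\ind(p)$ on which $\Hess_p\phi$ is negative definite. If $W\cap JW\neq\{0\}$, pick $0\neq v\in W$ with $Jv\in W$; then both $\Hess_p\phi(v,v)$ and $\Hess_p\phi(Jv,Jv)$ are negative, contradicting the inequality above. Therefore $W\cap JW=\{0\}$, so $W$ and $JW$ together span a subspace of real dimension $2k$ inside $T_pV$, giving $2k\leq 2n$ and hence $\ind(p)\leq n$.

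The only genuinely nontrivial step is the computation of $-dd^\C\phi|_p(v,Jv)$ in terms of the Hessian; the cancellation of the bracket term is precisely what makes this work and is also the reason the statement requires $p$ to be a critical point. After that the conclusion is pure linear algebra on $(T_pV,J_p)$, and no integrability or global Morse theory is needed.
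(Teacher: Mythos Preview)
Your proof is correct and takes a genuinely different route from the paper's argument.

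The paper argues by contradiction via the maximum principle: if $\ind(p)>n$, the negative subspace contains a complex line $L\subset T_pV$; one then picks an embedded holomorphic curve $C$ through $p$ tangent to $L$, observes that $\phi|_C$ has a local maximum at $p$, and derives a contradiction from $\Delta(\phi|_C)>0$. This is geometric and relies on integrability of $J$ to produce the complex curve $C$.

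Your argument instead stays entirely at the linear-algebra level on $T_pV$: you compute $-dd^\C\phi(v,Jv)|_p=\Hess_p\phi(v,v)+\Hess_p\phi(Jv,Jv)$ directly (the bracket term in the Cartan formula for $d$ dies because $d\phi_p=0$), and then observe that a negative-definite subspace $W$ must satisfy $W\cap JW=\{0\}$, forcing $2\dim W\le 2n$. The two proofs share the same core linear-algebraic fact---a negative subspace of dimension $>n$ would contain a $J$-invariant line---but you avoid the analytic detour through complex curves and the maximum principle. A bonus, which you already note, is that your computation never uses integrability of $J$: it goes through verbatim for any almost complex structure, whereas the paper's proof as written needs an embedded holomorphic curve in a prescribed complex direction. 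The paper's approach, on the other hand, has the pedagogical advantage of foreshadowing the role of restrictions to complex curves and the mean value inequality that are used systematically later.
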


\begin{proof}\footnote{
``Proofs'' in this survey are only sketches of proofs; for details
see~\cite{CieEli12}.} 
Suppose $\ind(p)>n$. Then there exists a complex line $L\subset T_pV$ on 
which the Hessian of $\phi$ is negative definite. Pick a small
embedded complex curve $C\subset V$ through $p$ in direction $L$. Then
$\phi|_C$ has a local maximum at $p$, which contradicts the maximum
principle because $\Delta(\phi|_C)>0$.
\end{proof}

This lemma imposes strong restrictions on the topology of Stein
manifolds: Consider a Stein manifold $(V,J)$ with exhausting
$J$-convex function $\phi:V\to\R$. After a $C^2$-small perturbation
(which preserves $J$-convexity) we may assume that $\phi$ is
Morse. Thus, by Lemma~\ref{lem:ind} and Morse theory, $V$ is obtained
from a union of balls by attaching handles $D^k\times D^{2n-k}_\eps$ of
indices $k\leq n$. In particular, all homology groups $H_i(V;\Z)$ with
$i>n$ vanish. 

Surprisingly, for $n>2$ these two necessary conditions are also
sufficient for the existence of a Stein structure:

\begin{thm}[\cite{Eli90}]\label{thm:ex}
A smooth manifold $V$ of real dimension $2n>4$ admits a Stein
structure if and only if it admits an almost complex structure $J$ and
an exhausting Morse function $\phi$ without critical points of index
$>n$. More precisely, $J$ is homotopic through almost complex
structures to a complex structure $J'$ such that $\phi$ is
$J'$-convex.  
\end{thm}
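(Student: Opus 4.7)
The necessity direction is immediate: if $(V,J)$ is Stein with exhausting $J$-convex function, a $C^2$-small perturbation makes it Morse while preserving $J$-convexity, and Lemma \ref{lem:ind} then bounds every critical index by $n$.

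For the converse, the plan is an induction on the critical values of $\phi$, deforming $J$ through almost complex structures as we proceed. Set $V_c := \{\phi \leq c\}$. Below the lowest critical value $V_c$ is empty, so the base case is trivial. The inductive step is the following handle-attachment statement: if $J$ has already been deformed so that $\phi$ is $J$-convex on a neighborhood of $V_c$, and the next critical level contains a single critical point $p$ of index $k \leq n$, then one can further deform $J$ on a neighborhood of the descending disk of $p$ so that $\phi$ becomes $J$-convex past that critical value. The key geometric ingredient is that the regular level $\partial V_c$ inherits a natural contact structure $\xi = T(\partial V_c) \cap J(T(\partial V_c))$ of complex tangencies, and the standard totally-real-core model of a Morse handle of index $k \leq n$ admits a $J$-convex extension of $\phi$ by example (6), \emph{provided} the attaching sphere $S = \partial D^k \subset \partial V_c$ is isotropic with respect to $\xi$ (Legendrian when $k = n$).

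The decisive obstacle, and the reason for the hypothesis $n > 2$, is the isotropic positioning of $S$ inside $(\partial V_c, \xi)$. The required formal data -- an isotropic $(k-1)$-plane field along $S$ extending to the prescribed tangent distribution of the handle core -- is supplied canonically by the homotopy class of $J$ together with the Morse framing of $S$ inside the descending manifold of $p$. For subcritical handles $k < n$, Gromov's $h$-principle for isotropic embeddings converts this formal data into a genuine smooth isotopy of $S$ to an isotropic embedding in $\partial V_c$, and no dimensional restriction beyond $n \geq 2$ is needed. The critical case $k = n$ demands that $S^{n-1}$ be made Legendrian in the $(2n-1)$-dimensional contact manifold $\partial V_c$; the corresponding $h$-principle fails in contact $3$-manifolds, which is precisely the excluded case $n = 2$, but in ambient contact dimension $2n-1 \geq 5$ one can realize the desired Legendrian isotopy by a front-projection approximation argument, exploiting the extra transverse room unavailable in the $3$-dimensional case. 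Once $S$ sits in the correct position, the previously constructed $J$-convex structure on $V_c$ extends across the standard handle model, and iterating along all critical values of $\phi$ produces the sought-after complex structure $J'$, homotopic to $J$ through almost complex structures, for which $\phi$ is an exhausting $J'$-convex function.
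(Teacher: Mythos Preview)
Your overall inductive scheme matches the paper's, but two substantial ingredients are either missing or mischaracterized.

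First, the $J$-convex extension over a handle is not furnished by example~(6). The squared distance function from a totally real core gives $J$-convexity near the core, but what is required is a $J$-convex function on the full handle $H_\gamma$ that agrees with the given $\phi$ near $\partial H_\gamma$ and has a level set surrounding the core disk. This is Theorem~\ref{thm:model}, whose proof occupies all of Section~\ref{sec:surr}: one constructs an $i$-convex hypersurface of the right shape via Struwe's equation and several delicate interpolations, and then fills in the intermediate level sets using Corollary~\ref{cor:foliation}. A bare reference to~(6) does not supply this.

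Second, and more seriously, your treatment of the critical case $k=n$ is incorrect. The $1$-parametric $h$-principle for Legendrian embeddings fails in \emph{every} dimension, not only in contact $3$-manifolds; there is no ``front-projection approximation argument'' that produces the required Legendrian embedding in the correct formal class merely from extra transverse room. The paper's actual route (Proposition~\ref{prop:ex}) is: Gromov's $h$-principle for Legendrian \emph{immersions} yields a regular homotopy $g_t$ from $f|_{\partial D^n}$ to a Legendrian embedding $g_1$; by Whitney's theorem (which already needs $n>2$) the obstruction to deforming $g_t$ to an isotopy is the self-intersection index $I_\Gamma$ of the trace immersion; one then concatenates with a further Legendrian regular homotopy of index $-I_\Gamma$, produced by the stabilization construction over a compact domain $N\subset\R^{n-1}$ of prescribed Euler characteristic. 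The second use of $n>2$ lies exactly here: only for $n>2$ can $\chi(N)$ be made arbitrary. Your explanation places the dimensional restriction in the wrong spot and omits the mechanism (Whitney trick plus stabilization) that actually does the work.

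A smaller but genuine gap: you never explain how $J$ is made \emph{integrable} over each new handle. This is Proposition~\ref{prop:J-ext}, proved via the $h$-principle for totally real embeddings together with a surjectivity statement for relative homotopy groups of Stiefel manifolds, and it is logically prior to the $J$-convex extension step.
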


The idea of the proof is the following: Pick a sequence
$r_0<r_1<r_2<\dots$ of regular values of $\phi$ with $r_0<\min\,\phi$,
$r_i\to\infty$, and such that each interval $(r_i,r_{i+1})$ contains
at most one critical value of $\phi$. By Morse theory, each sublevel
set $W_i:=\{\phi\leq r_i\}$ is obtained from $W_{i-1}$ by attaching a
finite number of disjoint handles of index $\leq n$. Proceeding by
induction over $i$, suppose that on $W_{i-1}$, $J$ is already
integrable and $\phi$ is $J$-convex. Then for each $k\leq n$ we need
to 
\begin{enumerate}
\item extend $J$ to a complex structure over a $k$-handle, and
\item extend $\phi$ to a $J$-convex function over a $k$-handle. 
\end{enumerate}
The first step is based on $h$-principles and will be explained in
Section~\ref{sec:ex}. The second step requires the construction of
certain $J$-convex model functions on a standard handle and will be
explained in Section~\ref{sec:surr}.

\section{Constructions of $J$-convex functions}\label{sec:surr}

The goal of this section it to construct the $J$-convex model
functions needed for the proof of Theorem~\ref{thm:ex}. We begin with
some preparations. 

{\bf $J$-convex hypersurfaces. }
Consider a smooth hypersurface (of real codimension one) $\Sigma$ in a
complex manifold $(V,J)$. Each tangent space $T_p\Sigma\subset T_pV$,
$p\in\Sigma$, contains the unique maximal complex subspace
$\xi_p=T_p\Sigma\cap J(T_p\Sigma)\subset T_p\Sigma$. These subspaces
form a codimension one distribution $\xi\subset T\Sigma$, the 
{\em field of complex tangencies}. 
Suppose that $\Sigma$ is cooriented by a transverse vector field $\nu$
to $\Sigma$ in $V$ such that $J\nu$ is tangent to $\Sigma$.
The hyperplane field $\xi$ can be defined by a Pfaffian equation
$\{\alpha=0\}$, where the sign of the 1-form $\alpha$ is fixed by the
condition $\alpha(J\nu)>0$. The 2-form $\omega_\Sigma:=d\alpha|_\xi$,
called the {\em Levi form} of $\Sigma$, is then defined uniquely up to
multiplication by a positive function. The cooriented hypersurface
$\Sigma$ is called {\em $J$-convex} (or strictly Levi pseudoconvex) if
$\om_\Sigma(v,Jv)>0$ for each nonzero $v\in\xi$. 

\begin{problem}
Each regular level set of a $J$-convex function is $J$-convex (where
we always coorient level sets of a function by its gradient). 
Conversely, if $\phi:V\to\R$ is a smooth function without
critical points all of whose level sets are compact and $J$-convex,
then there exists a convex increasing function $f:\R\to\R$ such that
$f\circ\phi$ is $J$-convex. 
\end{problem}

Thus, up to composition with a convex increasing function, proper
$J$-convex functions are the same as {\em $J$-lc functions} (``lc''
stands for ``level convex''), i.e., functions that are $J$-convex near
the critical points and have compact $J$-convex level sets outside a
neighbourhood of the critical points. 

\begin{problem}\label{ex:complete}
Let $\phi:V\to\R$ be an exhausting $J$-convex function. Then for every
convex increasing function $f:\R\to\R$ with
$\lim_{y\to\infty}f'(y)=\infty$ the gradient vector field
$\nabla_{f\circ\phi}(f\circ\phi)$ is {\em complete}, i.e., its flow
exists for all times.  
\end{problem}
\medskip

{\bf Continuous $J$-convex functions. }
We will need the notion of $J$-convexity also for continuous
functions. To derive this, recall that $i$-convexity of a function
$\phi:U\to\R$ on an open subset $U\subset\C$ is equivalent to 
$\Delta\phi>0$. 

\begin{problem} 
A smooth function $\phi:U\to\R$ on an open subset $U\subset\C$
satisfies $\Delta\phi(z)\geq \eps>0$ at $z\in U$ if and only if 
it satisfies for each sufficiently small $r>0$ the {\em mean value
  inequality}  
\begin{equation} \label{eq:mean-value}
    \phi(z)+\frac{\eps r^2}{4} \leq
    \frac{1}{2\pi}\int_0^{2\pi}\phi(z+re^{i\theta})d\theta. 
\end{equation}
\end{problem}

Since inequality~\eqref{eq:mean-value} does not involve derivatives of
$\phi$, we can take it as the definition of $i$-convexity for a
continuous function $\phi:\C\supset U\to\R$, and hence via local
coordinates for a continuous function on a complex curve (note
however that the value $\eps$ depends on the local coordinate). 
Finally, we call a continuous function $\phi:V\to\R$ on a complex
manifold {\em $J$-convex} if its restriction to every embedded
complex curve $C\subset V$ is $J$-convex. With this definition, we
have  

\begin{lemma}\label{lem:max}
The maximum $\max(\phi,\psi)$ of two continuous $J$-convex functions
is again $J$-convex.  
\end{lemma}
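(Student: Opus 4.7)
The plan is to reduce the statement to a one-variable question on a complex curve, and then to exploit the integral characterization of $J$-convexity that was introduced precisely for arguments of this kind.

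First, I would invoke the definition: $\max(\phi,\psi)$ is $J$-convex iff for every embedded complex curve $C\subset V$ the restriction $\max(\phi,\psi)|_C$ is $J$-convex on $C$. Since pointwise maximum commutes with restriction, $\max(\phi,\psi)|_C=\max(\phi|_C,\psi|_C)$, so the problem reduces, via a holomorphic chart on $C$, to the following local claim on an open set $U\subset\C$: if $u,v:U\to\R$ are continuous and both satisfy the mean value inequality \eqref{eq:mean-value} with some $\eps>0$ for all sufficiently small $r$, then so does $\max(u,v)$.

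Second, I would prove this by a pointwise case analysis. Fix $z\in U$ and suppose, say, that $\max(u,v)(z)=u(z)$. Since $u\leq \max(u,v)$ pointwise, monotonicity of the integral yields
\[
\max(u,v)(z)+\frac{\eps r^2}{4} \;=\; u(z)+\frac{\eps r^2}{4} \;\leq\; \frac{1}{2\pi}\int_0^{2\pi} u(z+re^{i\theta})\,d\theta \;\leq\; \frac{1}{2\pi}\int_0^{2\pi}\max(u,v)(z+re^{i\theta})\,d\theta,
\]
which is the required inequality (after replacing $\eps$ by the minimum of the two constants if they differ). The case $\max(u,v)(z)=v(z)$ is symmetric, so $\max(u,v)$ satisfies \eqref{eq:mean-value} at every point.

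I do not anticipate any genuine obstacle here: the lemma is essentially a tautology once the right definition is in place. The whole point of formulating $J$-convexity via the integral inequality \eqref{eq:mean-value} rather than via the differential condition $\Delta\phi>0$ is precisely that integral inequalities, unlike derivatives, are preserved under pointwise maxima (which in general destroy smoothness). This is almost certainly the reason the continuous notion of $J$-convexity was introduced immediately before this lemma.
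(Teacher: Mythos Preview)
Your proof is correct and follows essentially the same route as the paper: reduce to a complex curve, then use the mean value inequality for whichever of $\phi,\psi$ realizes the maximum at $z$ together with monotonicity of the integral, taking $\min(\eps_\phi,\eps_\psi)$ as the new constant. The paper presents the two inequalities in parallel rather than as an explicit case split, but the content is identical.
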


\begin{proof}
After restriction to complex curves it suffices to consider the case
$\phi,\psi:\C\supset U\to\R$. Then the mean value inequalities for
$\phi$ and $\psi$,
\begin{align*}
    \phi(z)+\frac{\eps_\phi r^2}{4} \leq
    \frac{1}{2\pi}\int_0^{2\pi}\phi(z+re^{i\theta})d\theta \leq
    \frac{1}{2\pi}\int_0^{2\pi}\max(\phi,\psi)(z+re^{i\theta})d\theta,
    \cr 
    \psi(z)+\frac{\eps_\psi r^2}{4} \leq
    \frac{1}{2\pi}\int_0^{2\pi}\psi(z+re^{i\theta})d\theta \leq
    \frac{1}{2\pi}\int_0^{2\pi}\max(\phi,\psi)(z+re^{i\theta})d\theta
\end{align*}
combine to the mean value inequality for $\max(\phi,\psi)$,
$$
    \max(\phi,\psi)(z)+\frac{\min(\eps_\phi,\eps_\psi) r^2}{4} \leq
    \frac{1}{2\pi}\int_0^{2\pi}\max(\phi,\psi)(z+re^{i\theta})d\theta. 
$$
\end{proof}

{\bf Smoothing of $J$-convex functions. }
Continuous $J$-convex functions are useful for our purposes because of

\begin{prop}[Richberg~\cite{Ric68}]\label{prop:Richberg}
Every continuous $J$-convex function on a complex manifold can be
$C^0$-approximated by smooth $J$-convex functions. 
\end{prop}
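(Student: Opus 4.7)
The plan is to smooth locally in holomorphic charts by mollification and then patch the local smoothings using a smoothed-maximum construction together with Lemma~\ref{lem:max}.

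First, I would carry out the local smoothing. In a holomorphic chart $U\subset\C^n$, where $J=i$, continuous $J$-convexity of $\phi$ is exactly the mean value inequality~\eqref{eq:mean-value} on every complex affine line. For a nonnegative mollifier $\rho_r$ on $\C^n$ with $\int\rho_r=1$ supported in a ball of radius $r$, the convolution $\phi_r:=\phi*\rho_r$ inherits the mean value inequality by Fubini: the circular average of $\phi_r$ equals the convolution with $\rho_r$ of the circular average of $\phi$, and convolution with a nonnegative kernel preserves pointwise inequalities. Hence $\phi_r$ is smooth and $i$-convex on a slightly shrunken chart, with $\phi_r\to\phi$ uniformly on compact subsets as $r\to 0$.

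Second, I would build a smoothed maximum $M_\delta:\R^k\to\R$ for each $\delta>0$: a smooth symmetric convex function, non-decreasing in each argument (strictly so in the argument attaining the strict maximum), agreeing with $\max$ outside a $\delta$-neighbourhood of the diagonals $\{a_i=a_j\}$, and satisfying $M_\delta\geq\max$. For smooth $J$-convex $f_1,\dots,f_k$ the composition $M_\delta(f_1,\dots,f_k)$ is smooth and $J$-convex: $-dd^\C(M_\delta\circ F)$ decomposes into a first-order term $\sum_i(\partial_iM_\delta)\,\om_{f_i}$, non-negative by monotonicity of $M_\delta$ and $J$-convexity of the $f_i$, plus a Hessian-of-$M_\delta$ term, non-negative by convexity; strict positivity at each point is supplied by whichever $f_i$ achieves the strict maximum.

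Third, I would perform the global patching by induction. Take a locally finite countable cover $\{U_n\}$ of $V$ by relatively compact holomorphic charts with nested precompact refinements $V_n\Subset W_n\Subset U_n$, with $\{V_n\}$ still covering $V$, and a summable sequence $\eps_n>0$ of total mass less than the target approximation error. Inductively build continuous $J$-convex $\phi_n$ that are smooth on $V_1\cup\cdots\cup V_n$, agree with $\phi_{n-1}$ outside $W_n$, and satisfy $\|\phi_n-\phi_{n-1}\|_\infty<\eps_n$. At step $n$, mollify $\phi_{n-1}$ in the chart $U_n$ to obtain a smooth strictly $J$-convex $\tilde\phi_n$ very close to $\phi_{n-1}$ on $\overline{W_n}$; pick a smooth cut-off $\chi$ vanishing on $V_n$ and equal to $1$ near $\partial W_n$, and set $g_n:=\tilde\phi_n-c\chi+\alpha$ for small constants $c,\alpha>0$. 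For $c$ below the strict-$J$-convexity margin of $\tilde\phi_n$ the perturbation $-c\chi$ keeps $g_n$ strictly $J$-convex, and tuning $\alpha,c$ arranges $g_n>\phi_{n-1}$ on $V_n$ and $g_n<\phi_{n-1}$ near $\partial W_n$. Define $\phi_n:=M_\delta(g_n,\phi_{n-1})$ on $W_n$ for $\delta$ so small that the transition region $\{|g_n-\phi_{n-1}|<\delta\}$ lies inside $V_1\cup\cdots\cup V_{n-1}$ (where $\phi_{n-1}$ is smooth), and $\phi_n:=\phi_{n-1}$ outside $W_n$. Local finiteness makes the sequence $\phi_n$ stabilize locally, and the limit is the desired smooth $J$-convex $C^0$-approximation.

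The main obstacle is the parameter book-keeping in the inductive step. Since $\tilde\phi_n$ is close to $\phi_{n-1}$ only in a two-sided sense, no constant shift alone can force $g_n>\phi_{n-1}$ on $V_n$ while $g_n<\phi_{n-1}$ near $\partial W_n$; the cut-off $-c\chi$ breaks this symmetry, but then $c$ must not exceed the strict-$J$-convexity margin gained from mollification. Moreover, the cover and its refinements must be chosen so that the $\delta$-transition region of the smoothed maximum lies in the already-smoothed portion (so that both inputs to $M_\delta$ are smooth there); at the initial step one instead uses the plain maximum, which by Lemma~\ref{lem:max} is continuous $J$-convex and already smooth on $V_1$ where $g_1>\phi_0$ strictly. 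Arranging all these constraints consistently is the technical heart of Richberg's argument.
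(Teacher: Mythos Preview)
Your proposal is correct and follows the same approach as the paper: mollification in holomorphic charts (verifying $i$-convexity via the mean value inequality, exactly as the paper does) followed by a patching argument. The paper dismisses the global step with the single phrase ``the manifold case follows from this by a patching argument,'' whereas you spell out that patching via smoothed maxima and cut-offs---which is indeed Richberg's original scheme and the intended content of that phrase.
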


\begin{proof}
The proof is based on an explicit smoothing procedure for functions on
$\C^n$. Fix a smooth nonnegative function $\rho:\C^n\to\R$ with
support in the unit ball and $\int_{\C^n}\rho=1$. For $\delta>0$ set
$\rho_\delta(x):=\delta^{-2n}\rho(x/\delta)$. 
For a continuous function $\phi:\C^n\to\R$ define the 
``mollified'' function $\phi_\delta:\C^n\to\R$, 
\begin{equation}\label{eq:moll}
   \phi_\delta(x):=\int_{\C^n}\phi(x-y)\rho_\delta(y)d^{2n}y =
   \int_{\C^n}\phi(y)\rho_\delta(x-y)d^{2n}y.
\end{equation}
The last expression shows that the functions $\phi_\delta$ are smooth
for every $\delta>0$, and the first expression shows that
$\phi_\delta\to\phi$ as $\delta\to 0$ uniformly on compact
subsets. Moreover, if $\phi$ is $i$-convex, then the mean value
inequality for $\phi$ yields for all $x,w\in\C$ with $|w|$
sufficiently small
\begin{align*}
   \phi_\delta(x)+\frac{1}{4}\eps |w|^2 &=
   \int_{\C^n}\Bigl(\phi(x-y)+\frac{1}{4}\eps |w|^2\Bigr) \rho_\delta(y)
   d^{2n}y \cr
   &\leq \int_{\C^n}\frac{1}{2\pi}\int_0^{2\pi}\phi(x-y+we^{i\theta})d\theta
   \rho_\delta(y)d^{2n}y \cr
   &= \frac{1}{2\pi}\int_0^{2\pi}\phi_\delta(x+we^{i\theta})d\theta,
\end{align*}
so $\phi_\delta$ is $i$-convex. This proves the proposition on $\C^n$. 
The manifold case follows from this by a patching argument. 
\end{proof}

We will need four corollaries of Proposition~\ref{prop:Richberg}. The
first one is just combining it with Lemma~\ref{lem:max}: 

\begin{cor}[maximum construction for functions]\label{cor:max}
The maximum $\max(\phi,\psi)$ of two smooth $J$-convex functions
can be $C^0$-approximated by smooth $J$-convex functions. $\hfill\Box$  
\end{cor}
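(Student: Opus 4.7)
The statement is essentially a one-line consequence of the two results developed immediately beforehand, so my plan is to simply combine them. First I would observe that smooth $J$-convexity implies continuous $J$-convexity in the sense of the mean value inequality~\eqref{eq:mean-value}: this is precisely the content of the problem preceding Lemma~\ref{lem:max}, which identifies $\Delta\phi\geq\eps>0$ on each embedded complex curve with the mean value inequality. So $\phi$ and $\psi$, being smooth $J$-convex functions, qualify as continuous $J$-convex functions.

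Next I would apply Lemma~\ref{lem:max} to conclude that $\max(\phi,\psi)$ is a continuous $J$-convex function. Note that $\max(\phi,\psi)$ is in general only continuous, not smooth, along the coincidence locus $\{\phi=\psi\}$, so we genuinely need the continuous version of $J$-convexity here rather than a pointwise derivative test.

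Finally, I would invoke Richberg's Proposition~\ref{prop:Richberg} directly to $C^0$-approximate the continuous $J$-convex function $\max(\phi,\psi)$ by smooth $J$-convex functions. This finishes the proof. There is no real obstacle to overcome, since both ingredients are already in place; the only thing to be careful about is that the approximation is merely $C^0$ and not $C^1$, which is consistent with the fact that $\max(\phi,\psi)$ has corners along $\{\phi=\psi\}$ and cannot in general be approximated in a stronger topology by a smooth function.
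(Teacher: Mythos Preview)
Your proposal is correct and follows exactly the paper's approach: the paper states that this corollary is obtained ``just by combining'' Proposition~\ref{prop:Richberg} with Lemma~\ref{lem:max}, which is precisely the two-step argument you give. Your additional remark that smooth $J$-convexity implies continuous $J$-convexity via the mean value inequality is the implicit first step the paper takes for granted.
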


We will denote a smooth approximation of $\max(\phi,\psi)$ by
$\smoothmax(\phi,\psi)$. This is a slight abuse of notation because
such an approximation is not unique; it is somewhat justified by the
fact that the approximation can be chosen smoothly in families. 

\begin{cor}[interpolation near a totally real submanifold]
\label{cor:totally-real}
Let $L$ be a compact totally real submanifold of a complex manifold
$(V,J)$. Let $\phi,\psi:V\to\R$ be two smooth $J$-convex functions
such that $\phi(x)=\psi(x)$ and $d\phi(x)=d\psi(x)$ for all $x\in
L$. Then, given any neighborhood $U$ of $L$, there exists a smooth
$J$-convex function $\vartheta:V\to\R$ which coincides with $\phi$
outside $U$ and with $\psi$ in a smaller neighborhood of $L$.  
\end{cor}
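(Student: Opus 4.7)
The plan is to apply the smooth-max construction of Corollary~\ref{cor:max} to $\psi$ and a carefully perturbed version of $\phi$, exploiting the $J$-convexity of the squared distance function $d := \dist_L^2$ supplied by~(6).

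First I would set up the geometry. Because $L$ is compact and totally real, (6) produces a smooth function $d = \dist_L^2$ (in some Hermitian metric) that is $J$-convex on an open tubular neighborhood of $L$. After shrinking, I may assume a compact tubular neighborhood $W$ with $\overline W \subset U$ on which $d$ is $J$-convex. Since $\phi - \psi$ vanishes together with its first differential along $L$, Taylor's theorem yields a constant $C > 0$ with
\[
   |\phi - \psi| \leq C\,d \quad\text{on } \overline W.
\]

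Next I would build a smooth $J$-convex auxiliary function $F$ on $V$ that equals $\phi$ outside $W$, lies strictly below $\psi$ on $L$, and strictly dominates $\psi$ on an intermediate annulus. Pick $\epsilon > 0$ small and $\lambda > C$, and a smooth non-decreasing profile $h : [0,\infty) \to [0,\infty)$ with $h(t) = t$ for $t$ near $0$ and $h$ smoothly flattening to the constant value $\epsilon/\lambda$ beyond a threshold inside $[0, d_0)$, where $d_0$ is the outer radius of $W$. Set $F := \phi + \lambda h(d) - \epsilon$ on $W$ and $F := \phi$ outside $W$. By the matching this is globally smooth and agrees with $\phi$ outside a compact subset of $W$. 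On the linear part of $h$ the summand $\lambda h(d) = \lambda d$ is $J$-convex by~(4); on the concave flattening part, the negative Levi contribution $\lambda h''(d)\, dd \wedge d^{\C}d$ can be absorbed by the strict $J$-convexity of $\phi$ on the compact set $\overline W$, provided the flattening transition is chosen sufficiently gradual.

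Now apply the smooth-max construction. By design $F = \phi - \epsilon < \psi$ on $L$ by a margin of $\epsilon$, and by the bound $|\phi - \psi| \leq Cd$ together with $\lambda > C$, we have $F > \psi$ strictly on the annulus $\{d > \epsilon/(\lambda - C)\}$ in the linear part of $h$. Corollary~\ref{cor:max} then gives a smooth $J$-convex function $\vartheta := \smoothmax(F,\psi)$, and with smoothing scale much less than $\epsilon$ this function equals $\psi$ exactly on the genuine smaller neighborhood $\{d < \epsilon/(\lambda + C)\}$ of $L$ and equals $F$ on the region where $F$ dominates $\psi$ by a definite margin. Since $F = \phi$ outside the modification region, this gives $\vartheta = \phi$ there, hence outside $U$.

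The main obstacle is the quantitative balance required in the construction. The profile $h$ must be chosen so that $F > \psi$ persists throughout the flat portion (where $F = \phi$) and smoothly matches $\phi$ at the boundary of the modification region, while the concave flattening region of $h$ remains wide enough that the negative contribution $\lambda h''(d)$ is dominated by the strict positive lower bound of $\omega_\phi$ on $\overline W$. This is where compactness of $L$ is essential, as it gives uniform positivity of $\omega_\phi$ and $\omega_d$ on $\overline W$ and a uniform Taylor constant $C$; a careful choice of the parameters $\epsilon$, $\lambda$, and the transition widths of $h$ then makes the construction go through.
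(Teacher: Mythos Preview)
Your overall strategy---bump $\phi$ up by a multiple of $\rho=\dist_L^2$, push it below $\psi$ right along $L$, then take $\smoothmax$ with $\psi$---is exactly the paper's approach. But the execution has a genuine gap, and it is precisely the one you flag as ``the main obstacle'' without actually resolving.

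You take $\vartheta=\smoothmax(F,\psi)$ \emph{globally}. For this to equal $\phi$ outside $U$ you need $F\geq\psi$ (with a margin) wherever $F=\phi$, i.e.~you need $\phi>\psi$ on the flat portion of $h$ and outside $W$. Nothing in the hypotheses gives you this: $\phi$ and $\psi$ agree only to first order along $L$, so on the flat shell $\phi-\psi$ can perfectly well be negative. No choice of $h$, $\epsilon$, or $\lambda$ can fix that, because on the flat portion you have deliberately arranged $F=\phi+\lambda\cdot(\epsilon/\lambda)-\epsilon=\phi$, and $\phi$ is not a parameter. In fact your own numerics already show the tension: for $h$ to be nondecreasing with $h(t)=t$ near $0$ and flat value $\epsilon/\lambda$, the linear regime must end by $t\approx\epsilon/\lambda$, which is \emph{smaller} than the threshold $\epsilon/(\lambda-C)$ at which your inequality $F>\psi$ first kicks in---so the ``annulus in the linear part where $F>\psi$'' is empty.

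The paper avoids this by decoupling the two roles you are asking $h$ and $-\epsilon$ to play simultaneously. First build $\bar\phi=\phi+f(\rho)$ with $f(\rho)=a\rho$ on an inner collar $\{\rho<\delta\}$ and $f\equiv 0$ outside $U$; here $a>C$ guarantees $\bar\phi>\psi$ on all of $\{0<\rho<\delta\}$. Only \emph{then} perturb $\bar\phi$ to $\widehat\phi$ by subtracting a small bump supported well inside $\{\rho<\delta\}$, so that $\widehat\phi<\psi$ near $L$ but still $\widehat\phi=\phi+a\rho>\psi$ near $\{\rho=\delta\}$. Finally, take $\smoothmax(\psi,\widehat\phi)$ \emph{only on} $\{\rho<\delta\}$ and set $\vartheta=\widehat\phi$ outside; the two pieces agree because $\widehat\phi>\psi$ near $\{\rho=\delta\}$. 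This way the transition back to $\phi$ (via $f$) happens in a region where you have already exited the $\smoothmax$, so you never need $\phi>\psi$ there.
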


\begin{proof}
For the construction, see Figure~\ref{fig:totally-real}. 
\begin{figure}
\centering
\includegraphics{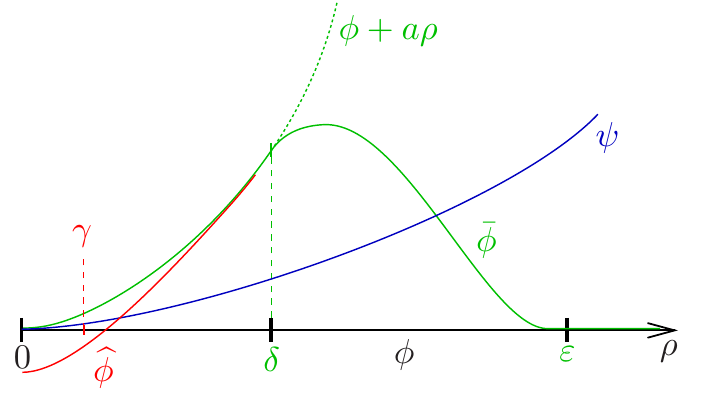}
\caption{Construction of the function $\vartheta$.}
\label{fig:totally-real}
\end{figure}
Shrink $U$ so that $\rho:=\dist_L^2:U\to\R$ is smooth and $J$-convex
and $U=\{\rho<\eps\}$.  
Since $\phi$ and $\psi$ agree to first order along $L$, we find an
$a>0$ such that $\phi+a\rho>\psi$ on $U\setminus L$. An explicit
computation shows that we can find a $J$-convex function $\bar\phi =
\phi+f(\rho)$ which agrees with $\phi$ outside $U$ and with
$\phi+a\rho$ on $\{\rho<\delta\}$ for some $\delta<\eps$. Perturb
$\bar\phi$ inside $\{\rho<\delta\}$ to a $J$-convex function $\wh\phi$
with $\wh\phi<\psi$ near $L$. Then the desired function $\vartheta$ is
given by $\smoothmax(\psi,\wh\phi)$ on $\{\rho<\delta\}$, and $\wh\phi$
outside.  
\end{proof}

\begin{cor}[minimum construction for hypersurfaces]\label{cor:min}
Let $\Sigma,\Sigma'$ be two compact $J$-convex hypersurfaces in a
complex manifold $(V=M\times\R,J)$ that are given as graphs of smooth 
functions $f,g:M\to\R$ and cooriented from below. Then there exists a
$C^0$-close smooth approximation of $\min(f,g)$
whose graph $\Sigma''$ is $J$-convex. 
\end{cor}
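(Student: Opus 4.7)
The plan is to reduce the minimum construction for hypersurfaces to the maximum construction for functions (Corollary~\ref{cor:max}). The key observation is that if $\phi_1,\phi_2$ are smooth $J$-convex functions defined on a common neighbourhood of $\Sigma\cup\Sigma'$ in $V$ with $\phi_i^{-1}(0)=\Sigma$ (respectively $\Sigma'$), positive above and negative below the respective hypersurface, then
$$
   \{(x,t)\in M\times\R : t\leq\min(f,g)(x)\} = \{\max(\phi_1,\phi_2)\leq 0\},
$$
so that the graph of $\min(f,g)$ coincides with the zero set of $\max(\phi_1,\phi_2)$. Smoothing the right-hand side via Corollary~\ref{cor:max} will then produce the desired hypersurface $\Sigma''$.

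The first step is to produce the $J$-convex defining functions $\phi_1,\phi_2$. Since $\Sigma$ is a compact $J$-convex hypersurface, pushing $\Sigma$ slightly in a direction transverse to $\xi$ (for instance along the flow of $J\nu$) produces a local foliation by compact $J$-convex hypersurfaces; composing a smooth parameter of this foliation with a sufficiently convex increasing function (via the problem immediately before Corollary~\ref{cor:max}) yields a smooth $J$-convex function $\phi_1$ on a neighbourhood of $\Sigma$ with $\phi_1^{-1}(0)=\Sigma$, $\partial_t\phi_1>0$, and the correct sign convention. The analogous construction applies to $\Sigma'$. Next, I apply Corollary~\ref{cor:max} in a neighbourhood of the coincidence locus $\{f=g\}$ (and simply set $\Phi=\phi_i$ wherever one $\phi_i$ strictly dominates the other) to obtain a smooth $J$-convex function $\Phi=\smoothmax(\phi_1,\phi_2)$ which is $C^0$-close to $\max(\phi_1,\phi_2)$. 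Its zero level set $\Sigma'':=\{\Phi=0\}$ is then $C^0$-close to the graph of $\min(f,g)$, and is a smooth $J$-convex hypersurface as soon as $0$ is a regular value of $\Phi$.

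The main obstacle is to ensure that $\Sigma''$ is still a graph over $M$, i.e., that $\partial_t\Phi>0$ along $\Sigma''$, so that the implicit function theorem represents it as the graph of a smooth function $h:M\to\R$. The uniform strict positivity $\partial_t\phi_i>c>0$ in the region of interest transfers to $\max(\phi_1,\phi_2)$ in the a.e.\ sense, and since the Richberg smoothing underlying Proposition~\ref{prop:Richberg} and Corollary~\ref{cor:max} is a convolution with a nonnegative mollifier, one obtains $\partial_t\Phi\geq c/2$ pointwise once the smoothing parameter is sufficiently small. This yields the desired function $h:M\to\R$ whose graph $\Sigma''$ is $J$-convex and $C^0$-close to $\min(f,g)$.
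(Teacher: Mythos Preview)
Your overall strategy is correct and coincides with the paper's: express the graph of $\min(f,g)$ as the zero set of the maximum of two $J$-convex defining functions, then invoke Corollary~\ref{cor:max}. The paper, however, executes this more cleanly. Rather than producing $J$-convex defining functions by flowing $\Sigma$ and $\Sigma'$ along a transverse vector field and then patching, it simply writes down the explicit global functions $\phi(x,y):=y-f(x)$ and $\psi(x,y):=y-g(x)$. These are not $J$-convex themselves, but their zero level sets are $\Sigma$ and $\Sigma'$, so by the problem preceding Lemma~\ref{lem:max} one can choose a convex increasing $h:\R\to\R$ with $h(0)=0$ making $h\circ\phi$ and $h\circ\psi$ $J$-convex near $\Sigma$ and $\Sigma'$; then $\Sigma''$ is just the zero set of $\smoothmax(h\circ\phi,h\circ\psi)$. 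This sidesteps your domain and patching worries entirely, and the graph property you spend a paragraph on comes for free: $\partial_y(h\circ\phi)=h'(y-f(x))>0$ everywhere, and similarly for $\psi$, so the same holds for their smooth maximum without any appeal to properties of the mollifier.

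One small slip in your version: you propose flowing along $J\nu$ to obtain the local foliation, but by the coorientation convention set up earlier in the paper, $J\nu$ is \emph{tangent} to $\Sigma$ (this is exactly the normalization $\alpha(J\nu)>0$), so that flow preserves $\Sigma$ rather than displacing it. You would need to flow along $\nu$ or $\partial_t$ instead.
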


\begin{proof}
The functions $\phi(x,y):=y-f(x)$ and $\psi(x,y):=y-g(x)$ have
$J$-convex zero sets $\Sigma=\phi^{-1}(0)$ and
$\Sigma'=\psi^{-1}(0)$. Note that the zero set of
$\max(\phi,\psi)=y-\min(f,g)(x)$ is the graph of the function
$\min(f,g)$. Now pick a convex increasing function $h:\R\to\R$ with
$h(0)=0$ such that $h\circ\phi$ and $h\circ\psi$ are $J$-convex near
$\Sigma$ resp.~$\Sigma'$, and define $\Sigma''$ as the zero set of
$\smoothmax(h\circ\phi,h\circ\psi)$. 
\end{proof}

\begin{cor}[from families of hypersurfaces to foliations]
\label{cor:foliation}
Let $(M\times[0,1],J)$ be a compact complex manifold. Suppose there
exists a smooth family of $J$-convex graphs (cooriented from below)
$\Sigma_\lambda=\{y=f_\lambda(x)\}$, $\lambda\in[0,1]$, with
$\Sigma_0=M\times\{0\}$ and $\Sigma_1=M\times\{1\}$. Then there exists
a smooth {\em foliation} of $M\times[0,1]$ by $J$-convex graphs
$\wt\Sigma_\lambda=\{y=\wt f_\lambda(x)\}$ $\lambda\in[0,1]$, with
$\wt\Sigma_0=M\times\{0\}$ and $\wt\Sigma_1=M\times\{1\}$.
\end{cor}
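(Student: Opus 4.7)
The plan is to construct the foliation $\tilde\Sigma_\lambda$ by rearranging the given non-monotone family into a strictly monotone one, using the minimum construction (Corollary~\ref{cor:min}) together with convex combinations of $J$-convex defining functions.

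First I would pass to a very fine partition $0=\lambda_0<\lambda_1<\cdots<\lambda_N=1$. By smoothness of the family, for $N$ large the consecutive graphs $\Sigma_{\lambda_i}$ are $C^2$-close. I would then construct inductively a strictly monotone discrete family $0=h_0<h_1<\cdots<h_N=1$ of $J$-convex graphs, each a $C^0$-small perturbation of $f_{\lambda_i}$. The inductive step uses Corollary~\ref{cor:min}: given $h_{i-1}$, take $h_i$ to be the smoothed minimum of $f_{\lambda_i}$ with a small upward perturbation of either $h_{i-1}$ or $f_{\lambda_i}$ (such perturbations exist as $J$-convex graphs by $C^2$-openness of $J$-convexity), arranged so that $h_i>h_{i-1}$ and $h_i$ remains close to $f_{\lambda_i}$.

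Next, for each consecutive pair $h_{i-1}<h_i$ I would fill in the strip $\{h_{i-1}\leq y\leq h_i\}$ by a smooth monotone subfamily of $J$-convex graphs. For this I would use convex combinations of defining functions: as in the proof of Corollary~\ref{cor:min}, choose a convex increasing function $h:\R\to\R$ such that $\phi_{i-1}(x,y):=h(y-h_{i-1}(x))$ and $\phi_i(x,y):=h(y-h_i(x))$ are simultaneously $J$-convex on a common neighbourhood of the strip (possible since $h_{i-1},h_i$ are $C^0$-close). Because $-dd^\C$ is $\R$-linear and positive-definiteness is preserved under convex combinations, each $(1-t)\phi_{i-1}+t\phi_i$ is $J$-convex on the strip. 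An implicit function computation shows that the zero sets $\{(1-t)\phi_{i-1}+t\phi_i=0\}$ form a monotone family of $J$-convex graphs $h^t_i$ sweeping from $h_{i-1}$ to $h_i$ as $t$ moves from $0$ to $1$: indeed $\partial_y$ of the combination is positive (both $h'$-terms are positive), while $\partial_t$ is negative because $h$ is increasing and $h_{i-1}<h_i$, so implicit differentiation gives $\partial y/\partial t>0$ on the level set.

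Concatenating these local foliations over all strips and smoothing at the join loci $y=h_i(x)$ by Richberg's Proposition~\ref{prop:Richberg} (applied to the resulting continuous $J$-convex function whose level sets are the $\tilde\Sigma_\lambda$) yields the desired smooth foliation. The main obstacle is the inductive construction of the ordered discrete family in the first step: since Corollary~\ref{cor:min} only produces graphs \emph{below} its inputs, forcing strict monotonicity $h_i>h_{i-1}$ while keeping $h_i$ close to $f_{\lambda_i}$ requires careful $C^0$-budgeting. One balances this by choosing $N$ large enough that the $C^0$-distance between consecutive $f_{\lambda_i}$ is much smaller than any a priori upward adjustments needed to correct the reversals in the original family, so that the total accumulated perturbation fits within the height $1$ and is compatible with the boundary conditions $h_0=0$ and $h_N=1$.
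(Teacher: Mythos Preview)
Your interpolation in step~2 is sound: convex combinations of $J$-convex defining functions are $J$-convex, and the implicit-function argument does give a strictly monotone family of $J$-convex graphs filling a thin strip. The real problem is step~1. The smoothed minimum of Corollary~\ref{cor:min} always produces a graph lying \emph{below} both inputs, so from $h_{i-1}$ and $f_{\lambda_i}$ (or any small perturbations of them) you only get something close to $\min(h_{i-1},f_{\lambda_i})$. If the original family genuinely reverses---say $f_{\lambda_i}(x_0)<h_{i-1}(x_0)-\delta$ at some point $x_0$---then no minimum-type construction will yield $h_i(x_0)>h_{i-1}(x_0)$. Your budgeting argument does not help: taking $N$ large makes \emph{consecutive} $f_{\lambda_i}$ close to each other, but the depth of a reversal at a fixed point, namely $\max_{\lambda}\bigl(f_\lambda(x_0)-\min_{\mu\geq\lambda}f_\mu(x_0)\bigr)$, is independent of $N$ and can be of order~$1$. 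So the inductive step fails precisely when the family is non-monotone, which is the only case of interest.

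What is missing is the observation that the running minimum $\bar f_\lambda:=\min_{\mu\geq\lambda}f_\mu$ is \emph{automatically} nondecreasing in $\lambda$, has $\bar f_1=f_1\equiv 1$, and for each fixed $\lambda$ is a minimum over a compact family of $J$-convex graphs. A family version of Corollary~\ref{cor:min} then approximates the $\bar f_\lambda$ by a smooth, weakly monotone family $g_\lambda$ of $J$-convex graphs; from there either your step~2 or a simple stretch in the $y$-direction yields the foliation. This is the route the paper takes.
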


\begin{proof}
By a family version of Corollary~\ref{cor:min}, 
the continuous functions $\bar f_\lambda:=\min_{\mu\geq\lambda}f_\mu$ can
be $C^0$-approximated by smooth functions $g_\lambda:M\to[0,1]$ whose
graphs $\{y=g_\lambda(x)\}$ are $J$-convex. Since 
$\bar f_\lambda\leq \bar f_{\lambda'}$ for $\lambda\leq\lambda'$, this
can be done in such a way that   
$g_\lambda\leq g_{\lambda'}$ for $\lambda\leq\lambda'$. So the graphs
of $g_\lambda$ almost form a foliation, and stretching them slightly
in the $y$-direction yields the desired foliation.  
\end{proof}

{\bf Open question. }
{\it Does an analogue of Proposition~\ref{prop:Richberg}, or at least of
Corollary~\ref{cor:max}, hold for non-integrable $J$? If this were
true, then a lot of the theory in these notes would work in the
non-integrable case.} 
\medskip

{\bf $J$-convex model functions. }
Let us fix integers $1\leq k\leq n$. 
Consider $\C^n$ with complex coordinates $z_j=x_j+iy_j$,
$j=1,\dots,n$, and set
$$
   R:=\sqrt{\sum_{j=1}^k x_j^2}, \qquad
   r:=\sqrt{\sum_{j=k+1}^n x_j^2+\sum_{j=1}^n y_j^2}.
$$ 
Fix some $a>1$ and define the {\em standard $i$-convex function}
$$
   \Psi_\st(r,R) := ar^2-R^2. 
$$
For small $\gamma>0$, we will use 
$$
   H_\gamma:=\{r\leq\gamma,\;R\leq 1+\gamma\}
$$
as a model for a complex $k$-handle. Its {\em core disk} is the
totally real $k$-disk $\{r=0,\;R\leq 1+\gamma\}$ and it will be
attached to the boundary of a Stein domain along the set
$\{r\leq\gamma,\;R=1+\gamma\}$. The following theorem will allow us to
extend a $J$-convex function over the handle. 

\begin{thm}\label{thm:model}
For each $0<\gamma<1<a$ there exists an $i$-lc function $\Psi(r,R)$ on
$H_\gamma$ with the following properties (see Figure~\ref{fig:model2}):
\begin{enumerate}
\item $\Psi=\Psi_\st$ near $\p H_\gamma$;
\item $\Psi$ has a unique index $k$ critical point at the origin;  
\item the level set $\Sigma=\{\Psi=-1\}$ {\em surrounds} the core disk
  in the sense that $\{r=0,\;R\leq 1+\gamma\}\subset\{\Psi<-1\}$. 
\end{enumerate}
\end{thm}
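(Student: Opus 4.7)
The construction deforms $\Psi_{\st}(r,R)=ar^2-R^2$ so as to push its values below $-1$ on the core disk, while keeping $\Psi=\Psi_{\st}$ near $\partial H_\gamma$ and preserving the index-$k$ critical point at the origin. Since $\Psi_{\st}$ has critical value $0$ at the origin and its $-1$-level set cuts the core disk at $R=1$, both aspects need fixing.

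My plan is a three-region construction. Near the origin I set $\Psi:=\Psi_{\st}-D$ for some $D>1$: this is $i$-convex with a unique index-$k$ critical point of value $-D<-1$, and its $-1$-level set is the shifted hyperboloid $\{\Psi_{\st}=D-1\}$, which lies entirely in $\{r\ge\sqrt{(D-1)/a}\}$ and so misses the core disk. Near $\partial H_\gamma$ I set $\Psi:=\Psi_{\st}$, directly giving~(i). In the intermediate annular region I construct $\Psi$ from its level sets. The key step is to produce a one-parameter family of $J$-convex hypersurfaces $\Sigma_\lambda$ interpolating between leaves of $\Psi_{\st}-D$ and leaves of $\Psi_{\st}$, such that the distinguished leaf at value $-1$ surrounds the core disk. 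I build such a family by combining the hyperboloid leaves $\{\Psi_{\st}=\mathrm{const}\}$ with the cylinder $\{r=r_0\}$ (which is $J$-convex because $r^2$ is $i$-convex by the quadratic criterion from the Problem in Section~\ref{sec:conv}), merging them via a family version of Corollary~\ref{cor:min}; the resulting hypersurface agrees with the hyperboloid outside and with the cylinder inside, so it wraps around the core disk. Corollary~\ref{cor:foliation} smooths this family into a foliation of the annular region by $J$-convex hypersurfaces, and by the Problem characterizing smooth functions with compact $J$-convex level sets as $J$-convex up to composition with a convex increasing function, this foliation is the level-set structure of a $J$-lc function; gluing the three pieces (using Corollary~\ref{cor:totally-real} near the core disk if needed) yields $\Psi$.

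The main obstacle is the construction of the interpolating family. Merging the hyperboloid with the cylinder---graphs over different bases in any natural parametrization---requires an adapted version of the minimum construction, and one must verify that $J$-convexity is preserved through the smoothing and that the family is monotonic in $\lambda$ so as not to introduce critical leaves in the foliation (which would create extra critical points of $\Psi$ and violate~(ii)). This is especially delicate when $a$ is close to $1$, since then $\Psi_{\st}$'s $i$-convexity is only marginal and the permissible modifications are narrow.
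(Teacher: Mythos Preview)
Your overall architecture---construct the distinguished level set $\Sigma$ first, then connect it to neighboring level sets by a family and invoke Corollary~\ref{cor:foliation}---matches the paper's Step~4. But the heart of the theorem is the construction of $\Sigma$ itself, and here your proposal has a genuine gap.

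You propose to obtain $\Sigma$ by merging the cylinder $\{r=r_0\}$ with the hyperboloid $\{\Psi_{\st}=-1\}$ via Corollary~\ref{cor:min}. This does not work. Corollary~\ref{cor:min} applies to two hypersurfaces that are simultaneously graphs over a common base $M$ in a product $M\times\R$, cooriented from below, and it produces the graph of the \emph{minimum}. If you parametrize both as graphs in the $r$-direction (cylinder: $r=r_0$; hyperboloid: $r=\sqrt{(R^2-1)/a}$ for $R\geq 1$), the coorientation forced by $i$-convexity is toward increasing $r$ for both, so Corollary~\ref{cor:min} hands you the \emph{inner} envelope---hyperboloid for small $R$, cylinder for large $R$---which is the opposite of what you need. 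The desired hypersurface (cylinder near the core disk, hyperboloid near $\partial H_\gamma$) would be the \emph{outer} envelope, which corresponds to the minimum of the defining $i$-convex functions, and that operation does not preserve $i$-convexity. You acknowledge that ``an adapted version of the minimum construction'' is needed but give no indication of how to produce one; this is precisely the hard analytic content of the theorem.

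The paper's proof shows why this step is genuinely delicate. It writes $\Sigma$ as a graph $R=\phi(r)$ that becomes vertical at $r=\delta$, derives a sufficient second-order differential inequality for $i$-convexity of such graphs, and produces $\phi$ by explicitly solving an auxiliary ODE (Struwe's equation $\phi''+\phi'^3/(2r)=0$). Even after this, the interpolation from the Struwe profile to the standard hyperboloid profile $S(r)=\sqrt{1+ar^2}$ cannot be done by a single maximum construction---the paper notes that the linear interpolant $L$ loses $i$-convexity before meeting $S$---and requires passing through an intermediate quadratic profile with careful control of the full Levi form. Your three-region sketch, with $\Psi_{\st}-D$ near the origin and $\Psi_{\st}$ near the boundary, also runs into the obstacle that the leaves of $\Psi_{\st}$ with value near $0$ run from a neighbourhood of the origin all the way to $\partial H_\gamma$, so no relabeling of the $\Psi_{\st}$-foliation can realize both boundary conditions; the intermediate foliation must consist of genuinely new hypersurfaces, and you have not constructed them.
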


\begin{figure}
\centering
\includegraphics{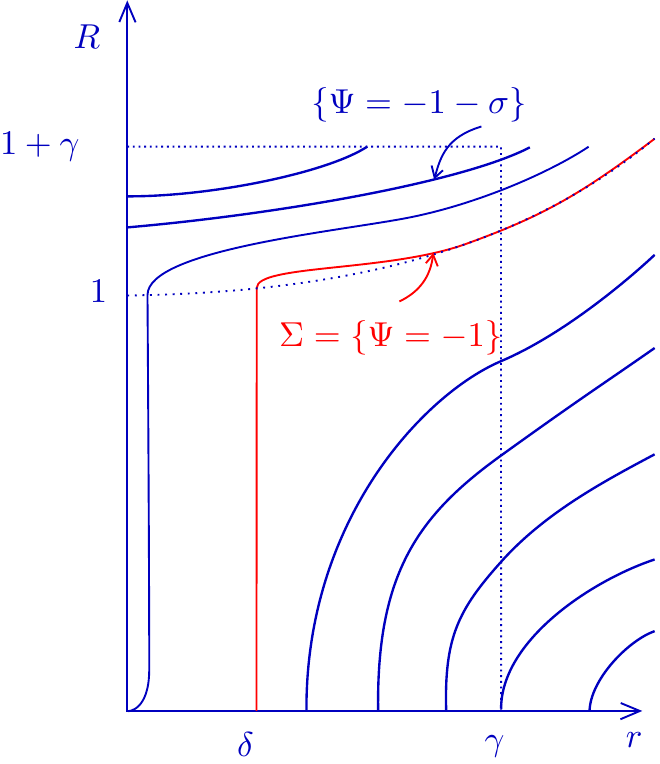}
\caption{The function $\Psi_1$.}
\label{fig:model2}
\end{figure}

\begin{proof}
{\bf Step 1. }
The first task is the construction of the hypersurface $\Sigma$. Let
us write $\Sigma$ as a graph $R=\phi(r)$, which we allow to become
vertical at $r=\delta$. One can work out the condition for
$i$-convexity of $\Sigma$ (cooriented from above), which becomes a
rather complicated system of second order differential inequalities
for $\phi$. However, it turns out that if $\phi>0$, $\phi'>0$, and
$\phi''\leq0$, the following simpler condition is {\em sufficient} for 
$i$-convexity: 
\begin{equation}\label{eq:shape-above}
   \phi''+\frac{\phi'^3}{r}-\frac{1}{\phi}(1+\phi'^2)>0.
\end{equation}

{\bf Step 2. }
To construct solutions of~\eqref{eq:shape-above}, we follow a
suggestion by M.~Struwe. We will find the function $\phi$ as a
solution of {\em Struwe's equation}
\begin{equation}\label{eq:Struwe}
   \phi''+\frac{\phi'^3}{2r}=0,
\end{equation}
with $\phi'>0$ and hence $\phi''<0$. Then~\eqref{eq:shape-above}
reduces to 
\begin{equation}\label{eq:Struwe-ineq}
   \frac{\phi'^3}{2r} - \frac{1}{\phi}(1+\phi'^2) > 0.
\end{equation}
Now Struwe's equation can be solved explicitly: 
It is equivalent to
$$
   \Bigl(\frac{1}{\phi'^2}\Bigr)'=-\frac{2\phi''}{\phi'^3}=\frac{1}{r},
$$
thus $1/\phi'^2=\ln(r/\delta)$ for some constant $\delta>0$, or
equivalently, $\phi'(r)=1/\sqrt{\ln(r/\delta)}$. By integration, this
yields a solution $\phi(r)$ for $r\geq\delta$ which is strictly
increasing and concave and satisfies $\phi'(\delta)=+\infty$. Choosing
the remaining integration constant appropriately, we find a solution
$\phi:[\delta,K\delta]\to\R$ which satisfies~\eqref{eq:Struwe-ineq}
and looks as shown in Figure~\ref{fig:Struwe}. Here $d>0$ can be
arbitrarily chosen and $K\delta$ can be made arbitrarily small. 
\begin{figure}
\centering
\includegraphics{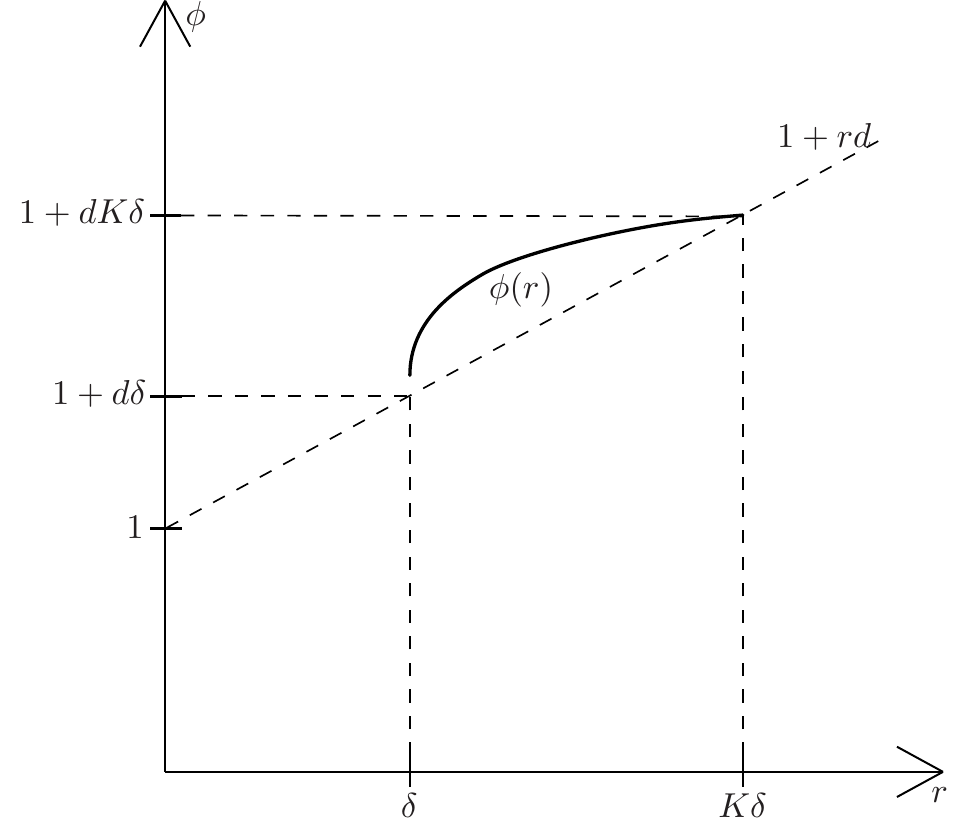}
\caption{A solution of Struwe's differential equation.}
\label{fig:Struwe}
\end{figure}

{\bf Step 3. }
Smoothing the maximum of the function $\phi$ from Step 2 and the
linear function $L(r)=1+dr$ yields an $i$-convex hypersurface which
surrounds the core disk and agrees with $\{R=L(r)\}$ for $r\geq
K\delta$. To finish the construction of the hypersurface $\Sigma$ in
Theorem~\ref{thm:model}, we still need to interpolate between $L(r)$
and the function $S(r)=\sqrt{1+ar^2}$ whose graph is the level set
$\{\Psi_\st(r,R) = ar^2-R^2 = -1\}$. Unfortunately, this cannot done
directly with the maximum construction because the graph of $L$ ceases
to define an $i$-convex hypersurface before it intersects the graph of
$S$. The solution is to interpolate from $L$ to a quadratic function
$Q(r)=1+br+cr^2/2$ and from there to $S$. The details are rather
involved due to the fact that the simple sufficient
condition~\eqref{eq:shape-above} fails and one needs to invoke the
full necessary and sufficient condition to ensure $i$-convexity during
this interpolation. 

{\bf Step 4. }
In Step 3 we constructed the level set $\Sigma$ as a graph
$\{R=\phi(r)\}$. To construct the $i$-lc function
$\Psi:H_\gamma\to\R$, in view of Corollary~\ref{cor:foliation} it
suffices to connect $\Sigma$ on both sides to level sets of $\Psi_\st$
by a smooth family of $i$-convex graphs. Towards larger $R$ this is a
simple application of the maximum construction, whereas towards
smaller $R$ it requires 1-parametric versions of the constructions in
Steps 1-3. This proves Theorem~\ref{thm:model}.
\end{proof}

\section{Existence of Stein structures}\label{sec:ex}

In this section we prove the Existence Theorem~\ref{thm:ex}.

{\bf Step 1: Extension of complex structures over handles. }\\
Consider an almost complex cobordism $(W,J)$ of complex dimension
$n\geq 1$ such that $J$ is integrable near $\p_-W$, and $\p_-W$ is
$J$-convex when cooriented by an inward pointing vector field. For
$k\leq n$ consider an embedding $f:(D^k,\p D^k)\into(W,\p_-W)$, where
$D^k\subset\R^k\subset\C^n$ is the closed unit disk.  

\begin{prop}\label{prop:J-ext}
The almost complex structure $J$ is homotopic rel $\Op(\p_-W)$ to one
which is integrable near $f(D^k)$. 
\end{prop}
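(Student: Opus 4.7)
The plan is to proceed in three stages: first homotope $J$ so that $f(D^k)$ becomes totally real; then invoke a neighborhood theorem to identify a tubular neighborhood of $f(D^k)$ biholomorphically with a neighborhood of the standard disk $D^k\subset\R^k\subset\C^n$ carrying $i$; and finally transport $i$ back and interpolate so as to get a structure equal to $J$ outside a small neighborhood of $f(D^k)$ (in particular on $\Op(\p_-W)$) and integrable near $f(D^k)$.

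\textbf{Step 1 (totally real deformation).} The first task is to homotope $J$, supported in a small neighborhood of $f(D^k)$ disjoint from $\Op(\p_-W)$, so that $df$ becomes a totally real monomorphism. This is an unobstructed section-extension problem: the fibrewise space of linear complex structures on $\R^{2n}$ making a fixed $k$-plane totally real is nonempty (using the hypothesis $k\leq n$) and contractible. Since $J$ is already integrable on $\Op(\p_-W)$, in the handle-attachment setup relevant for Theorem~\ref{thm:ex} the embedding $f$ is already totally real near $\p D^k$ (if necessary, a preliminary small perturbation of $f$ inside $\Op(\p_-W)$ arranges this, using again $k\leq n$). A relative parametric argument along $D^k$ then yields the required homotopy of $J$.

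\textbf{Step 2 (complex normal form).} Once $f(D^k)$ is totally real, the standard neighborhood theorem for totally real submanifolds produces a diffeomorphism $\Phi$ from a neighborhood $U\subset W$ of $f(D^k)$ onto a neighborhood of $D^k\subset\R^k\subset\C^n$ such that $\Phi(f(D^k))=D^k$ and $\Phi_*J=i$ along $D^k$. Near $\p D^k$, where $J$ is already integrable and $f(D^k)$ is totally real, the relative form of the theorem allows $\Phi$ to be taken as an honest biholomorphism matching the given complex collar along $\p_-W$. Setting $J'':=\Phi^*i$ yields an integrable structure on $U$ which coincides with $J$ along $f(D^k)$ and on a neighborhood of $\p D^k$.

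\textbf{Step 3 (interpolation).} Since $J$ and $J''$ agree along $f(D^k)$ and both make it totally real, and since at each point the set of almost complex structures rendering a fixed subspace totally real is convex, a cut-off convex combination $J_t=(1-t\chi)J+t\chi J''$, with $\chi$ supported in $U$ and equal to $1$ on a smaller neighborhood of $f(D^k)$, produces the desired homotopy from $J$ to an almost complex structure integrable near $f(D^k)$, stationary on $\Op(\p_-W)$. The main obstacle is really the relative form of Step 2: one must ensure that the biholomorphic identification extends the prescribed integrable structure along $\p_-W$ rather than merely agreeing with $J$ pointwise on $f(D^k)$. This amounts to choosing holomorphic collar coordinates along $\p_-W$ in which $f(D^k)$ sits as a standard totally real half-disk, a task that is possible precisely because $J$ is integrable on $\Op(\p_-W)$ and $f$ is totally real there; once it is completed, Steps~1 and~3 are entirely soft.
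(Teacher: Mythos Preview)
Your Step~1 contains a genuine error: the space of linear complex structures on $\R^{2n}$ making a fixed $k$-plane $P$ totally real is \emph{not} contractible. For $k=n$, a complex structure $J$ with $P=\R^n\times 0$ totally real is determined by $J|_P:\R^n\to\R^{2n}$, which must have image transverse to $P$; writing $J|_P=\begin{pmatrix}B\\ C\end{pmatrix}$ this means $C\in GL(n,\R)$, so the space is $M_n(\R)\times GL(n,\R)\simeq O(n)$. Thus the section-extension problem over $D^k$ rel $\p D^k$ is \emph{not} automatically unobstructed, and you have not shown that $J$ can be homotoped to make $f(D^k)$ totally real. This is exactly the point at which the paper does real work: it trivializes the bundles, phrases the problem as homotoping $df:(D^k,\p D^k)\to(V_{2n,k},V_{2n-1,k-1})$ into the totally real Stiefel manifold $(V_{n,k}^\C,V_{n-1,k-1}^\C)$, and invokes the (nontrivial) surjectivity of $\pi_k(V_{n,k}^\C,V_{n-1,k-1}^\C)\to\pi_k(V_{2n,k},V_{2n-1,k-1})$ together with Gromov's $h$-principle for totally real embeddings. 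Your approach (moving $J$ rather than $f$) faces an equivalent obstruction and needs an equivalent computation; claiming the fibre is contractible sidesteps the actual content of the proof.

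There is also a smaller problem in Step~3: a convex combination $(1-t\chi)J+t\chi J''$ of almost complex structures is not in general an almost complex structure (it will not square to $-\id$), and the set of almost complex structures is not convex in any linear sense. Since $J$ and $J''$ agree along $f(D^k)$ this is repairable---one can interpolate in a tubular neighbourhood using that the space of almost complex structures is a manifold and the two sections are $C^0$-close---but as written the formula is wrong. The paper avoids this issue entirely: having isotoped $f$ to a totally real $f_1$, it extends $f_1$ to a tubular embedding $\wt f_1$ whose differential is $\C$-linear along the core and holomorphic near the boundary (via real-analytic approximation and complexification), pushes forward $i$, and then uses isotopy extension to transport the resulting integrable structure back to a neighbourhood of the original $f(D^k)$.
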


\begin{proof}
After trivializing the relevant bundles, the differential of $f$
defines a map 
$$
   df:(D^k,\p D^k)\to (V_{2n,k},V_{2n-1,k-1}),
$$
where $V_{m,\ell}$ is the Stiefel manifold of $\ell$-frames in
$\R^m$. Let $V_{m,\ell}^\C\subset V_{2m,\ell}$ be the Stiefel manifold
of complex $\ell$-frames in $\C^m$, or equivalently, of totally real
$\ell$-frames in $\R^{2m}$. 

\begin{problem}
For each $n\geq 1$ and $k\leq n$, the map
$$
   \pi_k(V_{n,k}^\C,V_{n-1,k-1}^\C)\to \pi_k(V_{2n,k},V_{2n-1,k-1}) 
$$
induced by the obvious inclusions is surjective. 
\end{problem}

Thus there exists a homotopy $F_t:(D^k,\p D^k)\to
(V_{2n,k},V_{2n-1,k-1})$ from $F_0=df$ to some $F_1:(D^k,\p D^k)\to
(V_{n,k}^\C,V_{n-1,k-1}^\C)$. Now a relative version of {\em Gromov's
$h$-principle for totally real embeddings~\cite{Gro86,EliMis02}} yields
an isotopy of embeddings $f_t:(D^k,\p D^k)\into(W,\p_-W)$ from $f_0=f$
to a totally real embedding $f_1$. 

By a further isotopy we can achieve that $f_1|_{\p D^k}$ is real
analytic. We complexify $f_1|_{\p D^k}$ to a holomorphic embedding 
from a neighbourhood of $\p D^k$ in $\C^n$ into a slight extension
$\wt W$ of $W$ past $\p_-W$, and then extend it to an embedding $\wt
f_1:D^k\times D^{2n-k}_\eps\into \wt W$ which agrees 
with $f_1$ on $D^k=D^k\times 0$ and whose differential is complex
linear along $D^k$. The push-forward $(\wt f_1)_*i$ of the standard
complex structure $i$ on $D^k\times D^{2n-k}_\eps\subset\C^n$ agrees
with $J$ on a neighbourhood of $f_1(\p D^k)$ (since $\wt f_1$ is
holomorphic there) and at points of $f_1(D^k)$. Thus we can extend
$(\wt f_1)_*i$ to an almost complex structure $\wt J$ on $W$ which
coincides with $J$ near $\p_-W$ and outside a neighbourhood of
$f_1(D^k)$ and is integrable near $f_1(D^k)$. An application of the
isotopy extension theorem now yields the desired almost complex
structure which coincides with $J$ near $\p_-W$ and is integrable near
the original disk $f(D^k)$.  
\end{proof}

By induction over the handles, Proposition~\ref{prop:J-ext} yields the
following special case of the Gromov--Landweber theorem:

\begin{cor}[Gromov~\cite{Gro73}, Landweber~\cite{Lan74}]
Let $(V,J)$ be an almost complex manifold of complex dimension $n\geq
1$ which admits an exhausting Morse function $\phi:V\to\R$ without
critical points of index $>n$. Then $J$ is homotopic to an integrable
complex structure. \qed 
\end{cor}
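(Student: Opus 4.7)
The plan is a straightforward induction over the handles of a Morse decomposition of $V$, with Proposition~\ref{prop:J-ext} providing the inductive step at each handle of positive index. First I would perturb $\phi$ to an exhausting Morse function still without critical points of index exceeding $n$, choose regular values $r_0<r_1<r_2<\cdots$ with $r_0<\min\phi$, $r_i\to\infty$, and at most one critical value of $\phi$ in each interval $(r_i,r_{i+1})$, and set $W_i:=\{\phi\leq r_i\}$. Then $V=\bigcup_i W_i$, and passing from $W_{i-1}$ to $W_i$ corresponds to attaching a single handle of some index $k\leq n$ (or adding a disjoint ball when $k=0$).

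The inductive hypothesis at stage $i$ is that, after a homotopy of $J$ through almost complex structures, $J$ is integrable on an open neighborhood of $W_i$ and $\p W_i$ is $J$-convex, so that the hypotheses of Proposition~\ref{prop:J-ext} are in place for the next handle. The base case of a $0$-handle is trivial: a ball is contractible, so the restriction of $J$ can be homotoped to the push-forward of the standard complex structure on a ball in $\C^n$, whose round boundary sphere is $J$-convex. For a handle of index $1\leq k\leq n$, I would identify the cobordism from a slightly shrunken sublevel set $W_{i-1}'\subset W_{i-1}$ (with $\p W_{i-1}'$ still $J$-convex, supplied by the previous step) to $W_i$ with the manifold $W$ of Proposition~\ref{prop:J-ext}, and take the core $k$-disk of the attached handle as the embedded disk $f:(D^k,\p D^k)\into(W,\p W_{i-1}')$. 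Proposition~\ref{prop:J-ext} then produces a homotopy of $J$, fixed near $\p W_{i-1}'$, after which $J$ is integrable on an open neighborhood of $W_{i-1}'\cup f(D^k)$.

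To close the inductive step, I would use that $W_i$ deformation retracts onto $W_{i-1}'\cup f(D^k)$ by a retraction supported near the handle; an ambient isotopy realizing this retraction, followed by pulling back $J$, gives a further homotopy of $J$ so that the resulting structure is integrable on a neighborhood of the full sublevel set $W_i$. A $C^0$-small smoothing (Corollary~\ref{cor:min}, or its family version Corollary~\ref{cor:foliation}) then produces a $J$-convex sublevel set just below $\p W_i$ to play the role of $\p W_i'$ for the next stage. Concatenating the homotopies across all stages, whose supports form a locally finite family in $V$, yields a global homotopy from the original $J$ to an integrable complex structure $J'$.

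The main obstacle is really packaged inside Proposition~\ref{prop:J-ext}, where the surjectivity of the relative Stiefel map $\pi_k(V_{n,k}^\C,V_{n-1,k-1}^\C)\to\pi_k(V_{2n,k},V_{2n-1,k-1})$ combined with Gromov's $h$-principle for totally real embeddings does the essential geometric work. The remaining business is Morse-theoretic bookkeeping; the only mildly delicate points are (i) maintaining $J$-convex bounding level sets from one stage to the next so that Proposition~\ref{prop:J-ext} can be reapplied, and (ii) propagating integrability from a neighborhood of the attaching cores out to an actual neighborhood of each sublevel set $W_i$.
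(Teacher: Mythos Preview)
Your overall strategy---induction over the handles using Proposition~\ref{prop:J-ext}---is exactly what the paper does, and your Morse-theoretic setup, the base case, and your point (ii) (pushing integrability from a neighbourhood of the core out to a neighbourhood of $W_i$ via an ambient isotopy) are all sound.

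The trouble is point (i). You rightly notice that Proposition~\ref{prop:J-ext} as stated asks for $\p_-W$ to be $J$-convex, and you try to carry this along the induction. But Corollaries~\ref{cor:min} and~\ref{cor:foliation} do not do what you want: they combine or straighten \emph{pre-existing} $J$-convex hypersurfaces; they cannot manufacture a $J$-convex hypersurface surrounding $W_{i-1}'\cup f(D^k)$ where none was before. Producing such a surrounding hypersurface is precisely the content of Theorem~\ref{thm:model} together with the isotropic-attachment step in Proposition~\ref{prop:ex}---that is, the full Step~2 of the proof of Theorem~\ref{thm:ex}---and that machinery needs $n>2$. The corollary, however, is asserted for all $n\geq 1$, so this route is too restrictive even if you had invoked the correct tools.

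The clean fix is to look back at the proof of Proposition~\ref{prop:J-ext} and observe that the $J$-convexity of $\p_-W$ is never actually used there: the Stiefel-manifold argument only needs the field of complex tangencies $\xi\subset T\p_-W$ (which any real hypersurface in an almost complex manifold carries), and the $h$-principle for totally real embeddings and the complexification near $f(\p D^k)$ only need $J$ to be integrable near $\p_-W$. So drop the $J$-convexity bookkeeping entirely and run the induction with the sole hypothesis that $J$ is already integrable on a neighbourhood of $W_{i-1}$. This is what the paper's one-line proof is implicitly doing, and it works uniformly for every $n\geq 1$.
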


{\bf Step 2: Extension of $J$-convex functions over handles. }\\
Consider again $(W,J)$ and $f:(D^k,\p D^k)\into(W,\p_-W)$ as in Step
1. After applying Proposition~\ref{prop:J-ext} we may assume that $J$
is integrable near $\Delta:=f(D^k)$. After real analytic approximation and
complexification, we may assume that $f$ extends to a holomorphic
embedding $F:H_\gamma\into \wt W$, where $H_\gamma$ is the standard
handle $D^k_{1+\gamma}\times D^{2n-k}_\gamma\subset\C^n$ and $\wt W$
is a slight extension of $W$ past $\p_-W$. 

Let $\phi$ be a given $J$-convex function near $\p_-W=\{\phi=-1\}$. To
finish the proof of Theorem~\ref{thm:ex}, we need to extend $\phi$ to
a $J$-convex function $\wt\phi$ on a neighbourhood of $\Delta$ whose
level set $\{\wt\phi=-1\}$ coincides with $\p_-W$ outside a
neighbourhood of $\p\Delta$ and surrounds $f(D^k)$ in $W$ as shown in
Figure~\ref{fig:J-convex-surr}. 
\begin{figure}
\centering
\includegraphics{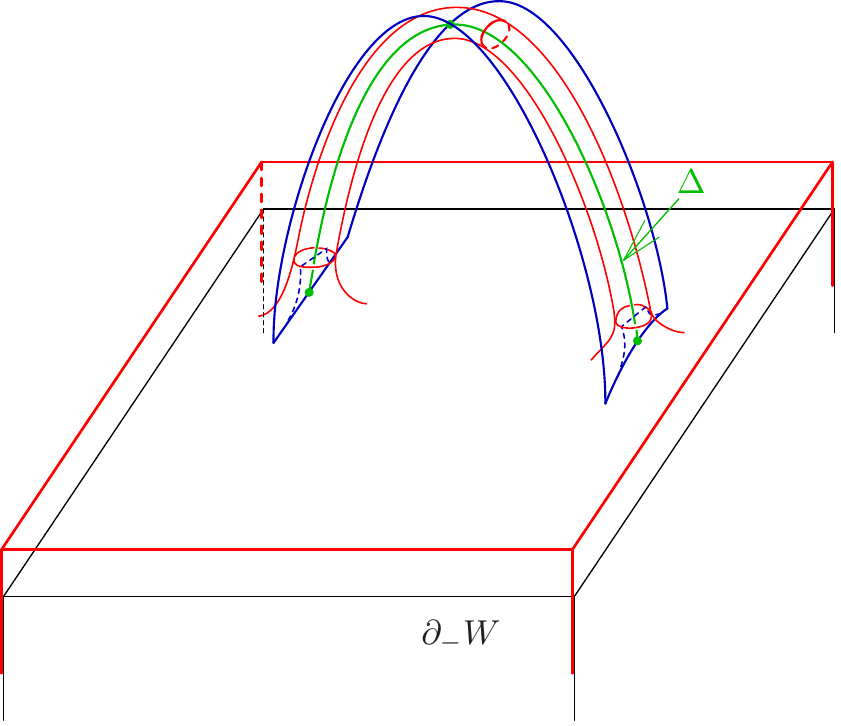}
\caption{Surrounding a $J$-orthogonally attached totally real disk.}
\label{fig:J-convex-surr}
\end{figure}

Equivalently, we need to extend $F^*\phi$ to
an $i$-convex function $\Psi$ on $H_\gamma$ whose
level set $\{\Psi=-1\}$ coincides with $\{F^*\phi=-1\}$ near $\p
H_\gamma$ and surrounds $D^k$ in $H_\gamma$. According to
Theorem~\ref{thm:model} in the previous section, this can be done if
we can arrange that $F^*\phi$ equals the standard function
$\Psi_\st(r,R)=ar^2-R^2$ near $\p D^k$. 

To analyze the last condition, note that the $n$-disk $D^n$ meets the
level set $\{\Psi_\st=-1\}$ {\em $i$-orthogonally} along $\p D^n$ in
the sense that $i(T_xD^n)\subset T_x\Sigma$ for all $x\in\p
D^n$. Conversely, suppose that $D^n$ is $i$-orthogonal to the level
set $\{F^*\phi=-1\}$ along $\p D^k$. Then $F^*\phi$ and $\Psi_\st$
have the same kernel $T_x\p D^n\oplus i(T_xD^n)$ at $x\in\p D^k$. 
After rescaling we may assume that $F^*\phi$ agrees with $\Psi_\st$ to
first order along $\p D^k$, so by Corollary~\ref{cor:totally-real} we
can deform $F^*\phi$ to make it coincide with $\Psi_\st$ near $\p
D^k$. 

The preceding discussion shows that it suffices to arrange that
$F(D^n\cap H_\gamma)$ is $J$-orthogonal to $\p_-W$ along
$\p\Delta=f(\p D^k)$.  
This can be arranged by appropriate choice of the extension $F$
provided that $\Delta$ is $J$-orthogonal to $\p_-W$ along
$\p\Delta$. Note that a necessary condition for this is $JT_x\p\Delta
\subset T_x\p_-W$ for $x\in \p\Delta$, which means that $\p\Delta$ is
{\em isotropic} for the contact structure $\xi=T\p_-W\cap 
J(T\p_-W)$ on $\p_-W$. Conversely, if this condition holds it is not
hard to arrange $J$-orthogonality. So we have reduced the proof of
Theorem~\ref{thm:ex} to 

\begin{prop}\label{prop:ex}
Consider an almost complex cobordism $(W,J)$ of complex dimension
$n$ such that $J$ is integrable near $\p_-W$, and $\p_-W$ is
$J$-convex when cooriented by an inward pointing vector field. If
$n > 2$, then any embedding $f:(D^k,\p D^k)\into(W,\p_-W)$, $k\leq
n$, is isotopic to one which is totally real on $D^k$ and isotropic on
$\p D^k$.  
\end{prop}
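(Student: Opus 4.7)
The plan is to deform $f$ in two successive stages via $h$-principles for embeddings: first to make $f|_{\partial D^k}$ isotropic in the contact manifold $(\partial_-W,\xi)$, and then, relative to this new boundary condition, to make the interior of $f$ totally real. Each stage reduces via the appropriate $h$-principle to an obstruction-theoretic question about formal data, and the hypothesis $n > 2$ provides the dimensional room required in both.

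\textbf{Stage 1 (isotropic boundary).} The restriction $g := f|_{\partial D^k}$ is an embedding of $S^{k-1}$ into the contact $(2n-1)$-manifold $(\partial_-W,\xi)$. By Gromov's $h$-principle for isotropic embeddings in contact manifolds~\cite{Gro86,EliMis02}, it suffices to produce formal data: a homotopy $F^s:T\partial D^k\to T\partial_-W$ of bundle monomorphisms covering $g$, from $F^0=dg$ to some $F^1$ whose image lies in $\xi$ and is isotropic with respect to $d\alpha|_\xi$. The obstruction is controlled by the inclusion of the bundle of isotropic $(k-1)$-frames in $\xi$ into the bundle of $(k-1)$-frames in $T\partial_-W$; since $\xi$ has rank $2n-2\ge 2k-2$ and the ambient contact manifold has dimension $\ge 5$, this inclusion is surjective on the relevant homotopy groups, so the formal data exists. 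Applying the $h$-principle and then propagating the isotopy into $W$ by the ambient isotopy extension theorem, we may assume that $f$ itself has isotropic boundary.

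\textbf{Stage 2 (totally real interior).} We now deform $f$ rel $\partial D^k$ to be totally real on $D^k$ using the relative version of Gromov's $h$-principle for totally real embeddings (the same tool invoked in the proof of Proposition~\ref{prop:J-ext}). The required formal data is a homotopy $G^s:TD^k\to TW$ of bundle monomorphisms covering $f$, constant along $\partial D^k$, from $G^0=df$ to a totally real $G^1$. The boundary data is compatible with a totally real extension because an isotropic $(k-1)$-frame in $\xi_x$, completed by the inward normal $\nu$ to $\partial_-W$, is automatically a totally real $k$-frame in $T_xW$: since $\xi$ is $J$-invariant, the $(k-1)$-frame and its $J$-image span a $(2k-2)$-dimensional subspace of $\xi$ which is complementary to the complex line $\Span_\C(\nu)$. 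Thus the surjectivity
$$
\pi_k(V^\C_{n,k},V^\C_{n-1,k-1}) \to \pi_k(V_{2n,k},V_{2n-1,k-1})
$$
already used in Proposition~\ref{prop:J-ext} delivers the required formal homotopy rel boundary, and the $h$-principle converts it into the desired isotopy of embeddings.

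\textbf{Main obstacle.} The hardest part is verifying the formal data in Stage 1 in the critical case $k=n$, in which $\partial D^n$ must be made Legendrian in $\partial_-W$; both the surjectivity in the obstruction computation and the application of the Legendrian $h$-principle for embeddings require $\dim\partial_-W = 2n-1\ge 5$, i.e.\ $n\ge 3$. This is precisely where the hypothesis $n > 2$ enters, and it matches the dimension restriction of Theorem~\ref{thm:ex}.
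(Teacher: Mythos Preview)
Your two-stage plan has a real gap in the critical case $k=n$. In Stage~1 you invoke ``Gromov's $h$-principle for isotropic embeddings'' to make $\p D^n$ Legendrian, but the $h$-principle in~\cite{Gro86,EliMis02} is for isotropic \emph{immersions}, not embeddings. For subcritical isotropic submanifolds one can upgrade to embeddings by a generic perturbation, but in the Legendrian case this is exactly the heart of the matter: the immersion $h$-principle only produces a \emph{regular homotopy} $g_t$ from $g_0=f|_{\p D^n}$ to a Legendrian embedding $g_1$, and turning this into an isotopy of embeddings requires Whitney's cancellation theorem (hence $2n-1\geq 5$) together with a mechanism to kill the self-intersection index of the trace of $g_t$. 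The paper supplies this via the Legendrian stabilization construction, which works precisely when $n>2$. You have absorbed all of this into a black-box citation, but it is the actual content of the critical case; your ``Main obstacle'' paragraph misidentifies the difficulty as lying in the formal obstruction computation, which in fact holds for all $n\geq 1$ and $k\leq n$.

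There is a second, related problem with decoupling the two stages. The paper first builds the formal totally real homotopy $F_t$ on the whole disk and only then realizes its boundary restriction $G_t=F_t|_{\p D^k}$ geometrically, carrying along the $2$-parameter family $G_t^s$ that links $dg_t$ to $G_t$; this compatibility is what permits the subsequent totally real extension over the interior. Your Stage~2 instead asks for formal data \emph{rel} a boundary that was produced independently in Stage~1, and the surjectivity $\pi_k(V^\C_{n,k},V^\C_{n-1,k-1})\to\pi_k(V_{2n,k},V_{2n-1,k-1})$ you cite does not deliver a homotopy rel $\p D^k$: it gives a homotopy of \emph{pairs}, which is free to move the boundary. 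The Remark immediately after the paper's subcritical discussion is aimed squarely at this mistake: one can \emph{always} isotope the boundary to a Legendrian by some smooth isotopy (even when $n=2$), so if your decoupled argument were valid it would prove the proposition for $n=2$, where it is false.
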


The remainder of this section is devoted to the proof of this
proposition. 

{\bf The subcritical case. }
Recall from Step 1 that there exists a homotopy $F_t:(D^k,\p D^k)\to
(V_{2n,k},V_{2n-1,k-1})$ from $F_0=df$ to some $F_1:(D^k,\p D^k)\to
(V_{n,k}^\C,V_{n-1,k-1}^\C)$. Restricting it to the boundary provides
a homotopy $G_t=F_t|_{\p D^k}:\p D^k\to V_{2n-1,k-1}$ from
$G_0=df|_{\p D^k}$ to some $G_1:\p D^k\to V_{n-1,k-1}^\C$.  
Now {\em Gromov's $h$-principle for isotropic
  immersions~\cite{Gro86,EliMis02}} yields a homotopy of immersions
$g_t:\p D^k\to \p_-W$ from $g_0=f|_{\p D^k}$ to an isotropic immersion
$g_1$ together with a 2-parameter family of maps $G_t^s:\p D^k\to
V_{2n-1,k-1}$ satisfying $G_t^0=dg_t$, $G_t^1=G_t$, $G_0^s=dg_0$, and
$G_1^s:\p D^k\to V_{n-1,k-1}^\C$ for all $s,t\in[0,1]$. 

If the $g_t$ can be chosen to be {\em embeddings} rather than
immersions, then the $h$-principle for totally real embeddings allows
us to extend the $g_t$ to embeddings $f_t:D^k\into W$ with $f_1$
totally real and the proposition follows. In the {\em subcritical}
case $k<n$, this can be achieved simply by a generic perturbation of
the $g_t$ (keeping $g_1$ isotropic). 

\begin{remark}
The existence of the 2-parameter family $G_t^s$ is crucial for the
application of the $h$-principle for totally real embeddings. Indeed,
we can always connect $g_0=f|_{\p D^k}$ by embeddings $g_t$ to some
isotropic embedding $g_1$, so if we could extend these $g_t$ to
totally real embeddings $D^k\into W$ we would prove
Proposition~\ref{prop:ex} also in the case $k=n=2$ where, as we shall
see below, it is false in general.  
\end{remark}

{\bf The critical case. }
In the {\em critical} case $k=n$, we can still perturb $g_1$ to a
Legendrian embedding, but the $g_t$ need not all be embeddings. To
understand the obstruction to this, consider the immersion
$$
   \Gamma:S^{n-1}\times[0,1]\to\p_-W\times[0,1],\qquad
   (x,t)\mapsto \bigl(g_t(x),t\bigr). 
$$ 
After a generic perturbation, we may assume that $\Gamma$ has finitely
many transverse self-intersections and define its {\em
  self-intersection index}
$$
   I_\Gamma := \sum_p I_\Gamma(p) \in 
   \begin{cases}
      \Z & \text{ if $n$ is even,} \\
      \Z_2 & \text{ if $n$ is odd}
   \end{cases}
$$
as the sum over the indices of all self-intersection points $p$. Here
the index $I_\Gamma(p)=\pm 1$ is defined by comparing the orientations
of the two intersecting branches of $\Gamma$ to the orientation of
$\p_-W\times[0,1]$. For $n$ even this does not depend on the order of
the branches and thus gives a well-defined integer, while for $n$ odd
it is only well-defined mod $2$. By a theorem of Whitney~\cite{Whi44},
for $n>2$, the regular homotopy $g_t$ can be deformed through regular
homotopies fixed at $t=0,1$ to an isotopy iff $I_\Gamma=0$. 

So if the family $g_t$ satisfies $I_\Gamma=0$ we are done. If
$I_\Gamma\neq 0$ we will connect $g_1$ to another Legendrian
embedding $g_2$ by a Legendrian regular homotopy $g_t$, $t\in[1,2]$,
whose self-intersection index equals $-I_\Gamma$. The extended family
$g_t$, $t\in[0,2]$, then has self-intersection index zero, so applying
the previous argument to this family will conclude the proof. 

{\bf Stabilization of Legendrian submanifolds. }
Consider a Legendrian submanifold $\Lambda_0$ in a contact manifold
$(M,\xi)$ of dimension $2n-1$. Near a point of $\Lambda_0$ pick
Darboux coordinates $(q_1,p_1,\dots,q_{n-1},p_{n-1},z)$ in which
$\xi=\ker(dz-\sum_jp_jdq_j)$ and the front projection of $\Lambda_0$
is a standard cusp $z^2=q_1^3$. Deform the two branches of the front
to make them parallel over some open ball
$B^{n-1}\subset\R^{n-1}$. After rescaling, we may thus assume that the
front of $\Lambda_0$ has two parallel branches $\{z=0\}$ and $\{z=1\}$
over $B^{n-1}$, see Figure~\ref{fig:stab}. 
\begin{figure}
\centering
\includegraphics{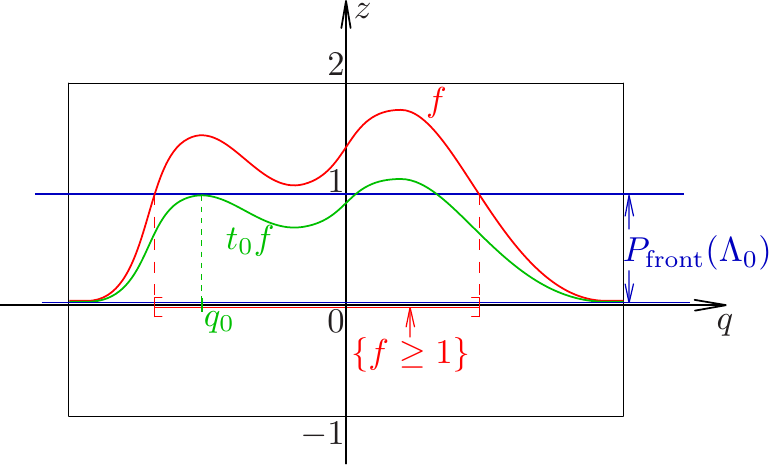}
\caption{Stabilization of a Legendrian submanifold.}
\label{fig:stab}
\end{figure}

Pick a non-negative function $f:B^{n-1}\to\R$ with compact support and
$1$ as a regular value, so $N:=\{f\geq 1\}\subset B^{n-1}$ is a
compact manifold with boundary. Replacing for each $t\in[0,1]$ the
lower branch $\{z=0\}$ by the graph $\{z=tf(q)\}$ of the function $tf$
yields the fronts of a path of Legendrian immersions $\Lambda_t\subset
M$ connecting $\Lambda_0$ to a new Legendrian submanifold $\Lambda_1$. 
Note that $\Lambda_t$ has a self-intersection for each critical point
of $tf$ on level $1$. 

\begin{problem}
The Legendrian regular homotopy $\Lambda_t$, $t\in[0,1]$, has
self-intersection index $(-1)^{(n-1)(n-2)/2}\chi(N)$. 
\end{problem}

\begin{problem}
For $n>2$ there exist compact submanifolds $N\subset\R^{n-1}$ of
arbitrary Euler characteristic $\chi(N)\in\Z$, while for $n=2$ the
Euler characteristic is always positive. 
\end{problem}

These two problems show that for $n>2$ the stabilization construction
allows us find a Legendrian regular homotopy $\Lambda_t$, $t\in[0,1]$,
with arbitrary self-intersection index. In view of the discussion above,
this concludes the proof of Proposition~\ref{prop:ex} and hence of
Theorem~\ref{thm:ex}. \hfill$\Box$
\medskip

\begin{remark}
The condition $n>2$ was used twice in the proof of
Proposition~\ref{prop:ex}: for the application of Whitney's theorem,
and to arbitrarily modify the self-intersection index by
stabilization. 
\end{remark}

To illustrate the failure of Theorem~\ref{thm:ex} for $n=2$, let us
analyze for which oriented plane bundles $V\to S^2$ the total space
admits a Stein structure. Here $V$ is oriented by minus the orientation of
the base followed by that of the fibre. Such bundles are classified by
their Euler class $e(V)$, which equals minus the self-intersection number
$S\cdot S\in\Z$ of the zero section $S\subset V$.  

We can construct each such bundle by attaching a 2-handle to the
4-ball $B^4$ along a topologically trivial Legendrian knot
$\Lambda\subset(S^3,\xi_\st)$. Let $\Delta\subset B^4$ be an embedded
2-disk meeting $\p B^4$ transversely along $\p\Delta=\Lambda$. It fits
together with the core disk $D$ of the handle to an embedded 2-sphere
$S\subset V$ giving the zero section in $V$. 
Recall that the {\em Thurston-Bennequin invariant} 
$\tb(\Lambda)$ is defined as the linking number of $\Lambda$ with a
push-off $\Lambda'$ in the direction of a Reeb vector field on
$(S^3,\xi_\st)$. 

\begin{problem}
The complex structure on $B^4\subset\C^2$ extends to a complex
structure on $V$ for which the core disk $D$ is totally real (and
hence by Theorem~\ref{thm:ex} to a Stein structure on $V$) iff $-e(V)
= S\cdot S = \tb(\Lambda)-1$.  
\end{problem}

In view of Bennequin's inequality $\tb(\Lambda)\leq -1$, this shows
that the construction of Theorem~\ref{thm:ex} works to provide a Stein
structure on $V$ iff $e(V)\geq 2$. A much deeper theorem of
Lisca and Mati\v{c}~\cite{LisMat97} 
(proved via Seiberg-Witten theory) asserts that $S\cdot
S\leq -2$ for every homologically nontrivial embedded 2-sphere $S$ in
a Stein surface, hence $V$ admits a Stein structure iff $e(V)\geq 2$. 
For example, the manifold $S^2\times\R^2$ does not admit any Stein
structure.

\section{Morse-Smale theory for $J$-convex
  functions}\label{sec:Morse-Smale} 

Morse-Smale theory deals with the problem of simplification of a Morse
function, trying to remove as many critical points as the topology
allows. One consequence is the $h$-cobordism theorem and the proof of
the higher-dimensional Poincar\'e conjecture. In this section we study
Morse-Smale theory for $J$-convex Morse functions, resulting in a
Stein version of the $h$-cobordism theorem. 
\medskip

{\bf The $h$-cobordism theorem. }
Let us begin by recalling the celebrated

\begin{theorem}[$h$-cobordism theorem,
  Smale~\cite{Sma62}]\label{thm:h-cobordism}
Let $W$ be an {\em $h$-cobordism}, i.e., a compact cobordism such that
$W$ and $\p_\pm W$ are simply connected and
$H_*(W,\p_-W;\Z)=0$. Suppose that $\dim W\geq 6$. 
Then $W$ carries a function without critical points and constant
on $\p_\pm W$. 
\end{theorem}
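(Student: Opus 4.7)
The plan is to begin with any Morse function $\phi:W\to[0,1]$ with $\phi^{-1}(0)=\p_-W$, $\phi^{-1}(1)=\p_+W$, and no critical points on the boundary, and then successively modify $\phi$ to remove all of its critical points. The central tool is the \emph{Morse cancellation lemma}: if $p,q$ are nondegenerate critical points of indices $k,k+1$ joined by a unique gradient trajectory (with the stable/unstable manifolds meeting transversely along it), then, provided there is room in a standard model, $\phi$ can be modified in a neighborhood of this trajectory to eliminate both $p$ and $q$. All of the work lies in arranging the Morse function, the gradient, and the attaching spheres so that this lemma applies to every critical point.

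First I would rearrange the handles so that critical points of lower index occur at lower values of $\phi$; this uses a standard argument pushing gradient trajectories between critical points of the ``wrong'' order out of the way, which is possible whenever the relevant stable and unstable manifolds are disjoint by dimension count. Next, I would eliminate critical points of index $0$: any local minimum can be paired with an index $1$ critical point via a gradient trajectory (using connectedness of $W$) and cancelled. The delicate step is the elimination of index $1$ critical points. Here one uses the \emph{trading trick}: the belt sphere of a $1$-handle is an $(n-2)$-sphere $B$ in a middle level, and using $\pi_1(W)=0$ one finds an embedded disk bounding a circle dual to $B$; because $\dim W\geq 6$, the Whitney trick lets one isotope this disk to be embedded and disjoint from itself. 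One then introduces a cancelling pair of index $2$ and $3$ critical points and uses the new $2$-handle to cancel the original $1$-handle, effectively replacing each $1$-handle with a $3$-handle. Applying the dual procedure to $-\phi$ removes all handles of indices $n-1$ and $n$.

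In the remaining middle range $2\leq k\leq n-3$, I would use the fact that the Morse complex $C_*(\phi,\nabla\phi)$ computes $H_*(W,\p_-W;\Z)=0$ and is therefore algebraically acyclic. Between levels separating index-$k$ and index-$(k+1)$ handles, the attaching spheres of the $(k+1)$-handles and the belt spheres of the $k$-handles are respectively $k$- and $(n-k-1)$-spheres in a $(2n-1)$-manifold (wait---in a level set of dimension $\dim W-1$), whose algebraic intersection matrix represents the Morse differential. Using Whitney's lemma, which applies because both codimensions $n-k\geq 3$ and $k+1\geq 3$ are at least $3$ and because $\pi_1$ of the level sets is trivial (handle slides and the hypothesis $\pi_1(\p_\pm W)=0$ together give this), I would geometrically realize any algebraic cancellation: by handle slides, modify the basis of attaching spheres so that one $(k+1)$-handle algebraically, and then geometrically, cancels a chosen $k$-handle. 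Iterating this until the Morse complex is empty removes all remaining critical points.

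The principal obstacle throughout is the dimension hypothesis $\dim W\geq 6$. It enters at two essential points: in the trading of index-$1$ (and dually index-$(n-1)$) handles, where one needs embedded Whitney disks to convert a homotopy into an isotopy and to perform the cancelling-pair insertion in codimension at least $3$; and in the middle-dimensional Whitney trick, which requires both the intersecting submanifolds and the Whitney disks to be embedded generically, forcing $\dim W \geq 2k+2$ and $2(n-k)\geq 4$. Controlling fundamental groups of level sets after handle surgery (using $\pi_1(\p_\pm W)=\pi_1(W)=0$) so that Whitney's theorem applies is where the topological hypotheses of the theorem are consumed.
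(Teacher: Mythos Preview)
Your sketch is correct and follows exactly the classical route the paper points to: the paper does not prove Theorem~\ref{thm:h-cobordism} itself but simply records the four geometric lemmas (reordering critical levels, moving attaching spheres, creation, and cancellation) as the key ingredients and refers to Milnor~\cite{Mil65}, and your outline---rearranging, cancelling index $0$ and $n$ handles, trading index $1$ and $n-1$ handles via an auxiliary cancelling pair, then using acyclicity of the Morse complex together with handle slides and the Whitney trick to cancel the middle-index handles---is precisely that argument. Aside from a self-corrected slip on the dimension of the level set and a slightly garbled dimension count in the final paragraph (the actual constraint is that the level set have dimension $\geq 5$ and both spheres have codimension $\geq 2$ there, which is exactly $\dim W\geq 6$ with $2\leq k\leq \dim W-3$), nothing is missing.
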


For the proof, one considers a compact cobordism $W$ with a Morse
function $\phi:W\to\R$ having $\p_\pm W$ as regular level sets and a
gradient-like vector field $X$ for $\phi$. We will refer to such
$(W,X,\phi)$ as a {\em Smale cobordism}. It is called {\em elementary}
if $W_p^-\cap W_q^+=\emptyset$ for all critical points $p\neq q$,
where $W_p^-$ and $W_p^+$ denotes the stable resp.~unstable manifold
of $p$ with respect to $X$. 

The key geometric ingredients in the proof of the $h$-cobordism 
theorem are the following four geometric lemmas about
modifications of Smale cobordisms (see~\cite{Mil65}). The first three
of them are rather simple, while the fourth one is more difficult.  

\begin{lemma}[moving critical levels]\label{lm:reordering}
Let $(W,X,\phi_0)$ be an elementary Smale cobordism. Then there exists a
homotopy $(W,X,\phi_t)$ of elementary Smale cobordisms
which arbitrarily changes the ordering of the values of the critical
points. 
\end{lemma}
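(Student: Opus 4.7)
The plan is to keep $X$ and all critical points fixed and to modify only $\phi_0$ on disjoint tubes of $X$-trajectories surrounding each critical point, adding a flow-invariant bump times a shift profile. Concretely, for any desired target values $c_1,\dots,c_k\in(\phi_0|_{\p_-W},\phi_0|_{\p_+W})$ I will define
$$
   \phi_t \;:=\; \phi_0 \,+\, t\sum_i \mu_i\cdot(\rho_i\circ\phi_0), \qquad t\in[0,1],
$$
where each $\mu_i:W\to[0,1]$ is constant along trajectories of $X$, equals $1$ on a saturated neighborhood of $K_i:=\overline{W_{p_i}^-\cup W_{p_i}^+}$ and has support in a pairwise disjoint saturated open tube $U_i\supset K_i$, and each $\rho_i:\R\to\R$ is compactly supported in $(\phi_0|_{\p_-W},\phi_0|_{\p_+W})$ with $\rho_i(\phi_0(p_i))=c_i-\phi_0(p_i)$ and $|\rho_i'|<1$.

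First I would construct the $U_i$. Since the cobordism is elementary, no $X$-trajectory connects two distinct critical points, so the compact sets $K_i$ are pairwise disjoint inside $W$ (they may touch $\p_\pm W$, but that is harmless). Thickening each $K_i$ to a small open set and then saturating under the $X$-flow yields an open saturated neighborhood $U_i$ of $K_i$; choosing the initial thickenings small enough makes the $U_i$ pairwise disjoint. Next I would build the $X$-invariant cutoff $\mu_i$ supported in $U_i$: away from $p_i$ the trajectories define a foliation, so $\mu_i$ is determined by any smooth bump on a transversal slice; on a Morse chart around $p_i$ I take $\mu_i\equiv 1$, and the match-up is smooth because the two prescriptions agree on every trajectory entering the Morse chart.

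The three required properties then follow by direct computation. Using $X\cdot\mu_i\equiv 0$ one gets
$$
   X\cdot\phi_t \;=\; X\cdot\phi_0\cdot\Bigl(1+t\sum_i \mu_i\,(\rho_i'\circ\phi_0)\Bigr).
$$
At every point at most one $\mu_i$ is nonzero, and $\mu_i\le 1$, $|\rho_i'|<1$, so the parenthesis is strictly positive. Hence $X$ remains gradient-like and $\Crit\phi_t=\Crit\phi_0$. Near $p_i$ one has $\mu_i\equiv 1$ and $\mu_j\equiv 0$ for $j\neq i$, which gives $\phi_t=\phi_0+t(\rho_i\circ\phi_0)$, so $d\phi_t(p_i)=0$,
$$
   \Hess(\phi_t)|_{p_i} \;=\; \bigl(1+t\,\rho_i'(\phi_0(p_i))\bigr)\Hess(\phi_0)|_{p_i},
$$
nondegenerate of the same Morse index, with $\phi_t(p_i)=(1-t)\phi_0(p_i)+tc_i$. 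The support of $\rho_i$ forces $\phi_t=\phi_0$ on a neighborhood of $\p_\pm W$, so these remain regular level sets. Finally, because $X$ and its critical points are unchanged, so are all $W_{p_i}^{\pm}$, hence $(W,X,\phi_t)$ remains elementary; at $t=1$ the critical values are exactly the prescribed $c_i$, so any ordering can be achieved.

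The main obstacle is the construction in Step~1/2: producing the $X$-invariant cutoff $\mu_i$ smoothly across the critical point $p_i$, where the trajectory-foliation degenerates. The remedy is to fix a Morse chart $\phi_0=-|x|^2+|y|^2$ around $p_i$, set $\mu_i\equiv 1$ on a small such chart, and then extend outward by transporting a smooth bump on a transversal slice along the flow; smoothness across the chart boundary follows because both definitions are constant along the (non-critical) trajectories threading the transition region.
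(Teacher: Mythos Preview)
The paper does not give its own proof of this lemma; it is listed among the ``rather simple'' ingredients borrowed from Milnor's lecture notes and then posed as a Problem for the reader. Your argument is exactly the classical one: keep $X$ fixed, build for each $p_i$ a flow-invariant cutoff $\mu_i$ supported in a saturated tube around $\overline{W_{p_i}^-\cup W_{p_i}^+}$ (these tubes are disjoint precisely because the cobordism is elementary), and modify $\phi_0$ by $\mu_i$ times a reparametrization of $\phi_0$. The verification that $X$ remains gradient-like, that the critical set and the Morse indices are unchanged, and that $\p_\pm W$ stay regular level sets is correct.

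One small correction: the hypothesis $|\rho_i'|<1$ is stronger than you need and in fact prevents you from reaching an arbitrary target in a single step, since a compactly supported $\rho_i$ with $|\rho_i'|<1$ forces
$$
   |c_i-\phi_0(p_i)| \;<\; \min\bigl(\phi_0(p_i)-a,\ b-\phi_0(p_i)\bigr),
$$
where $a=\phi_0|_{\p_-W}$ and $b=\phi_0|_{\p_+W}$. Replace it by the one-sided condition $\rho_i'>-1$; then $1+t\mu_i\,\rho_i'(\phi_0)>0$ still holds and every $c_i\in(a,b)$ becomes attainable. Your treatment of the smoothness of $\mu_i$ across $p_i$ is adequate; the tidiest way to say it is to pull back a bump on $\p_-W$ (identically $1$ near the attaching sphere $W_{p_i}^-\cap\p_-W$) along backward trajectories and observe that near $W_{p_i}^+$ the backward limit lands in the region where the bump is already $\equiv 1$, so the extension by the constant $1$ is automatically smooth.
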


\begin{lemma}[moving attaching spheres]\label{lm:moving}
Let $(W,X_0,\phi)$ be a Smale cobordism and $p\in W$ a critical point
whose stable manifold $W_p^-(X_0)$ with respect to $X_0$ intersects
$\p_-W$ along a sphere 
$S_0\subset \p_-W$. Then given any isotopy $S_t\subset \p_-W$,
$t\in[0,1]$, there exists a homotopy of Smale cobordisms $(W,X_t,\phi)$
such that the stable manifold $W^-_p(X_t)$ 
intersects $\p_-W$ along $S_t$. 
\end{lemma}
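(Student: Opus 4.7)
The plan is to realize the isotopy of attaching spheres by an ambient diffeotopy $H_t$ of $W$ that preserves $\phi$, supported in a small collar of $\p_-W$ disjoint from $\Crit(\phi)$, and then to take $X_t:=(H_t)_*X_0$. The key observation is that the push-forward of a gradient-like vector field by a $\phi$-preserving diffeomorphism is again gradient-like for the same $\phi$, so the whole construction reduces to building an appropriate $H_t$.

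First I would extend the isotopy $S_t$, via the isotopy extension theorem on $\p_-W$, to an ambient diffeotopy $h_t:\p_-W\to\p_-W$ with $h_0=\id$ and $h_t(S_0)=S_t$. Next, since $\p_-W$ is a regular level set of $\phi$, pick $\eps>0$ so small that $N:=\phi^{-1}([\phi_c,\phi_c+\eps])$ contains no critical points of $\phi$ (where $\phi_c:=\phi|_{\p_-W}$), rescale $X_0$ on $N$ so that $X_0\phi\equiv 1$ there (this only reparametrizes flow lines, so $W_p^-(X_0)$ is unchanged as a set), and use the flow of $X_0$ to identify $N$ with $\p_-W\times[0,\eps]$ via a diffeomorphism $\Theta$ satisfying $\phi\circ\Theta(x,s)=\phi_c+s$.

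Now fix a smooth cutoff $\chi:[0,\eps]\to[0,1]$ with $\chi(0)=1$ and $\chi\equiv 0$ on $[\eps/2,\eps]$, and define
\[
   H_t\bigl(\Theta(x,s)\bigr):=\Theta\bigl(h_{t\chi(s)}(x),\,s\bigr), \qquad H_t|_{W\setminus N}:=\id.
\]
This $H_t$ is a smooth diffeotopy of $W$, equal to $\id$ near every critical point (all of them lying outside $N$), and by construction $H_t^*\phi=\phi$. Setting $X_t:=(H_t)_*X_0$, one has $X_t\phi=(X_0\phi)\circ H_t^{-1}>0$ off $\Crit(\phi)$, and $X_t=X_0$ in standard form near each critical point, so $(W,X_t,\phi)$ is a Smale cobordism. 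Because $H_t$ conjugates $X_0$-flow to $X_t$-flow, one has $W_p^-(X_t)=H_t\bigl(W_p^-(X_0)\bigr)$, and its intersection with $\p_-W$ is $h_t(S_0)=S_t$, as desired.

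The main conceptual obstacle is the very first step — recognizing that what one needs from the ambient diffeotopy is precisely preservation of $\phi$, since that single property simultaneously guarantees preservation of gradient-likeness and of the critical-point configuration. Once that is clear, everything else is routine: isotopy extension on $\p_-W$, followed by a cutoff of the isotopy in the $\phi$-direction within the collar $N$.
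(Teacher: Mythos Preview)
Your proof is correct and is exactly the standard argument one has in mind for this lemma. The paper does not actually prove Lemma~\ref{lm:moving}: it is left as an exercise (see the Problem immediately following Lemma~\ref{lm:smooth-cancellation}, which asks the reader to prove Lemmas~\ref{lm:reordering}, \ref{lm:moving} and~\ref{lm:smooth-creation}), so there is no proof in the paper to compare against. Your approach---extend $S_t$ to an ambient diffeotopy of $\p_-W$, damp it out in a collar using a cutoff in the $\phi$-direction to obtain a $\phi$-preserving diffeotopy $H_t$ of $W$, and set $X_t=(H_t)_*X_0$---is precisely the intended one.

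One cosmetic remark: rather than ``rescaling $X_0$ on $N$'' (which, read literally, risks a discontinuity of $X_0$ at the upper boundary of the collar), it is cleaner to leave $X_0$ untouched and simply define the collar identification $\Theta$ via the flow of the auxiliary vector field $X_0/(X_0\phi)$ on $N$; then $X_t:=(H_t)_*X_0$ is manifestly smooth and gradient-like. This changes nothing of substance in your argument.
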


\begin{lemma}[creation of critical points]\label{lm:smooth-creation}
Let $(W,X_0,\phi_0)$ be a Smale cobordism without critical points. Then
for any $1\leq k\leq\dim W$ and any $p\in\Int W$ there exists a 
Smale homotopy $(W,X_t,\phi_t)$, $t\in[0,1]$, fixed outside a
neighbourhood of $p$, which creates a
pair of critical points of index $k-1$ and $k$ connected by a unique
trajectory of $X_1$ along which the stable and unstable manifolds
intersect transversely.  
\end{lemma}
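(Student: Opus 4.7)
The plan is to produce the pair of critical points inside a small flow box around $p$, using the classical cubic-plus-quadratic birth model, and to graft this local picture onto $(W, X_0, \phi_0)$ via a carefully chosen cutoff that leaves the Smale cobordism untouched outside a neighbourhood of $p$.

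Since $\phi_0$ has no critical points, $X_0$ vanishes nowhere, so the flow-box theorem gives local coordinates $(x_1, y_1, \dots, y_{k-1}, z_1, \dots, z_{n-k})$ on a small open neighbourhood $U$ of $p$ in which $X_0 = \partial_{x_1}$ and $\phi_0(x_1, y, z) = x_1$. Inside $U$, I would introduce the family
\begin{equation*}
\Phi_s(x_1, y, z) := \frac{x_1^3}{3} - s x_1 - \frac{1}{2}|y|^2 + \frac{1}{2}|z|^2, \qquad s \in [-1, 1],
\end{equation*}
which has no critical points for $s < 0$, a single degenerate one at the origin for $s = 0$, and for $s > 0$ exactly two nondegenerate critical points $(\pm \sqrt{s}, 0, 0)$. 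The $x_1$-Hessian at these points is $\pm 2\sqrt{s}$, so combined with the index $k - 1$ of the quadratic part $-\frac{1}{2}|y|^2 + \frac{1}{2}|z|^2$ they have indices $k - 1$ at $+\sqrt{s}$ (call this point $p_{k-1}$) and $k$ at $-\sqrt{s}$ (call it $p_k$), as required.

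To globalise, pick a smooth monotone path $s(t)$, $t \in [0, 1]$, starting slightly negative and ending slightly positive, and define $\phi_t$ by interpolating between $\phi_0$ and $\Phi_{s(t)}$ using a cutoff supported in a small ball in $U$; set $X_t$ to be a gradient of $\phi_t$ for a fixed metric inside this ball, joined smoothly to $X_0$ outside. The main technical obstacle is to arrange the cutoff so that no spurious critical points of $\phi_t$ appear in the transition region where the cutoff is neither $0$ nor $1$, and so that $X_t$ remains gradient-like for $\phi_t$ throughout the homotopy; this is handled by confining the birth to an arbitrarily small scale---after rescaling both the ball and the interval of parameter values, the perturbation of $\phi_0$ and $X_0$ becomes $C^1$-small outside the core region, so by continuity $X_t$ stays nonzero and transverse to the level sets of $\phi_t$ there.

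Finally, at $t = 1$ the model gradient flow is $\dot x_1 = x_1^2 - s(1)$, $\dot y_j = -y_j$, $\dot z_j = z_j$, from which direct integration gives the unstable manifold $W^u(p_{k-1}) = \{x_1 > -\sqrt{s(1)},\ y = 0\}$ of dimension $n - k + 1$ and the stable manifold $W^s(p_k) = \{x_1 < \sqrt{s(1)},\ z = 0\}$ of dimension $k$. Their intersection is the single segment $\{-\sqrt{s(1)} < x_1 < \sqrt{s(1)},\ y = z = 0\}$---a unique connecting orbit---and transversality along it is immediate from the observation that the tangent spaces $\{y = 0\}$ and $\{z = 0\}$ together span $\R^n$, which yields the required Smale homotopy.
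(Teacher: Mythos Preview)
Your proof is correct and is precisely the intended argument: the paper does not spell out a proof of this lemma but leaves it as a problem, while supplying the birth--death local model $\phi_t(x)=x_1^3\mp(t-t_0)x_1-x_2^2-\dots-x_k^2+x_{k+1}^2+\dots+x_m^2$, which is (up to harmless normalisation constants) exactly the family $\Phi_s$ you implant via a flow box and cutoff. Your handling of the grafting---making the birth arbitrarily small so that the perturbation is $C^1$-small on the cutoff annulus---and your verification of the unique transverse connecting orbit are both correct and standard.
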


\begin{lemma}[cancellation of critical points]
\label{lm:smooth-cancellation}
Suppose that a Smale cobordism $(W,X_0,\phi_0)$ contains exactly two
critical points of index $k-1$ and $k$ which are connected by a unique
trajectory of $X$ along
which the stable and unstable manifolds intersect transversely. 
Then there exists a Smale homotopy $(W,X_t,\phi_t)$, $t\in[0,1]$,
which kills the critical points, so the cobordism $(W,X_1,\phi_1)$ has
no critical points.  
\end{lemma}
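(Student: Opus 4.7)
The plan is to work inside a neighborhood of the single connecting trajectory $\gamma$ from $q$ (index $k$) to $p$ (index $k-1$) and locally deform $X_0$ and $\phi_0$ so as to eliminate both critical points at once. First I would apply Lemma~\ref{lm:reordering} to bring $\phi_0(p)$ and $\phi_0(q)$ close together, so that $\overline{\gamma}=\gamma\cup\{p,q\}$ is the only dynamics in a thin slab between two regular level sets of $\phi_0$. Near $p$ and $q$ I would use the Morse lemma to get standard local charts in which $X_0$ is the Euclidean gradient of the quadratic normal form.

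The central step is to build, using the transverse intersection of the stable and unstable manifolds along $\gamma$, a tubular neighborhood $U$ of $\overline{\gamma}$ together with a diffeomorphism to the standard model $\R^{k-1}\times\R^{\dim W-k+1}$ with coordinates $(x,y)$, in which
\[
   \phi_0 = -\tfrac{1}{2}|x|^2 + g(y) + \mathrm{const}
\]
and $X_0$ is gradient-like for this expression. Here $g$ is a function of $y$ with exactly two nondegenerate critical points on the $y_1$-axis: a local minimum corresponding to $p$ and an index-$1$ saddle corresponding to $q$, joined by a single trajectory of $\nabla g$. The normal framings along $\gamma$ are produced by propagating the Morse-lemma normal forms at $p$ and $q$ along $\gamma$ via the flow of $X_0$, and the transversality hypothesis is exactly what makes the two resulting framings on the normal bundle of $\gamma$ compatible.

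With such coordinates in place, the cancellation reduces to a one-dimensional birth-death deformation of $g$: replace $g$ by a compactly supported homotopy $g_t$ that drags the saddle and the minimum toward each other and annihilates them in the standard way, producing a function $g_1$ with no critical points on the $y$-factor. Reassembling $\phi_1 = -\tfrac{1}{2}|x|^2 + g_1(y)$ on $U$, patching back to $\phi_0$ outside $U$, and taking $X_1$ gradient-like for $\phi_1$ yields the desired Smale homotopy $(W,X_t,\phi_t)$.

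The main obstacle is the middle step --- the construction of the normal-form coordinates on $U$. The framings of the normal bundle of $\gamma$ induced by the Morse lemma at $p$ and at $q$ need not agree automatically after flowing along $\gamma$; the transversality hypothesis is precisely what rules out a monodromy obstruction. Even so, to turn this into an honest trivialization one may first have to apply Lemma~\ref{lm:moving} to pre-adjust the attaching spheres so that the framings line up pointwise, after which an appeal to the tubular neighborhood theorem along $\gamma$ assembles them into global coordinates on $U$.
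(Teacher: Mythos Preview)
The paper does not prove this lemma. It lists it among the four geometric ingredients of the $h$-cobordism theorem, singles it out as ``more difficult'' than the other three, and refers to Milnor's lecture notes~\cite{Mil65} for the argument. So there is no in-paper proof to compare against; your sketch is essentially the classical Milnor approach and is headed in the right direction.

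Two remarks on the details:

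\textbf{(i)} Your appeal to Lemma~\ref{lm:reordering} at the start is not quite legitimate: that lemma is stated for \emph{elementary} Smale cobordisms, and the cobordism here is not elementary (the two critical points are joined by an $X_0$-trajectory). This preliminary step is in any case unnecessary---once you have the tubular neighbourhood $U$ with its normal form, the birth-death deformation is local and does not care how far apart the critical values are.

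\textbf{(ii)} Obtaining $\phi_0$ \emph{exactly} in the split form $-\tfrac12|x|^2+g(y)$ on all of $U$ is more than a mere change of coordinates will give you. In Milnor's proof one first straightens the gradient-like vector field $X_0$ on $U$ to a product form (this is where the transversality hypothesis enters, via the framing compatibility you describe), and only afterwards alters $(X_0,\phi_0)$ together to perform the cancellation. Your outline has the right architecture---propagate the Morse-lemma frames along $\gamma$, use transversality to reconcile them, then run a one-parameter birth-death---but the normal form should be understood as a normal form for the pair $(X_0,\phi_0)$ obtained after a preliminary modification, not as something achieved by coordinates alone.
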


Here all the homotopies will be fixed on a neighbourhood of $\p_\pm
W$. The functions $\phi_t$ in Lemmas~\ref{lm:smooth-creation}
and~\ref{lm:smooth-cancellation} will be Morse except for one value
$t_0\in(0,1)$ where they have a birth-death type critical point. 
Here a {\em birth-death type} critical point of index $k-1$ at $t_0$ 
is described by the local model
$$
   \phi_t(x) = x_1^3\mp (t-t_0)x_1 - x_2^2-\dots-x_k^2 +
   x_{k+1}^2+\dots + x_m^2.
$$

\begin{problem}
Prove Lemmas~\ref{lm:reordering}, \ref{lm:moving}
and~\ref{lm:smooth-creation}. 
\end{problem}
\medskip

{\bf Modifications of $J$-convex Morse functions. }
Let us now state the analogues of the four lemmas for $J$-convex
functions. By a {\em Stein cobordism} $(W,J,\phi)$ we will mean a
complex cobordism $(W,J)$ with a $J$-convex Morse function
$\phi:W\to\R$ having $\p_\pm W$ as regular level sets. We will always
use the gradient vector field $\nabla_\phi\phi$ of $\phi$ with respect
to the metric $g_\phi=-dd^\C\phi(\cdot,J\cdot)$ to obtain a Smale
cobordism $(W,\nabla_\phi\phi,\phi)$. Note that in the following four
propositions the complex structure $J$ is always fixed. 

\begin{prop}[moving critical levels]\label{prop:reordering}
Let $(W,J,\phi_0)$ be an elementary Stein cobordism. Then there exists a
homotopy $(W,J,\phi_t)$ of elementary Stein cobordisms
which arbitrarily changes the ordering of the values of the critical
points. 
\end{prop}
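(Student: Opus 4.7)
The plan is to shift $\phi_0$ by time-dependent constants on small $J$-convex neighborhoods of the critical points, and to interpolate the shifts across the rest of $W$ using a parametric version of the $J$-convex foliation machinery from Section~\ref{sec:surr}; Corollaries \ref{cor:max}, \ref{cor:min}, and \ref{cor:foliation} will do the main work.

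By iteration, it suffices to realize a single transposition of critical values. I put $\phi_0$ into standard $J$-convex Morse normal form near each critical point $p_i$ (a shift of $\Psi_\st$ on a handle-shaped neighborhood), and fix small $J$-convex sublevel-set neighborhoods $U_i \ni p_i$; disjointness of the $U_i$ uses the elementary hypothesis. Choose smooth paths $c_i(t)$ of critical values realizing the desired reordering, and on each $U_i$ set $\phi_t := \phi_0 + c_i(t) - c_i(0)$. This restriction is $J$-convex and Morse with the critical point $p_i$ now at level $c_i(t)$.

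The remaining task is to extend $\phi_t$ to a $J$-convex function on $\Omega := W \setminus \bigcup \Int U_i$, equal to $\phi_0$ near $\p_\pm W$ and taking the values already prescribed on $\bigcup \p U_i$. Since $\Omega$ is critical-point-free and $(W,J,\phi_0)$ is elementary, $\phi_0|_\Omega$ is $J$-lc and its level sets furnish a $J$-convex foliation of $\Omega$. I would deform this foliation by translating its intersections with each $\p U_i$ vertically by $c_i(t) - c_i(0)$, keeping it fixed near $\p_\pm W$; since level sets of the local $\Psi_\st$-model stay $J$-convex under shifts of the value, this yields a smooth family of (possibly nontransverse) $J$-convex graphs interpolating between the prescribed boundary data. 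A parametric version of Corollary~\ref{cor:foliation} converts this family into a genuine $J$-convex foliation of $\Omega$ at each time $t$, and that foliation determines $\phi_t|_\Omega$ up to post-composition with a convex increasing function, chosen smoothly in $t$ so as to match $\phi_t$ on $\bigcup \p U_i$ and $\phi_0$ on $\p_\pm W$. Patching then gives the desired homotopy $\phi_t$.

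The hard part will be making the whole construction parametric in $t$: the $\smoothmax$ and smoothing procedures underlying Corollaries \ref{cor:max}, \ref{cor:min}, and \ref{cor:foliation} must be carried out in smooth families, and one has to pass continuously through any moment when two critical values coincide without destroying $J$-convexity of the foliation. The elementary hypothesis is essential throughout: it lets the $U_i$ be taken pairwise disjoint, so that each handle can be shifted independently, and the interpolation on $\Omega$ decouples into independent adjustments near each $\p U_i$.
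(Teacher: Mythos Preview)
Your approach differs from the paper's and has a genuine gap in the interpolation step.

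The paper's proof is a one-line application of Theorem~\ref{thm:model}. Because the cobordism is elementary, the stable manifolds $W_{p_i}^-$ are \emph{disjoint totally real disks reaching all the way down to $\p_-W$}. For each critical point $p_i$ separately, Theorem~\ref{thm:model} (applied to a holomorphic handle neighbourhood of the full disk $W_{p_i}^-$) deforms $\phi_0$ near $W_{p_i}^-$ to a $J$-lc function whose level set through $p_i$ is connected to a level set of $\phi_0$ arbitrarily close to $\p_-W$. Doing this for every $p_i$, with independently chosen target levels, realizes any ordering. No separate interpolation on a complement is needed: the model function of Theorem~\ref{thm:model} already agrees with $\phi_0$ outside a thin neighbourhood of $W_{p_i}^-$.

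Your scheme instead shifts $\phi_0$ by constants on small balls $U_i$ around the $p_i$ and then tries to fill in on $\Omega=W\setminus\bigcup\Int U_i$ via Corollary~\ref{cor:foliation}. The problem is that Corollary~\ref{cor:foliation} is formulated for a product $M\times[0,1]$ foliated by graphs, and $\Omega$ has no such structure: its boundary contains the spheres $\p U_i$, and the level sets of $\phi_0$ restricted to $\Omega$ acquire boundary on $\p U_i$ for values near $c_i$. More seriously, when you swap two critical values $c_p<c_q$, the closed level set $\{\phi_t=c\}$ for $c_p<c<c_q$ must change its diffeomorphism type as $t$ runs from $0$ to $1$ (from ``$\p_-W$ surgered at $p$'' to ``$\p_-W$ surgered at $q$''). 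A reparametrization or ``vertical translation'' of the fixed foliation by $\phi_0$-level sets cannot accomplish this; the leaves themselves must be moved past the critical points, which is exactly the surrounding construction of Theorem~\ref{thm:model}. In short, the work you defer to the interpolation on $\Omega$ is the work Theorem~\ref{thm:model} does, and it has to be done along the whole stable disk $W_{p_i}^-$, not just in a small ball around $p_i$.
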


\begin{prop}[moving attaching spheres]\label{prop:moving}
Let $(W,J,\phi_0)$ be a Stein cobordism and $p\in W$ a critical point
whose stable manifold $W_p^-(\phi_0)$ with respect to
$\nabla_{\phi_0}\phi_0$ intersects $\p_-W$ along an isotropic
sphere $S_0\subset \p_-W$. Then given any {\em isotropic} isotopy
$S_t\subset \p_-W$, $t\in[0,1]$, there exists a homotopy of Stein
cobordisms $(W,J,\phi_t)$ with fixed critical point $p$ such that the
stable manifold $W^-_p(\phi_t)$ intersects $\p_-W$ along $S_t$.
\end{prop}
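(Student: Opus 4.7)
The smooth proof of Lemma~\ref{lm:moving} deforms the gradient-like vector field $X$ while keeping $\phi$ fixed, but in the Stein setting $\nabla_{\phi_t}\phi_t$ is determined by $\phi_t$ via the metric $g_{\phi_t}=-dd^\C\phi_t(\cdot,J\cdot)$, so the motion of the attaching sphere must be realized by varying $\phi$ itself. My plan is to localize the variation in a tubular neighborhood of the totally real stable manifold $\Lambda_0:=W^-_p(\phi_0)$ running from $p$ to $S_0$, building a local model $J$-convex function with a prescribed totally real stable disk and patching via Corollary~\ref{cor:totally-real}.

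First, since the field of complex tangencies $\xi=T\p_-W\cap J(T\p_-W)$ is contact on $\p_-W$ (by $J$-convexity of $\p_-W$), the isotropic isotopy extension theorem gives an ambient contact isotopy $g_t:\p_-W\to\p_-W$ with $g_0=\id$ and $g_t(S_0)=S_t$. The disk $\Lambda_0$ is totally real: it is isotropic with respect to $\omega_{\phi_0}=-dd^\C\phi_0$ (the stable manifold of a Liouville flow is always isotropic, and $\nabla_{\phi_0}\phi_0$ is Liouville for $\omega_{\phi_0}$), and any $\omega$-isotropic real submanifold of a $J$-$\omega$-compatible manifold is automatically totally real. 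Using $g_t$ together with a collar of $\p_-W$, and keeping things fixed near $p$, extend $\Lambda_0$ to a smooth isotopy $\Lambda_t$ of totally real $k$-disks in $W$ through $p$ with $\Lambda_t\cap\p_-W=S_t$, meeting $\p_-W$ $J$-orthogonally along $S_t$.

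Next, on a tubular neighborhood $U_t$ of $\Lambda_t$, build a local model
\[
   \psi_t \;:=\; \phi_0|_{\Lambda_0}\circ\pi_t \;+\; a\, d_{\Lambda_t}^2,
\]
where $\pi_t:U_t\to\Lambda_t$ is the normal projection (so that for $t=0$ the first term extends $\phi_0|_{\Lambda_0}$) and $a>0$ is large. Since $d_{\Lambda_t}^2$ is $J$-convex near the totally real disk $\Lambda_t$ by example~(6) of Section~\ref{sec:conv}, for $a$ sufficiently large $\psi_t$ is $J$-convex with $p$ as its unique critical point of index $k$. Moreover $\nabla_{\psi_t}\psi_t$ is tangent to $\Lambda_t$ along $\Lambda_t$ (the normal contribution from $d_{\Lambda_t}^2$ vanishes there), and on $\Lambda_t$ it equals the transported gradient of $\phi_0|_{\Lambda_0}$, all of whose trajectories flow into $p$; hence $\Lambda_t$ is the stable manifold of $p$ for $\psi_t$. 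Arranging $\psi_t$ to agree with $\phi_0$ to first order along $\Lambda_t$, apply Corollary~\ref{cor:totally-real} to interpolate $\psi_t$ near $\Lambda_t$ with $\phi_0$ outside $U_t$, obtaining the desired smooth family $\phi_t$.

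The main obstacle is the construction of the local model $\psi_t$: stable manifolds depend globally on the function, so prescribing a totally real disk as a stable manifold requires exploiting both the totally real geometry of $\Lambda_t$ (so that $d_{\Lambda_t}^2$ provides a $J$-convex confining term in normal directions) and the interpolation rigidity of Corollary~\ref{cor:totally-real}, which allows localizing the change of $\phi$ to a neighborhood of $\Lambda_t$ without disturbing either $\phi_0$ outside or the Morse data near $p$. A secondary technical point is Step~2, where the extension of the contact isotopy to an isotopy of totally real disks through $p$ must be arranged smoothly in $t$ while keeping the germ at $p$ fixed; this is a relative $h$-principle-type statement for totally real embeddings with prescribed boundary and interior constraints.
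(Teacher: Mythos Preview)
Your outline has the right architecture, but the interpolation step contains a genuine gap. To apply Corollary~\ref{cor:totally-real} you need $\psi_t$ and $\phi_0$ to agree to \emph{first order} along $\Lambda_t$. Your local model $\psi_t$ is built so that $\nabla_{\psi_t}\psi_t$ is tangent to $\Lambda_t$; in the essential case $k=n$ (to which the paper reduces) this forces $d\psi_t$ to vanish on $J(T\Lambda_t)$. But $d\phi_0$ does \emph{not} vanish on $J(T\Lambda_t)$ for $t>0$: the level sets of $\phi_0$ are $J$-orthogonal to the original stable disk $\Lambda_0=W_p^-(\phi_0)$, not to $\Lambda_t$. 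These two requirements are therefore incompatible, and you cannot ``arrange $\psi_t$ to agree with $\phi_0$ to first order along $\Lambda_t$'' without destroying the very property that makes $\Lambda_t$ the stable manifold of $\psi_t$.

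The paper resolves this by inserting an additional step before the interpolation: it first deforms $\phi_0$ to an intermediate $J$-convex function whose level sets are $J$-orthogonal to $D_t$, by ``turning'' each level set $\Sigma$ about its contact hyperplane field $\xi$ along the isotropic sphere $\Sigma\cap D_t$. Only after this preparation do the intermediate function and the local model match to first order along $D_t$, and Corollary~\ref{cor:totally-real} applies. This level-set-turning construction is the crux of the proof and requires a careful Levi-form estimate; it also explains why the disks $D_t$ must be built to meet \emph{every} level set of $\phi_0$ (not merely $\p_-W$) in an isotropic sphere. That condition is simultaneously what guarantees total reality of $D_t$---your argument for total reality of $\Lambda_t$ via $\omega_{\phi_0}$-isotropy works only for $\Lambda_0$ (a stable manifold of a Liouville field), and extending a boundary contact isotopy through a collar does not by itself produce totally real disks.
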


\begin{prop}[creation of critical points]\label{prop:creation}
Let $(W,J,\phi_0)$ be a Stein cobordism without critical points. Then
for any $1\leq k\leq\dim_\C W$ and any $p\in\Int W$ there exists a 
Stein homotopy $(W,J,\phi_t)$, $t\in[0,1]$, fixed outside a
neighbourhood of $p$, which creates a
pair of critical points of index $k-1$ and $k$ connected by a unique
trajectory of $\nabla_{\phi_1}\phi_1$ along which the stable and
unstable manifolds intersect transversely.  
\end{prop}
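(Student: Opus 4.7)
The plan is to produce the homotopy by a carefully scaled, compactly-supported perturbation of $\phi_0$ near $p$, modelled on the standard Morse birth-death. Since $J$ is integrable, pick holomorphic coordinates $z_1,\dots,z_n$ centred at $p$; by a $\C$-linear change of coordinates and subtraction of a constant, I assume $\phi_0(p)=0$ and $d\phi_0(p)=c\,d(\Re z_1)|_p$ with $c>0$.

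On $\C^n$ I consider the separable polynomial model
\[
   \Psi_\mu(w) \;:=\; w_1^3 - \mu\,w_1 + A\sum_{j=1}^{n}v_j^{\,2} \;-\sum_{j=2}^{k}u_j^{\,2}\;+\sum_{j=k+1}^{n}u_j^{\,2},
\]
with $w_j = u_j + iv_j$ and $A>3$ fixed. Because $\Psi_\mu$ is separable, its Levi form is diagonal with entries $\tfrac14(u_j''+v_j'')$, all positive on the half-space $\{u_1>-A/3\}$. For $\mu<0$, $\Psi_\mu$ has no critical points; for $\mu>0$ it has exactly two, at $(\pm\sqrt{\mu/3},0,\dots,0)$, of Morse indices $k-1$ and $k$ respectively, joined by a unique gradient trajectory along the $u_1$-axis with transverse intersection of stable and unstable manifolds, because the Hessian eigenspaces at both critical points are coordinate-aligned and differ only in the sign of the $u_1$-eigenvalue.

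Fix a cutoff $\chi:\C^n\to[0,1]$ with $\chi\equiv 1$ on $\{|w|\leq 1/2\}$ and supported in $\{|w|\leq 1\}$, a smooth function $\mu:[0,1]\to\R$ with $\mu(0)<1<\mu(1)$ crossing $1$ transversely at some $t_0\in(0,1)$, and a scale $\delta>0$ to be chosen. Define
\[
   \phi_t(z) \;:=\; \phi_0(z) \;+\; c\delta\cdot\chi(z/\delta)\,\Psi_{\mu(t)}(z/\delta),
\]
extended by $\phi_t=\phi_0$ outside $\{|z|<\delta\}$. Inside $\{|z|<\delta/2\}$, where $\chi\equiv 1$, the critical-point equation of $\phi_t$ reduces in the $x_1$-direction to $3(x_1/\delta)^2 = \mu(t)-1 + O(\delta/c)$ with the other coordinates of order $\delta^2/c$; hence $\phi_t$ has no critical points for $\mu(t)<1$ and exactly two for $\mu(t)>1$, located near $\bigl(\pm\delta\sqrt{(\mu(t)-1)/3},0,\dots,0\bigr)$, of Morse indices $k-1$ and $k$ (the perturbation's Hessian of order $c/\delta^2$ dominates $\Hess\,\phi_0$ at these points). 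The unique connecting trajectory and the transversality of its stable and unstable manifolds are then inherited from the explicit model, and the degenerate critical point at $t=t_0$ is of birth-death type by the leading-order cubic form in $x_1$.

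The main technical obstacle is the scaling balance that guarantees $J$-convexity of $\phi_t$ on all of $W$. The perturbation contributes to $\omega_{\phi_t}$ a positive interior term of size $c/\delta$ coming from $\chi\,dd^\C\Psi_{\mu(t)}$, together with indefinite cross terms of the same size supported on the annulus $\{\delta/2\leq|z|\leq\delta\}$, coming from the factors $d\chi$ and $dd^\C\chi$. One must choose $\delta$ large enough that the Levi-form lower bound $\sigma_0$ of $\phi_0$ near $p$ dominates these indefinite cross terms (forcing $\delta$ of order $c/\sigma_0$), and yet small enough that the perturbation's Hessian of order $c/\delta^2$ dominates $\Hess\,\phi_0$ at the critical points (forcing $\delta^2$ of order $c/\|\Hess\,\phi_0\|$). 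If these two constraints are initially incompatible, one first replaces $\phi_0$ by $f\circ\phi_0$ for a convex increasing $f$ with sufficiently large $f''$, which inflates $\sigma_0$ without changing level sets or critical points and opens the required $\delta$-window.
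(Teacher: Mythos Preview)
Your approach is different from the paper's and has a genuine gap in the final scaling argument. First a minor correction: the Hessian of your perturbation $c\delta\,\chi(z/\delta)\Psi_{\mu(t)}(z/\delta)$ is of order $c/\delta$, not $c/\delta^2$. With this fixed, your two constraints read $\delta\gtrsim c/\sigma_0$ (for $J$-convexity on the annulus) and $\delta\lesssim c/\|\Hess\,\phi_0\|$ (for Morse-index control), so the window is nonempty only if $\sigma_0\gtrsim\|\Hess\,\phi_0\|$ with constants you do not control. The crucial problem is your proposed fix: replacing $\phi_0$ by $f\circ\phi_0$ with large $f''$ does \emph{not} uniformly inflate $\sigma_0$. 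The extra Levi-form term $f''\bigl((d\phi_0)^2+(d^\C\phi_0)^2\bigr)$ vanishes on the complex hyperplane $\ker d\phi_0\cap\ker d^\C\phi_0$, so the new minimum $\sigma_0$ is only $f'$ times the old one---exactly the same factor by which $c$ scales, leaving $c/\sigma_0$ unchanged. Meanwhile the $f''c^2$ contribution to the real Hessian \emph{increases} $\|\Hess\,\phi_0\|$ and therefore shrinks the upper end of your $\delta$-window. The fix makes the tension worse, not better, and the argument does not close.

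The paper sidesteps this difficulty entirely. Rather than a cut-off perturbation, it first picks a small isotropic $(k-1)$-sphere $S$ through $p$ in the level set $\phi_0^{-1}(c)$ and lets $Z$ be the totally real $k$-cylinder swept out by $S$ under the backward gradient flow, identified with $\{r=0,\ 1/2\le R\le 1\}$ in the standard handle. A variant of Theorem~\ref{thm:model} then produces a Stein homotopy from $\phi_0$ to a $J$-convex $\phi_1$ one of whose level sets surrounds $Z$. Now take the smooth birth--death family $\beta_t:Z\to\R$ from Lemma~\ref{lm:smooth-creation} and set $\psi_t:=\beta_t+Br^2$ near $Z$; here $\beta_t$ is fixed and $B$ can be chosen as large as needed to make $\psi_t$ $J$-convex, so there is no competing scale. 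Finally one glues by $\phi_t:=\smoothmax(\psi_t,f_t\circ\phi_1)$: the maximum construction of Corollary~\ref{cor:max} preserves $J$-convexity without introducing the indefinite cutoff cross-terms that cause your scaling problem.
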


\begin{prop}[cancellation of critical points]
\label{prop:cancellation}
Suppose that a Stein cobordism $(W,J,\phi_0)$ contains exactly two
critical points of index $k-1$ and $k$ which are connected by a unique
trajectory of $\nabla_{\phi_0}\phi_0$ along
which the stable and unstable manifolds intersect transversely. 
Then there exists a Stein homotopy
$(W,J,\phi_t)$, $t\in[0,1]$, which kills the critical points, so the
cobordism $(W,J,\phi_1)$ has no critical points. 
\end{prop}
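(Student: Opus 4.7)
The strategy is to reduce the cancellation to an explicit model in a standard holomorphic chart, and to construct an honest one-parameter family of $i$-convex functions in that chart which realizes the Morse-theoretic cancellation. Throughout, the complex structure $J$ stays fixed, and the homotopy modifies only the function $\phi$.

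First, using Proposition~\ref{prop:reordering} I would bring the two critical values $\phi_0(p)$ and $\phi_0(q)$ arbitrarily close together, so that the cancelling configuration $W_p^+\cap W_q^-$ (the unique connecting trajectory) sits inside a thin slab $W':=\{c_-\leq \phi_0\leq c_+\}$ which contains no other critical points. Since $W\setminus W'$ carries a nowhere-vanishing gradient, it suffices to produce the Stein homotopy on $W'$ with fixed boundary behaviour. Next, I would consider the intermediate level $N:=\{\phi_0=c\}$ for some $c$ strictly between $\phi_0(p)$ and $\phi_0(q)$; inside the contact manifold $(N,\xi)$ the attaching sphere $S_q$ of the $k$-handle meets the belt sphere $B_p$ of the $(k-1)$-handle transversely in one point. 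Using Proposition~\ref{prop:moving}, I would isotope $S_q$ through isotropic isotopies to a standard position in a Darboux chart where $S_q$ and $B_p$ meet at the origin in the simplest possible way, so that the entire closure of $W_p^+\cup W_q^-$ lies in a holomorphic ball where $J$ is standard.

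The heart of the argument is the construction of a local $i$-convex model on a compact domain $H\subset\C^n$. At $t=0$, I would build an $i$-convex function $\Psi_0$ on $H$ having exactly two critical points of indices $k-1$ and $k$ connected by a single transverse gradient trajectory, obtained essentially by stacking two copies of the standard handle model of Theorem~\ref{thm:model}. At $t=1$, I want a critical-point-free $i$-convex function $\Psi_1$ on $H$ which agrees with $\Psi_0$ near $\p H$. The one-parameter family $\Psi_t$ must be $i$-convex for every $t$ and pass through a single birth-death degenerate critical point at some intermediate time $t_0$. I would construct it as a parametric version of Theorem~\ref{thm:model}: produce a smooth one-parameter family of $i$-convex hypersurfaces, written as graphs, by solving parametrized versions of Struwe's inequality~\eqref{eq:shape-above} and Struwe's equation~\eqref{eq:Struwe}, apply parametrized maximum/minimum constructions (Corollaries~\ref{cor:max} and~\ref{cor:min}), and then appeal to Corollary~\ref{cor:foliation} to assemble the resulting hypersurfaces into $i$-convex foliations, i.e., into the level sets of the desired functions $\Psi_t$.

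Finally, I would glue this local model to the actual cobordism. After a real analytic approximation and a biholomorphic extension of the chart in the second paragraph, the given function $\phi_0$ and the pull-back of the model $\Psi_0$ are both $J$-convex and agree to first order along the totally real core cell $W_p^+\cup W_q^-$, so by Corollary~\ref{cor:totally-real} one can interpolate between them on a neighbourhood of that core while preserving $J$-convexity; the same interpolation works in families between $\phi_0$ and $\Psi_t$. This yields the Stein homotopy $(W,J,\phi_t)$ with $\phi_1$ free of critical points on $W'$, hence globally on $W$. The main obstacle will be the third paragraph: producing a one-parameter $i$-convex family that actually passes through a birth-death singularity is strictly harder than the construction behind Theorem~\ref{thm:model}, because the topology of the level sets changes, the simple sufficient condition~\eqref{eq:shape-above} degenerates at the birth-death moment, and one is forced to invoke the sharp necessary-and-sufficient differential condition for $i$-convexity of a graph, carefully matched to the isotropic attaching data supplied by Proposition~\ref{prop:moving}.
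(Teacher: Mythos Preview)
Your outline is a plausible strategy, but it diverges from the paper's proof at the key step and leaves exactly the hard part undone. Two remarks.

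\textbf{The core object is wrong.} The relevant totally real cell is not $W_p^+\cup W_q^-$. If $q$ is the index-$k$ point and $p$ the index-$(k-1)$ point, then $W_p^+$ has dimension $2n-k+1>n$ and is never totally real; its closure also escapes to $\partial_+W$, so it cannot sit in a holomorphic ball. The correct object is the closure $\Delta$ of the $k$-dimensional stable disk $W_q^-$: a $k$-dimensional totally real half-disk whose upper boundary $\partial_+\Delta$ is the $(k-1)$-dimensional stable disk of $p$ and whose lower boundary lies on $\partial_-W$. Everything should be phrased in terms of $\Delta$.

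\textbf{The paper bypasses your main obstacle entirely.} You correctly flag the construction of an $i$-convex one-parameter family $\Psi_t$ through a birth-death as the crux, and you propose to attack it head-on via parametrized Struwe-type ODEs. The paper does something much simpler and quite different. It never builds an $i$-convex birth-death model from scratch. Instead it first deforms $\phi_0$ (keeping both critical points!) in three steps---two applications of Theorem~\ref{thm:model} and one minimum construction---until some level set $\Sigma_3$ surrounds the half-disk $\Delta$ in an arbitrarily thin tube. Only then does the cancellation occur, and it is almost trivial: take \emph{any} smooth Morse cancellation $\beta_t$ on the disk $\Delta'$ (Lemma~\ref{lm:smooth-cancellation}, no convexity required), set $\psi_t:=\beta_t+Br^2$ with $r$ the normal distance to $\Delta'$, and observe that for $B$ large the $Br^2$ term alone forces $J$-convexity of $\psi_t$ for every $t$, birth-death moment included. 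The family $\phi_t:=\smoothmax(\psi_t,f_t\circ\phi_3)$ then globalizes this; the tightness of $\Sigma_3$ is what guarantees the smoothmax introduces no new critical points. The genuinely hard analysis is not the birth-death but the third surrounding step: connecting the dumbbell-shaped $\Sigma_2$ to the thin $\Sigma_3$ through $J$-convex hypersurfaces.

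So your proposal is not wrong in spirit, but the route you chose forces you to solve a problem (an explicit $i$-convex birth-death model with prescribed boundary behaviour) that the paper deliberately avoids via the ``squeeze the level set, then add $Br^2$'' trick. If you pursue your route you will likely end up reinventing the surrounding steps anyway, since matching $\Psi_t$ to $\Psi_0$ near $\partial H$ is essentially the same difficulty.
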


Again, all the homotopies will be fixed on a neighbourhood of $\p_\pm
W$, up to composition of the $J$-convex functions with some convex
increasing function $\R\to\R$. 
The statements are precise analogues of those in the
smooth case, with one notable difference: in
Proposition~\ref{prop:moving} we require the isotopy $S_t$ to be {\em
  isotropic}. This difference, and the lack of a 1-parametric
$h$-principle for Legendrian embeddings, is largely responsible for all
symplectic rigidity phenomena on Stein manifolds. However, in the {\em
  subcritical case} $\ind(p)=k<n$ we have an $h$-principle stating
that any smooth isotopy $S_t$ starting at an isotropic embedding $S_0$
can be $C^0$-approximated by an isotropic isotopy starting at
$S_0$. With this, the proof of the $h$-cobordism theorem goes through
for $J$-convex functions and we obtain

\begin{theorem}[Stein $h$-cobordism
  theorem]\label{thm:Stein-h-cobordism} 
Let $(W,J,\phi)$ be a {\em subcritical} Stein $h$-cobordism. Suppose
that $\dim_\C W\geq 3$.  
Then $W$ carries a $J$-convex function without critical points and
constant on $\p_\pm W$. 
\end{theorem}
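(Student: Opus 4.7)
The plan is to mimic Smale's proof of the classical $h$-cobordism theorem (Theorem~\ref{thm:h-cobordism}), replacing each of the four geometric moves on Morse functions (Lemmas~\ref{lm:reordering}--\ref{lm:smooth-cancellation}) by its Stein analogue from Propositions~\ref{prop:reordering}--\ref{prop:cancellation}. Three of these Stein propositions are essentially verbatim translations of their smooth counterparts, but Proposition~\ref{prop:moving} requires that the prescribed ambient isotopy of the attaching sphere be \emph{isotropic}, not merely smooth. This is the sole obstruction to a line-by-line Steinification of Smale's argument.

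The subcriticality hypothesis is precisely the ingredient that removes this obstruction. By assumption, every critical point of $\phi$ has index $k<n$, so each attaching sphere is a $(k-1)$-sphere embedded isotropically in the $(2n-1)$-dimensional contact manifold $\p_-W$, with $k-1<n-1$. In this strict subcritical range, Gromov's $h$-principle for isotropic embeddings implies that any smooth ambient isotopy of such a sphere can be $C^0$-approximated by an isotropic one. Hence every smooth isotopy employed in Smale's proof can be realized isotropically, and Proposition~\ref{prop:moving} becomes as effective as its smooth counterpart Lemma~\ref{lm:moving}.

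With this observation, Smale's algorithm transfers directly. First apply Proposition~\ref{prop:reordering} to arrange critical points in order of increasing index. Since $\dim_\R W = 2n \geq 6$ and $W,\p_\pm W$ are simply connected, the acyclicity condition $H_*(W,\p_-W;\Z)=0$ permits Smale's combinatorial scheme to single out pairs of index-$k$ and index-$(k+1)$ critical points whose attaching $(k+1)$-spheres can, after a Whitney-trick isotopy inside an intermediate level set, be made to meet the belt spheres of the paired $k$-handles transversely in a single point; these isotopies are then lifted to isotropic ones via the subcritical $h$-principle and realized by Proposition~\ref{prop:moving}. Proposition~\ref{prop:cancellation} then eliminates each such pair. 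Whenever the algorithm requires auxiliary cancelling pairs, these are supplied by Proposition~\ref{prop:creation}; since the indices involved remain at most $n-1$ and $n\geq 3$, such creations stay within the subcritical range.

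The main obstacle to verify is the combinatorial bookkeeping: one must check at every step that the ambient isotopies demanded by the smooth algorithm concern only attaching spheres of dimension strictly less than $n-1$, so that the isotropic $h$-principle applies, and that no creation move introduces a handle of index $\geq n$. Since $n\geq 3$ and Smale's algorithm applied to a handle decomposition supported in degrees $\leq n-1$ never produces higher-index handles, both conditions hold automatically, and the argument of the classical $h$-cobordism theorem goes through to produce the desired $J$-convex function without critical points.
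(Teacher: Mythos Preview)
Your proposal is correct and follows exactly the approach sketched in the paper: replace Lemmas~\ref{lm:reordering}--\ref{lm:smooth-cancellation} by their Stein analogues Propositions~\ref{prop:reordering}--\ref{prop:cancellation}, and invoke the subcritical $h$-principle for isotropic embeddings to upgrade the smooth isotopies required by Proposition~\ref{prop:moving} to isotropic ones. The paper's own argument is just the one-sentence remark preceding the theorem that ``with this, the proof of the $h$-cobordism theorem goes through for $J$-convex functions,'' and you have faithfully unpacked that sentence.
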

 
Further implications of these results will be discussed in
Section~\ref{sec:flex}. The remainder of this section is devoted to
the proofs of Propositions~\ref{prop:reordering}
  to~\ref{prop:cancellation}. 

\begin{proof}[Proof of Proposition~\ref{prop:reordering}]
This is an immediate consequence of the $J$-convex model functions
constructed in Section~\ref{sec:ex}: Since the cobordism is
elementary, the stable manifolds of the critical points are disjoint
embedded disks. For each critical point $p$,
Theorem~\ref{thm:model} allows us to deform $\phi_0$ near $W_p^-$
such that for the new $J$-lc function the level set containing $p$
is connected to a level set of $\phi_0$ slightly above $\p_-W$. Now we
perform this operation for each critical point and choose the level
sets near $\p_-W$ to achieve any given ordering. 
\end{proof}

\begin{proof}[Proof of Proposition~\ref{prop:moving}]
Let $k:=\ind(p)\leq n$. We identify level sets of $\phi_0$ near
$\p_-W$ via Gray's theorem. Then we construct an isotopy of embedded
$k$-disks $D_t\subset W$ such that $D_0=W_p^-$, $D_t$ agrees with
$W_p^-$ near $p$, $\p D_t=S_t$, and $D_t$ intersects all level sets of
$\phi$ below $c:=\phi(p)$ transversely in isotropic
$(k-1)$-spheres; see Figure~\ref{fig:attaching-spheres}. 
\begin{figure}
\centering
\includegraphics{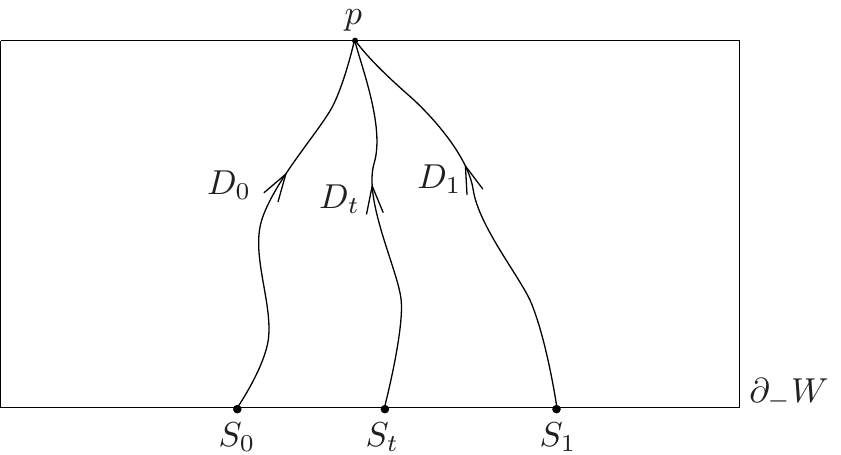}
\caption{Moving attaching spheres by isotropic isotopies.}
\label{fig:attaching-spheres}
\end{figure}
The last condition implies that $D_t$ is totally
real. If $k<n$ we can further extend $D_t$ to a totally real
embedding of $D^k\times D^{n-k}_\eps$ intersecting level sets
transversely in isotropic submanifolds, so it suffices to consider the
case $k=n$. To conclude the proof, we will construct $J$-convex
functions $\phi_t$ which agree with $\phi_0$ near $p$ and whose
gradient $\nabla_{\phi_t}\phi_t$ is tangent to $D_t$. This is done in
two steps. 

In the first step we construct $J$-convex functions $\psi_t$ whose
level sets below $c$ are $J$-orthogonal to $D_t$. To do this, consider
some level set $\Sigma$ of $\phi_0$ intersecting $D_t$ in the
isotropic submanifold $\Lambda_t$. Let $\xi$ be the induced contact
structure on $\Sigma$. We deform $\Sigma$ near $\Lambda_t$ to
a new hypersurface $\Sigma'$ which agrees with $\Sigma$ outside a
neighbourhood of $\Lambda_t$, intersects $D_t$ $J$-orthogonally in
$\Lambda_t$, and satisfies $\xi\subset T\Sigma'$ along $\Lambda_t$ (so
we ``turn $\Sigma$ around $\xi$ along $\Lambda_t$''); see
Figure~\ref{fig:turning-levels}.  
\begin{figure}
\centering
\includegraphics{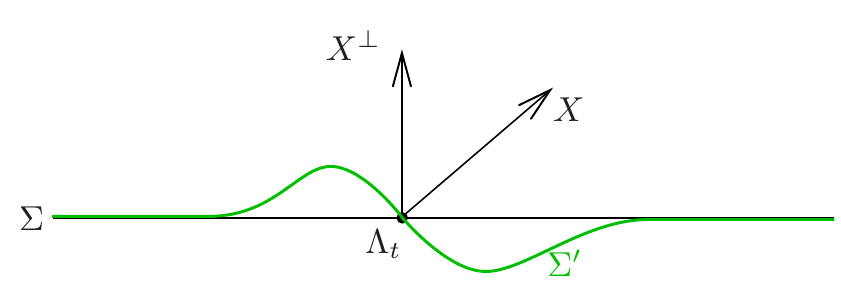}
\caption{Turning a $J$-convex hypersurface along an isotropic submanifold.}
\label{fig:turning-levels}
\end{figure}
A careful estimate of the Levi form shows that $\Sigma'$ can be made
$J$-convex. Deforming all level sets in this way leads to a family of
$J$-convex hypersurfaces, which by Corollary~\ref{cor:foliation} can
be turned into a foliation and thus into level sets of a $J$-lc
function. 

For the second step, consider the $J$-convex functions $\psi_t$ from
the first step whose level sets below $c$ are $J$-orthogonal to
$D_t$. It is not hard to write down in a local model a $J$-convex
function $\vartheta_t$ near $D_t$ which agrees with $\psi_t$ on $D_t$,
whose level sets are $J$-orthogonal to $D_t$, and whose
gradient $\nabla_{\vartheta_t}\vartheta_t$ is tangent to $D_t$. Now 
Corollary~\ref{cor:totally-real} provides the desired function
$\phi_t$ which coincides with $\psi_t$ outside a neighbourhood of
$D_t$ and with $\vartheta_t$ in a smaller neighborhood of $D_t$.   
\end{proof}

\begin{proof}[Proof of Proposition~\ref{prop:cancellation}]
Let $(W,J,\phi_0)$ be a Stein cobordism with exactly two
critical points $p$, $q$ of index $k$, $k-1$ connected by a unique
trajectory of $\nabla_{\phi_0}\phi_0$ along
which the stable and unstable manifolds intersect transversely. 
Set $a:=\phi_0|_{\p_-W}$, $b:=\phi_0(q)$ and $c:=\phi_0(p)$. 

\begin{problem}
In the situation above, suppose that $\phi_0$ is quadratic
in some holomorphic coordinates near $p$ and $q$. 
Then the closure of $W_p^-$ is an embedded $k$-dimensional half-disk
$\Delta\subset W$ with lower boundary $\p_-\Delta=\Delta\cap\p_-W$ and
upper boundary $\p_+\Delta=W_q^-$; see Figure~\ref{fig:cancellation}.
\end{problem}

\begin{figure}
\centering
\includegraphics{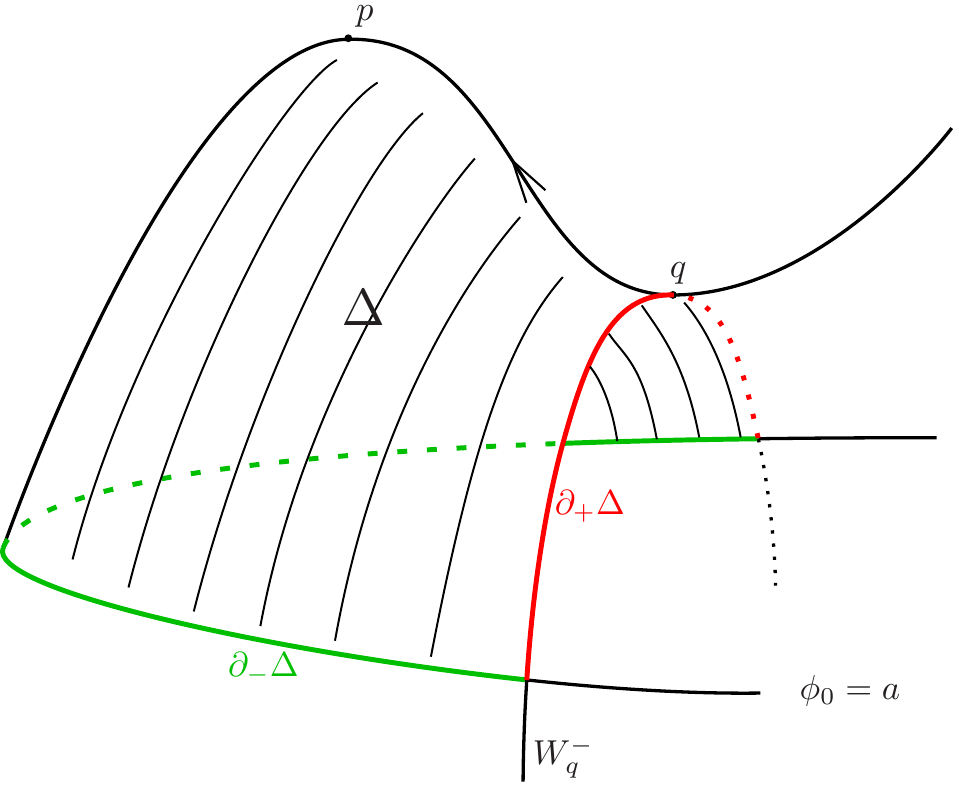}
\caption{The half-disk $\Delta$.}
\label{fig:cancellation}
\end{figure}

We will now deform the function $\phi_0$ in 4 steps. The first 3 steps
modify $\phi_0$ outside $\Delta$, without affecting its critical points,
to make some level set closely surround $\Delta$; the actual
cancellation happens in the last step.

{\bf First surrounding. }
First we apply Theorem~\ref{thm:model} to the $(k-1)$-disk
$\p_+\Delta$ to deform $\phi_0$ to a $J$-lc function $\phi_1$ such
that some level set $\Sigma_1=\{\phi_1=c_1\}$ closely surrounds
$\p_+\Delta$ as shown in Figure~\ref{fig:dumbell-2}. 
\begin{figure}
\centering
\includegraphics{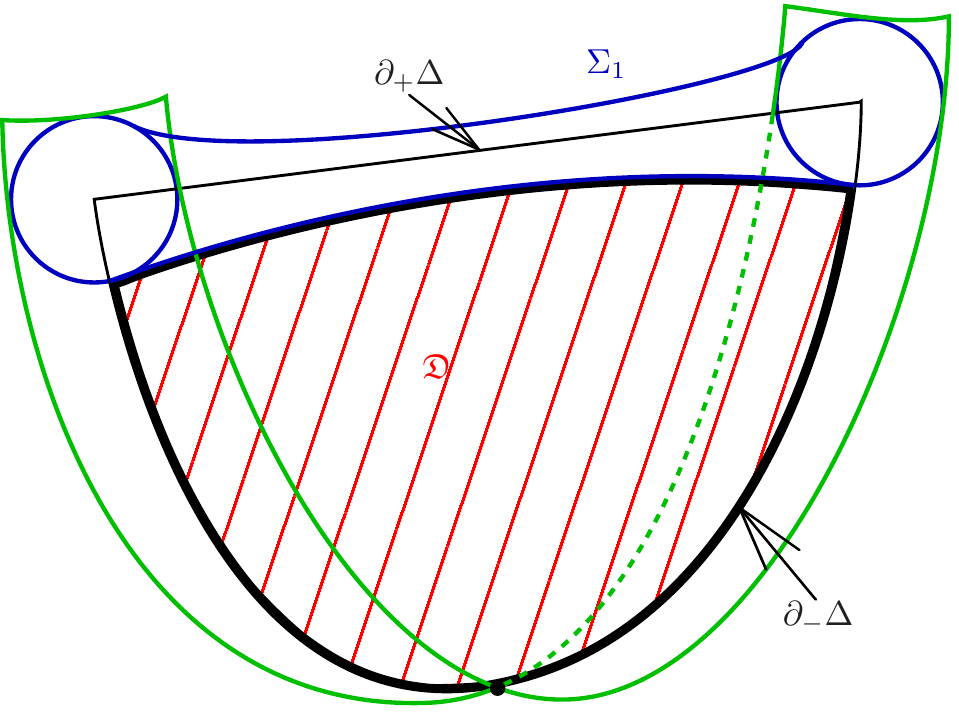}
\caption{The first surrounding hypersurface $\Sigma_1$ and the disk
  $\mathfrak{D}$.} 
\label{fig:dumbell-2}
\end{figure}

{\bf Second surrounding. }
Next we apply Theorem~\ref{thm:model} to the $k$-disk
$\mathfrak{D}:=\Delta\cap\{\phi_1\geq c_1\}$ to deform $\phi_1$ to a
$J$-lc function $\phi_2$ such that some level set
$\Sigma_2=\{\phi_2=c_2\}$ closely surrounds $\Delta$ as shown in
Figure~\ref{fig:dumbell-2}. Note that a cross-section of $\Sigma_2$
will have a dumbell-like shape as in Figure~\ref{fig:dumbell-1}, where
$x=(x_1,\dots,x_k)$ and $u=(x_{k+1},\dots,x_n,y_1,\dots,y_n)$.  
\begin{figure}
\centering
\includegraphics{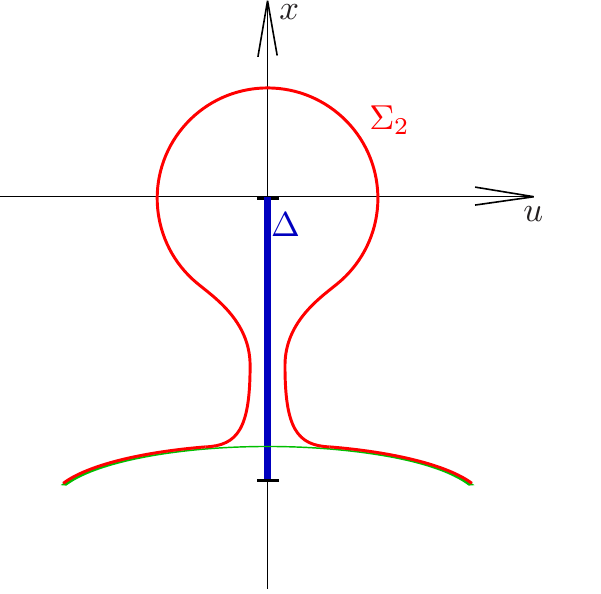}
\caption{The dumbell-shaped cross-section of the second surrounding
  hypersurface $\Sigma_2$.}
\label{fig:dumbell-1}
\end{figure}

{\bf Third surrounding. }
On the other hand, we can construct another hypersurface $\Sigma_3$
surrounding $\Delta$ as follows: 
Restrict a very thin model hypersurface $\Sigma$
provided by Theorem~\ref{thm:model} to a neighbourhood of the lower
half-disk $\{r=0,R\leq 1,y_k\leq 0\}$ in $\C^n$, implant it onto a
neighbourhood of $\Delta$ in $W$, and apply the minimum construction
in Corollary~\ref{cor:min} to this hypersurface and $\Sigma_2$.  
The resulting $J$-convex hypersurface $\Sigma_3$ is shown in
Figure~\ref{fig:dumbell-3}. 
\begin{figure}
\centering
\includegraphics{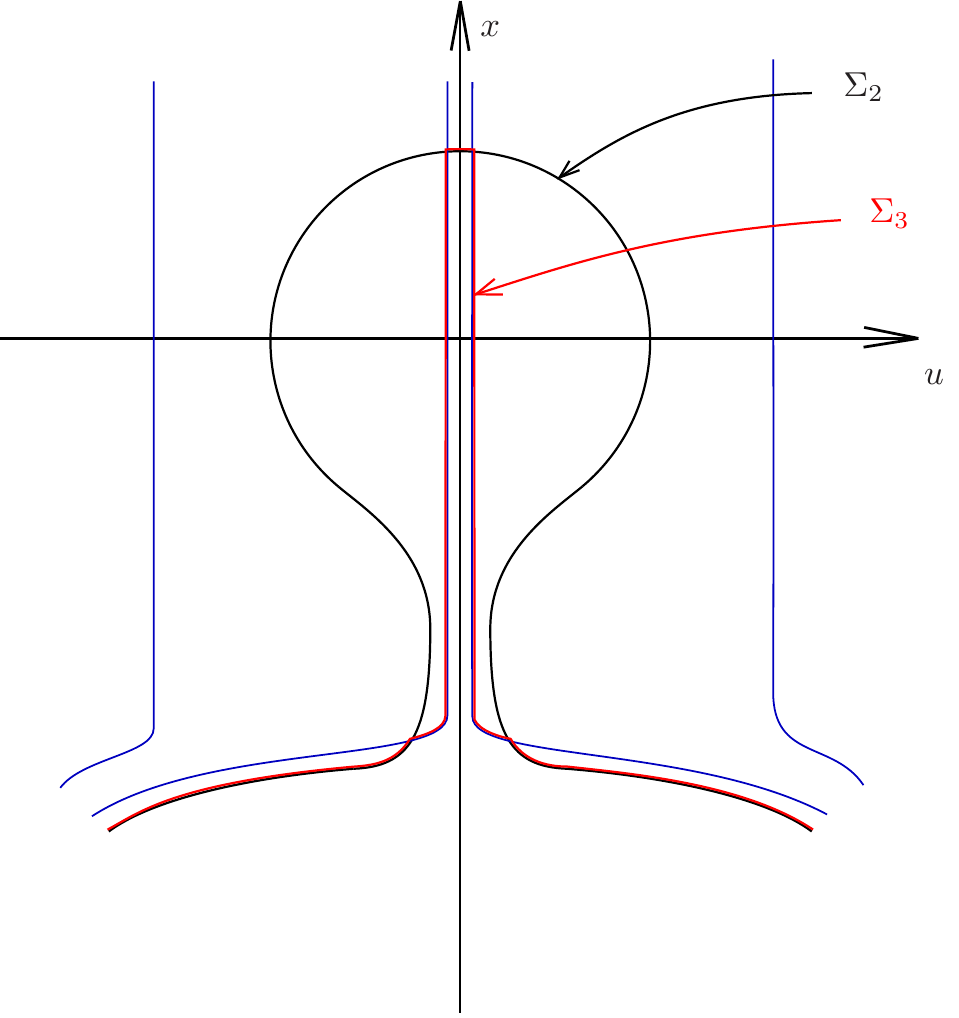}
\caption{The third surrounding hypersurface $\Sigma_3$.}
\label{fig:dumbell-3}
\end{figure}
The most difficult part is now to connect $\Sigma_3$ to $\Sigma_2$ by
a family of $J$-convex hypersurfaces. Once this is done, we can apply 
Corollary~\ref{cor:foliation} to deform $\phi_2$ to a $J$-lc function
$\phi_3$ having $\Sigma_3$ as a level set. 

{\bf The cancellation. }
Let us extend $\Delta$ across $\p_+\Delta$ to a slightly larger
half-disk $\Delta'$, still surrounded by $\Sigma_3$, so that the
critical points $p,q$ lie in the interior of $\Delta'$, and
$\nabla_{\phi_3}\phi_3$ is inward pointing along $\p_-\Delta'$ and
outward pointing along $\p_+\Delta'$. By
Lemma~\ref{lm:smooth-cancellation} there exists a family of smooth
functions $\beta_t:\Delta'\to\R$, $t\in[3,4]$, fixed near $\p\Delta'$,
such that $\beta_3=\phi_3|_{\Delta'}$ and $\beta_4$ has no critical
points. Identifying $\Delta'$ with the lower half-disk in the standard
handle, we can pick a large constant $B>0$ such that the functions
$\psi_t:=\beta_t+Br^2$ near $\Delta'$ are $J$-convex for all $t\in[3,4]$. 

After an application of Corollary~\ref{cor:totally-real}, we may
assume that $\psi_3=\phi_3$ near $\Delta'$. We can choose convex
increasing functions $f_t:\R\to\R$ with $f_3=\id$ such that for
$t\in[3,4]$ the $J$-convex function
$\phi_t:=\smoothmax(\psi_t,f_t\circ\phi_3)$ agrees with
$f_t\circ\phi_3$ in the region outside of $\Sigma_3$ and with $\psi_t$ near
$\Delta'$. In particular, $\phi_4$ has no critical points (for
this one needs to check that the maximum constructon does not create new
critical points outside $\Delta'$). Hence 
$(W,J,\phi_{4t})$, $t\in[0,1]$, is the desired Stein homotopy and
Proposition~\ref{prop:cancellation} is proved.  
\end{proof}

\begin{proof}[Proof of Proposition~\ref{prop:creation}]
The proof is similar to that of Proposition~\ref{prop:cancellation}
but much simpler. Set $a:=\phi_0|_{\p_-W}$ and $c:=\phi_0(p)$. 
Pick an isotropic embedded $(k-1)$-sphere $S$ through $p$ in the
level set $\phi_0^{-1}(c)$ and let $Z\subset W$ be the totally real
cylinder swept out by $S$ under the backward gradient flow of
$\phi_0$. We identify $Z$ with the cylinder $\{r=0,1/2\leq R\leq
1\}$ in the standard handle. A slight modification of
Theorem~\ref{thm:model} yields a family of $J$-convex functions
$\phi_t:W\to\R$, $t\in[0,1]$, such that some level set $\Sigma_1$ of
$\phi_1$ surrounds $Z$ in $W$. 

By Lemma~\ref{lm:smooth-creation} there exists a family of smooth
functions $\beta_t:Z\to\R$, $t\in[1,2]$, fixed near $\p Z$,
such that $\beta_1=\phi_1|_{Z}$ and $\beta_2$ has exactly two critical
points of index $k-1$ and $k$ connected by a unique gradient
trajectory along which the stable and unstable manifolds intersect
transversely. As above, we can pick a large constant $B>0$ such that
the functions $\psi_t:=\beta_t+Br^2$ near $Z$ are $J$-convex for all
$t\in[1,2]$ and set $\phi_t:=\smoothmax(\psi_t,f_t\circ\phi_1)$,
$t\in[1,2]$, to obtain the desired family $\phi_{2t}$, $t\in[0,1]$. 
\end{proof}

\section{Flexibility of Stein structures}\label{sec:flex}

In this section we study the question when two Stein structures
on the same manifold can be connected by a Stein homotopy.

{\bf Stein homotopies. }
Let us first carefully define the notion of a Stein homotopy. 
Consider first a smooth family (with respect to the
$C^\infty_\loc$-topology) of exhausting functions
$\phi_t:V\to\R$, $t\in[0,1]$, on a manifold $V$. We call it a {\em
  simple Morse homotopy} if there exists a family
of smooth functions $c_1<c_2<\dots$ on the interval $[0,1]$ such that
for each $t\in[0,1]$, $c_i(t)$ is a regular value of the function
$\phi_t$ and $\bigcup_k\{\phi_t\leq c_k(t)\}=V$. Then a  
{\em Morse homotopy} is a composition of finitely many simple Morse
homotopies, and a {\em Stein homotopy} is a family of Stein structures
$(V,J_t,\phi_t)$ such that the functions $\phi_t$ form a Morse
homotopy.  

The role of the regular levels $c_i(t)$ is to prevent critical points
from ``escaping to infinity''. The following three problems motivate
why this is the correct definition. The first one shows that, without
this condition, the notion of ``homotopy'' would become rather
trivial: 

\begin{problem}
Any two Stein structures $(J_0,\phi_0)$ and $(J_1,\phi_1)$ on $\C^n$
can be connected by a smooth family of Stein structures
$(J_t,\phi_t)$ on $\C^n$, allowing critical points to escape to
infinity.   
\end{problem}

The second one shows that the question whether two Stein structures
are homotopic does not depend on the chosen $J$-convex functions:

\begin{problem}\label{ex:homotopy}
If $\phi_0,\phi_1:V\to\R$ are two exhausting $J$-convex functions for
the same complex structure $J$, then $(J,\phi_0)$ and $(J,\phi_1)$ can
be connected by a Stein homotopy $(J,\phi_t)$.   
\end{problem}

The third one makes the question of Stein homotopies accessible to
symplectic techniques.  
Let us call a Stein structure $(J,\phi)$ {\em complete} if the
gradient vector field $\nabla_\phi\phi$ is complete; by
Problem~\ref{ex:complete}, any Stein structure can be made complete
by composing $\phi$ with a convex increasing function $f:\R\to\R$. 

\begin{problem}\label{problem:symp}
If two complete Stein structures $(J_0,\phi_0)$ and
$(J_1,\phi_1)$ on a manifold $V$ are Stein homotopic, then the
associated symplectic manifolds $(V,-dd^\C\phi_0)$ and
$(V,-dd^\C\phi_1)$ are symplectomorphic.  
\end{problem}

From now on, when we talk about individual Stein structures $(J,\phi)$
we will always assume that the function $\phi$ is Morse, while for
Stein homotopies we allow birth-death type singularities. 
\medskip

{\bf The 2-index theorem. }
Before studying Stein homotopies, let us first consider the situation
in smooth topology. It follows from Problem~\ref{ex:homotopy} (simply
ignoring $J$-convexity) that
any two Morse functions on the same manifold can be connected by a
Morse homotopy. 
In addition, we will need some control over the indices of critical
points. This is provided by following immediate consequence of the 
{\em two-index theorem} of Hatcher and Wagoner
(\cite{HatWag73}, see also \cite{Igu88}):

\begin{thm}\label{thm:2-index}
Let $\phi_0,\phi_1:W\to[0,1]$ be two Morse functions on an
$m$-dimensional cobordism $W$ with $\p_\pm W$ as regular level sets.  
For some $k\geq 3$, suppose that $\phi_0,\phi_1$ have no critical points
of index $>k$. Then $\phi_0$ and $\phi_1$ can be connected by a
Morse homotopy $\phi_t$ (all having $\p_\pm W$ as regular level sets)
without critical points of index $>k$.  
\end{thm}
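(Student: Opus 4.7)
The plan is to invoke the Hatcher--Wagoner two-index theorem (as sharpened by Igusa) as a black box and reduce the claim to its standard form by a downward induction on the top index that appears in the family.

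First, by standard Cerf theory, the space of Morse functions on $W$ with $\partial_\pm W$ as regular level sets is path-connected through generic $1$-parameter families (allowing finitely many isolated birth-death singularities). So there is some Morse homotopy $\psi_t$ from $\phi_0$ to $\phi_1$, a priori with critical points of any index up to $m=\dim W$.

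Second, I induct downward on the largest index $\ell$ of any critical point appearing in the family. If $\ell\le k$ we are done. Otherwise $\ell>k$, and since $\phi_0,\phi_1$ have no critical points of index $\ell$, every index-$\ell$ critical point of some $\psi_t$ is transient: it is born in an $(\ell-1,\ell)$-birth move and dies in an $(\ell-1,\ell)$-death move. The two-index theorem, applied to the pair of consecutive indices $(\ell-1,\ell)$, then provides a deformation of $\psi_t$ (rel endpoints, and without introducing critical points of any index $>\ell$) whose result is a Morse homotopy in which no function has a critical point of index $\ell$. The normalization uses handle slides with handles of index at most $\ell-2$; the hypothesis $k\ge 3$, and hence $\ell\ge 4$, secures enough room below for the Whitney-trick-type arguments and for killing the $K$-theoretic obstruction to straightening the Cerf graphic. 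Iterating this step at most $m-k$ times yields the desired Morse homotopy $\phi_t$ with no critical points of index $>k$.

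The main obstacle is of course the invocation of the two-index theorem itself, which is a deep result in pseudoisotopy theory built on a careful analysis of Cerf graphics and Whitehead-type algebraic obstructions; once one takes it as given, the descending induction is essentially formal. The role of the assumption $k\ge 3$ is precisely to put us above the dimensional threshold in which Hatcher--Wagoner's normalization of $(\ell-1,\ell)$-births against $(\ell-1,\ell)$-deaths succeeds unobstructed; for $k=2$ the corresponding step could fail due to a nontrivial obstruction in $K_2(\mathbb{Z}[\pi_1W])$, and the statement would need to be modified.
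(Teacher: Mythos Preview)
Your approach matches the paper's: the paper does not give an independent proof of this statement but simply records it as ``an immediate consequence of the two-index theorem of Hatcher and Wagoner~\cite{HatWag73} (see also~\cite{Igu88})'' and uses it as a black box. Your write-up does the same, only with more detail on how the reduction by downward induction on the top index works; there is nothing further in the paper to compare against.
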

 
We will apply this theorem in the following two cases with $m=2n$:
\begin{itemize}
\item the subcritical case $k+1=n\geq 4$;
\item the critical case $k=n\geq 3$.
\end{itemize}
\medskip

{\bf Uniqueness of subcritical Stein structures. }
After these preparations, we can prove our first uniqueness theorem. 

\begin{thm}[uniqueness of subcritical Stein structures]
\label{subcrit-uniqueness}
Let $(J_0,\phi_0)$ and $(J_1,\phi_1)$ be two {\em subcritical} Stein
structures on the same manifold $V$ of complex dimension $n>3$. If
$J_0$ and $J_1$ are homotopic as almost complex structures, then
$(J_0,\phi_0)$ and $(J_1,\phi_1)$ are Stein homotopic. 
\end{thm}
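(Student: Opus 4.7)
The plan is to combine the Hatcher--Wagoner two-index theorem (Theorem~\ref{thm:2-index}) with a one-parameter version of the existence theorem (Theorem~\ref{thm:ex}); the subcriticality hypothesis $k \leq n-1$ is precisely what makes all the underlying $h$-principles one-parametric.

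First I would reduce to a common smooth Morse homotopy of functions. Both $\phi_0$ and $\phi_1$ have critical points only of index $\leq n-1$, and $n-1\geq 3$ since $n>3$, so Theorem~\ref{thm:2-index} applied with $k=n-1$ yields a smooth Morse homotopy $\phi_t:V\to\R$ from $\phi_0$ to $\phi_1$ all of whose critical points (including birth-death ones) have index at most $n-1$. A regular-value sequence $c_1(t)<c_2(t)<\cdots$ can be chosen along this homotopy so that $\phi_t$ is a Morse homotopy in the sense of Section~\ref{sec:flex}. Pick also any smooth homotopy $\tilde J_t$ of almost complex structures joining $J_0$ to $J_1$.

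Next I would upgrade $(\tilde J_t,\phi_t)$ to a genuine Stein homotopy by running a parametric version of the inductive proof of Theorem~\ref{thm:ex} over the family of sublevel cobordisms $W_i(t):=\{\phi_t\leq c_i(t)\}$. Passing from $W_{i-1}(t)$ to $W_i(t)$ requires attaching a smooth family of $k$-handles with $k\leq n-1$. The extension of $J$ across such a handle (Proposition~\ref{prop:J-ext}) is governed by the $h$-principle for totally real embeddings, which holds parametrically. The extension of $\phi$ built on the model Theorem~\ref{thm:model} requires the attaching $(k-1)$-sphere to be isotropic in the contact boundary $\p_-W_{i-1}(t)$; since $k-1\leq n-2$ is strictly subcritical, the $h$-principle for isotropic embeddings lets us $C^0$-approximate the given family of attaching spheres by a family of isotropic isotopies, after which Proposition~\ref{prop:moving} produces the corresponding family of $J_t$-convex functions. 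Birth-death singularities of $\phi_t$ are absorbed at the relevant isolated times using parametric versions of Propositions~\ref{prop:creation} and~\ref{prop:cancellation}, and at $t=0,1$ Problem~\ref{ex:homotopy} identifies the resulting Stein structures with the given $(J_0,\phi_0)$ and $(J_1,\phi_1)$ up to a further Stein homotopy within a fixed complex structure.

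The main obstacle is exactly the parametric $h$-principle for isotropic embeddings and the parametric handle-by-handle bookkeeping that it enables: in the critical case $k=n$ this $h$-principle fails because Legendrian knots are rigid, and indeed an analogous uniqueness statement fails for critical Stein structures (the $n=2$ obstructions from Lisca--Mati\v{c} in Section~\ref{sec:ex} are a low-dimensional shadow of this rigidity). The combined hypotheses $k\leq n-1$ and $n>3$ are precisely what is needed so that both Theorem~\ref{thm:2-index} and every step of the parametric extension close up without obstruction.
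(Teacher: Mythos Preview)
Your proposal is correct and relies on the same essential ingredients as the paper --- the two-index theorem, the parametric $h$-principle for subcritical isotropic embeddings, and Propositions~\ref{prop:reordering}--\ref{prop:cancellation} --- but organizes them differently. The paper proceeds in two separate stages: first it keeps the complex structure fixed at $J_0$ throughout and uses Propositions~\ref{prop:reordering}--\ref{prop:cancellation} (all stated and proved for a \emph{fixed} $J$) to produce $J_0$-convex functions $\psi_t=\phi_t\circ h_t$ for a diffeotopy $h_t$, giving a Stein homotopy from $(J_0,\phi_0)$ to $\bigl((h_1)_*J_0,\phi_1\bigr)$; only then, in a second stage with $\phi_1$ held fixed, does it connect $\bigl((h_1)_*J_0,\phi_1\bigr)$ to $(J_1,\phi_1)$ via a $1$-parametric version of Theorem~\ref{thm:ex}. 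Your single-pass approach, varying $J_t$ and $\phi_t$ simultaneously, is conceptually more uniform but implicitly requires parametric-in-$J$ versions of Propositions~\ref{prop:moving}--\ref{prop:cancellation} and of Proposition~\ref{prop:J-ext}, relative to the endpoints $t=0,1$; these hold, but the paper does not state them. The paper's two-stage decomposition buys the convenience of applying the four propositions exactly as written, with $J$ fixed; yours buys a more streamlined narrative and avoids the somewhat artificial split into ``fix $J$, vary $\phi$'' followed by ``fix $\phi$, vary $J$''.
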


\begin{proof}
By Theorem~\ref{thm:2-index} with $k+1=n\geq 4$, the functions
$\phi_0$ and $\phi_1$ can be connected a Morse homotopy $\phi_t$
without critical points of index $\geq n$. We cut the homotopy into a
finite number of simple Morse homotopies, and we cut each simple
homotopy at the regular levels $c_i$ into countably many compact
cobordisms. Let us pick gradient-like vector fields $X_t$ for
$\phi_t$. After further decomposition of these cobordisms,
we may assume that on each cobordism $W$ only one of the
following two cases occurs: 
\begin{enumerate}
\item all the Smale cobordisms $(W,X_t,\phi_t)$ are elementary; 
\item a pair of critical points is created or cancelled. 
\end{enumerate}
In the first case, only the levels of the critical points vary and the 
ataching spheres move by smooth isotopies. By the $h$-principle for
subcritical isotropic embeddings, these isotopies can be
$C^0$-approximated by isotropic isotopies. So we can apply
Propositions~\ref{prop:reordering} and~\ref{prop:moving} to realize
the same moves by $J$-convex functions. The second case is treated by
Propositions~\ref{prop:creation} and~\ref{prop:cancellation}. Applying
the four propositions inductively over the simple homotopies, and
within each simple homotopy over increasing levels, we hence construct
a family of $J_0$-convex functions (all for the same $J_0$!)
$\psi_t:V\to\R$ such that $\psi_t=\phi_t\circ h_t$ for a smooth family
of diffeomorphisms $h_t:V\to V$ with $h_0=\id$. 

Note that $\bigl((h_t)_*J_0,\phi_t\bigr)$ provides a Stein homotopy
from $(J_0,\phi_0)$ to $\bigl(J_2:=(h_1)_*J_0,\phi_1\bigr)$. So the
theorem is proved if we can connect $(J_2,\phi_1)$ to $(J_1,\phi_1)$
by a Stein homotopy $(J_t,\phi_1)$, $t\in[1,2]$ (with fixed function
$\phi_1$!). For this, we decompose $V$ into elementary cobordisms
containing only one critical level, and we pick a family $X_t$ of
gradient-like vector fields for $\phi_1$ connecting the gradients with
respect to $J_1$ and $J_2$. Then for each critical point $p$ on
such a cobordism $W$ the attaching spheres with respect to $X_t$ form
a smooth isotopy $S_t$, $t\in[1,2]$, connecting the isotropic spheres
$S_1$ and $S_2$. Again by the $h$-principle for subcritical isotropic
embeddings, we can make the isotopy $S_t$ isotropic. Now by a
1-parametric version of the Existence Theorem~\ref{thm:ex}, we can
connect $J_1$ and $J_2$ by a smooth family of integrable complex
structures $J_t$ on $W$ such that $\phi_1$ is $J_t$-convex for all
$t\in[1,2]$.  
\end{proof}

\begin{problem}
Find the major gap in the preceding proof, and consult~\cite{CieEli12}
on how it can be filled.  
\end{problem}

{\bf Exotic Stein structures. } 
In the critical case, uniqueness fails dramatically. In 2009,  
McLean~\cite{McL09} constructed infinitely many pairwise
non-homotopic Stein structures on $\C^n$ for any $n\geq 4$.
Extending McLean's result to $n=3$ (see~\cite{AboSei10}) and combining
it with the surgery exact sequence from~\cite{BEE09}, one obtains 

\begin{thm}\label{thm:exotic}
Let $(V,J)$ be an almost complex manifold of real dimension $2n\geq 6$
which admits an exhausting Morse function with finitely many critical
points all of which have index $\leq n$. Then $V$ carries infinitely
many finite type Stein structures $(J_k,\phi_k)$, $k\in\N$, such that
the $J_k$ are homotopic to $J$ as almost complex structures and
$(J_k,\phi_k)$, $(J_\ell,\phi_\ell)$ are not Stein homotopic for
$k\neq\ell$. 
\end{thm}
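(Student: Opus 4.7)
The plan is to combine the Existence Theorem~\ref{thm:ex} with the exotic finite type Stein structures on $\C^n$ produced by McLean (and Abouzaid--Seidel for $n=3$), glued in via a Stein boundary connected sum, and then to distinguish the resulting structures using symplectic homology controlled by the Bourgeois--Ekholm--Eliashberg surgery exact sequence.

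First, I would apply Theorem~\ref{thm:ex} to the given almost complex structure $J$ and exhausting Morse function to produce a finite type Stein structure $(J_*,\phi_*)$ on $V$ with $J_*$ homotopic to $J$ as almost complex structures; the critical point count is preserved, so $(J_*,\phi_*)$ has only finitely many critical points. Second, for each $k\in\N$ I would invoke McLean's construction (respectively its $n=3$ extension by Abouzaid--Seidel) to produce a finite type Stein structure $(I_k,\psi_k)$ on $\C^n$ whose underlying almost complex structure is the standard one and whose symplectic homology (specifically, its growth rate as an invariant of the symplectic completion) is different for different $k$.

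Third, I would form the Stein boundary connected sum $(J_k,\phi_k):=(J_*,\phi_*)\,\natural\,(I_k,\psi_k)$; this is a finite type Stein structure on a manifold diffeomorphic to $V$, since boundary connected sum with $\C^n$ does not change smooth type in dimension $2n\geq 6$. Because $I_k$ is standard as an almost complex structure, the almost complex structure underlying $J_k$ remains homotopic to $J_*$, hence to $J$. Fourth, to separate the Stein homotopy classes, I would apply Problem~\ref{problem:symp}: after passing to complete representatives, a Stein homotopy between $(J_k,\phi_k)$ and $(J_\ell,\phi_\ell)$ would give a symplectomorphism of their completions, so it suffices to show these completions are not symplectomorphic. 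For this I would feed the boundary connected sum into the BEE surgery exact sequence (together with the behavior of symplectic homology under boundary connected sum), which shows that the growth rate of symplectic homology of $V\,\natural\,\C^n_k$ retains the contribution of the $\C^n_k$ summand; hence it is a strictly monotone function of $k$, distinguishing the structures pairwise.

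The hard part, and the only nonformal step, is the last one: extracting McLean's distinguishing invariant of $(I_k,\psi_k)$ from the symplectic homology of the connected summed manifold $(J_k,\phi_k)$ in the presence of the generic Stein piece $(J_*,\phi_*)$. McLean's invariant is the exponential growth rate of $SH_\ast$ under the action of a suitable symplectomorphism, and one needs the BEE surgery exact sequence both to relate $SH_\ast$ of the connected sum to that of the summands, and to verify that the growth rate estimate survives this operation and remains strictly distinct for distinct $k$. All other steps, in particular the Stein boundary connected sum construction and the preservation of almost complex homotopy class, are direct consequences of the constructions in Sections~\ref{sec:surr} and~\ref{sec:ex}.
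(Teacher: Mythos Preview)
Your proposal is correct and matches the approach the paper indicates: the paper gives no detailed proof of this theorem, only the one-line attribution ``Extending McLean's result to $n=3$ \dots\ and combining it with the surgery exact sequence from~\cite{BEE09}'' together with the remark that the structures are distinguished by showing the symplectic manifolds $(V,-dd^\C\phi_k)$ are pairwise non-symplectomorphic via symplectic homology, which is exactly the strategy you spell out. Two small sharpenings: McLean's distinguishing invariant is the growth rate of the dimension of action-filtered symplectic homology (not growth under a symplectomorphism), and the last step becomes cleaner if you take $(J_*,\phi_*)$ to be \emph{flexible} (Theorem~\ref{thm:flexible-ex}), since then $SH(V,J_*,\phi_*)=0$ and the boundary-connected-sum formula gives $SH(V\natural\C^n_k)\cong SH(\C^n_k)$ directly.
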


Here a Stein structure $(J,\phi)$ is said to be of {\em finite type}
if $\phi$ has only finitely many critical points. The Stein
structures $(J_k,\phi_k)$ are distinguished up to homotopy by showing
that the symplectic manifolds $(V,-dd^\C\phi_k)$ are pairwise
non-symplectomorphic, distinguished by their symplectic homology. 
Despite this wealth of exotic Stein structures, it has recently turned
out that there is still some flexibility in the critical case, which
we will describe next. 
\medskip

{\bf Murphy's h-principle for loose Legendrian knots. } 
It is well-known that the 1-parametric $h$-principle fails for
Legendrian embeddings. More precisely, a {\em formal Legendrian
isotopy} $(f_t,F^s_t)$ between two Legendrian embeddings
$f_0,f_1:\Lambda\into(M,\xi)$ consists of a smooth isotopy
$f_t:\Lambda\into M$, $t\in[0,1]$, together with a 2-parameter family
of injective bundle homomorphisms $F^s_t:T\Lambda\to TM$ covering
$f_t$, $s,t\in[0,1]$, such that $F^s_0=df_0$, $F^s_1=df_1$,
$F^0_t=df_t$, and $F^1_t:T\Lambda\to\xi$ is isotropic for all $s,t$. By the
$h$-principle for Legendrian immersions, this implies that $f_0$ and
$f_1$ are connected by a Legendrian regular homotopy. On the other
hand, there are many examples of pairs of Legendrian embeddings
that are formally Legendrian isotopic but not Legendrian isotopic (see
e.g.~\cite{Che02} in dimension 3, and~\cite{EES05} in higher dimensions). 

Despite the failure of the $h$-principle, there are two partial
flexibility results for Legendrian knots in dimension 3: Any two
formally isotopic Legendrian knots in $(\R^3,\xi_\st)$ become Legendrian
isotopic after sufficiently many stabilizations~\cite{FucTab97}, and
any two formally isotopic Legendrian knots in the complement of an
overtwisted disk are Legendrian isotopic~\cite{Dym01}. E.~Murphy
recently discovered a remarkable class of Legendrian embeddings in
dimensions $\geq 5$ which satisfy the 1-parametric $h$-principle:  

\begin{thm}[Murphy's $h$-principle for loose Legendrian embeddings~\cite{Mur11}]~~

\label{thm:loose}
In contact manifolds $(M,\xi)$ of dimension $\geq 5$ there exists a
class of {\em loose} Legendrian embeddings with the following
properties: 

(a) The stabilization construction described in Section~\ref{sec:ex}
with $\chi(N)=0$ turns any Legendrian embedding $f_0$ into a loose
Legendrian embedding $f_1$ formally isotopic to $f_0$.

(b) Let $(f_t,F_t^s)$, $s,t\in[0,1]$, be a formal Legendrian
isotopy connecting two loose Legendrian embeddings
$f_0,f_1:\Lambda\into M$. Then 
there exists a Legendrian isotopy $\wt f_t$ connecting  $\wt f_0=f_0$
and  $\wt f_1=f_1$ which is $C^0$-close to $f_t$ and is homotopic to
the formal isotopy $(f_t,F_t^s)$ through formal isotopies with
fixed endpoints. 
\end{thm}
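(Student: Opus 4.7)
The plan follows Murphy's original strategy. First I would make precise the notion of a \emph{loose chart}: a Darboux ball $B\subset M$ in which the Legendrian $f$ looks, up to contactomorphism, like the product of a fixed stabilized Legendrian arc in $(\R^3,\xi_\st)$ with a small transverse $(n-2)$-disk $D^{n-2}$, and into which no Reeb chords of $f$ from outside enter. A Legendrian embedding is called \emph{loose} if it admits such a chart. Statement (a) is then a direct calculation in the front projection: when the stabilization construction of Section~\ref{sec:ex} is carried out with a submanifold $N\subset B^{n-1}$ satisfying $\chi(N)=0$, one may isotope $N$ to a neighborhood of a codimension-$1$ submanifold, producing within the stabilization region a rectangular box on which the front is, up to Darboux normalization, exactly the loose model. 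The hypothesis $\chi(N)=0$ is precisely what makes such a box available, and such $N$ exist only when $n>2$.

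For (b), the first step is to apply Gromov's $h$-principle for isotropic immersions to the formal isotopy $(f_t,F_t^s)$: this produces a Legendrian regular homotopy $\wh f_t$ that is $C^0$-close to $f_t$, coincides with $f_0,f_1$ at the endpoints, and whose accompanying formal data is homotopic rel endpoints, through formal Legendrian isotopies, to the given one. Generically $\wh f_t$ has only finitely many transverse double points, appearing and disappearing in cancelling pairs as $t$ varies. All the remaining difficulty is now concentrated in eliminating these double points inside a \emph{Legendrian} isotopy without disturbing the fixed endpoints.

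This last step is the main obstacle and the heart of Murphy's contribution. The idea is that any double point of $\wh f_t$ can be slid along $\Lambda$ into the loose chart of $f_0$ (or of $f_1$), where the zig-zag geometry allows it to be cancelled by a local Legendrian Reidemeister-II move. Performing this consistently across $t\in[0,1]$ requires a Legendrian version of the wrinkling technique of Eliashberg--Mishachev: one introduces controlled, cancellable pairs of auxiliary double points inside the loose chart that can be permuted with the generic crossings of the family $\wh f_t$ and then absorbed. The hypothesis $\dim M\geq 5$ enters twice here: it gives enough codimension to slide double points along $\Lambda$ without creating spurious transverse intersections, and the presence of the parameter disk $D^{n-2}$ of positive dimension inside each loose chart is precisely what permits the absorption to be performed in a $1$-parameter family. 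Once all self-intersections have been removed, the resulting path $\wt f_t$ is the desired Legendrian isotopy, and tracking the formal data through the construction shows it is homotopic to $(f_t,F_t^s)$ rel endpoints.
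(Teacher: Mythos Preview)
The paper does not give a proof of this theorem. It is stated as Murphy's result with a citation to~\cite{Mur11} and then used as a black box in the discussion of flexible Stein structures; no argument, not even a sketch, appears in the text. So there is nothing in the paper to compare your proposal against.

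That said, your outline is a reasonable summary of the architecture of Murphy's actual proof: define loose charts, verify (a) by inspection of the front after a $\chi(N)=0$ stabilization, and for (b) reduce via the $h$-principle for Legendrian immersions to a regular homotopy and then use the loose chart together with wrinkling-type techniques to remove the self-intersections. Be aware, though, that the last step---turning a generic Legendrian regular homotopy into a genuine Legendrian isotopy by absorbing double points into the loose chart in a $1$-parameter family---is where all the real content lies, and your description of it (``slide double points into the loose chart and cancel by a Reidemeister-II move'') hides substantial technical work: Murphy's argument proceeds via \emph{wrinkled Legendrians} and their resolution, not by a direct sliding-and-cancellation picture, and the passage from wrinkled to genuine Legendrian embeddings is delicate. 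As a high-level roadmap your proposal is fine, but it is not a proof and the paper makes no claim to supply one either.
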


Note that, in contrast to the 3-dimensional case, Legendrian
embeddings in dimension $\geq 5$ become loose after just {\em one}
stabilization, and the stabilization of a loose Legendrian embedding 
is Legendrian isotopic to the original one.  

{\bf Existence and uniqueness of flexible Stein structures. }
Let us call a Stein manifold $(V,J,\phi)$ of complex dimension $\geq
3$ {\em flexible} if if all attaching spheres on all regular level
sets are either subcritical or loose Legendrian. In view of
Theorem~\ref{thm:loose} (a), we can perform a stabilization in each
inductional step of the proof of the Existence Theorem~\ref{thm:ex} to
obtain 

\begin{thm}[existence of flexible Stein structures]
\label{thm:flexible-ex}
Any smooth manifold $V$ of dimension $2n>4$ which admits a Stein structure
also admits a {\em flexible} one (in a given homotopy class of almost
complex structures). \hfill$\Box$
\end{thm}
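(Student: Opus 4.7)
The plan is to revisit the inductive handle-by-handle construction in the proof of the Existence Theorem~\ref{thm:ex} and insert one additional modification at each critical-index step to make the attaching Legendrian loose. The hypothesis that $V$ admits a Stein structure provides an almost complex structure $J$ in the given homotopy class and an exhausting Morse function $\phi:V\to\R$ with all critical indices at most $n$; decompose $V$ by regular sublevel sets $W_0\subset W_1\subset\cdots$ so that $W_i$ is built from $W_{i-1}$ by attaching a single $k$-handle with $k\le n$, and proceed by induction on $i$.

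Assume $(W_{i-1},J,\phi)$ is already flexible. The next handle attachment reduces by Proposition~\ref{prop:ex} to the choice of an attaching sphere $\Lambda\subset\partial W_{i-1}$ that is isotropic when $k<n$ and Legendrian when $k=n$; the extension of $J$ and $\phi$ across the handle is then supplied by Proposition~\ref{prop:J-ext} and Theorem~\ref{thm:model}. In the subcritical case nothing further is required, so assume $k=n$. Inside a small Darboux chart meeting $\Lambda$, perform the stabilization construction from Section~\ref{sec:ex} using a compact submanifold $N\subset B^{n-1}$ of Euler characteristic $\chi(N)=0$; such an $N$ exists because $n-1\ge 2$. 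By Theorem~\ref{thm:loose}(a), the resulting Legendrian $\Lambda'$ is loose and formally Legendrian isotopic to $\Lambda$.

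To install $\Lambda'$ in place of $\Lambda$ within the evolving Stein structure, observe that the self-intersection index of the Legendrian regular homotopy realizing the stabilization is $(-1)^{(n-1)(n-2)/2}\chi(N)=0$; by Whitney's theorem, valid because $\dim\partial W_{i-1}=2n-1\ge 5$, this regular homotopy can be deformed rel endpoints to a smooth isotopy, which combined with the ambient formal Legendrian data and Gromov's $h$-principle for isotropic immersions yields an isotropic isotopy $\Lambda_t$ from $\Lambda$ to $\Lambda'$ inside $\partial W_{i-1}$. Proposition~\ref{prop:moving} then drags the attaching sphere from $\Lambda$ to $\Lambda'$ through Stein cobordisms, and the handle is attached along $\Lambda'$ via Proposition~\ref{prop:J-ext} and Theorem~\ref{thm:model}, preserving flexibility.

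Iterating over $i$ produces a complex structure $J'$ homotopic to $J$ together with an exhausting $J'$-convex function whose critical $n$-handles are all attached along loose Legendrian spheres, i.e., a flexible Stein structure on $V$ in the prescribed almost complex homotopy class. The main technical point is the upgrade from the formal Legendrian isotopy supplied by Theorem~\ref{thm:loose}(a) to a genuine isotropic isotopy usable in Proposition~\ref{prop:moving}, which is precisely where both the dimension hypothesis $2n>4$ and the vanishing of $\chi(N)$ enter the argument.
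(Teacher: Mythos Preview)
Your overall strategy --- rerun the handle-by-handle construction of Theorem~\ref{thm:ex} and, at each critical step, stabilize the Legendrian attaching sphere with $\chi(N)=0$ so that it becomes loose --- is exactly the paper's argument. The problem is the extra step you insert to ``install $\Lambda'$ in place of $\Lambda$'' via Proposition~\ref{prop:moving}.

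Proposition~\ref{prop:moving} requires a genuine \emph{isotropic} isotopy $S_t$, and in the critical case $k=n$ an isotropic $(n-1)$-sphere in a $(2n-1)$-dimensional contact manifold is Legendrian; so you are asserting a Legendrian isotopy from $\Lambda$ to its stabilization $\Lambda'$. This is false in general: looseness is invariant under Legendrian isotopy, so if $\Lambda$ is not already loose, no such isotopy exists. Your appeal to ``Gromov's $h$-principle for isotropic immersions'' does not help here --- that $h$-principle produces isotropic \emph{immersions}, not Legendrian isotopies, and the failure of the $1$-parametric $h$-principle for Legendrian embeddings is precisely why Murphy's theorem is needed in the first place. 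Whitney's trick gives you only a smooth isotopy, which is not enough input for Proposition~\ref{prop:moving}.

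The good news is that this entire step is unnecessary. At this point in the inductive construction the handle has \emph{not yet been attached}: you have merely produced, via Proposition~\ref{prop:ex}, a totally real disk with Legendrian boundary $\Lambda$. Theorem~\ref{thm:loose}(a) gives a \emph{formal} Legendrian isotopy from $\Lambda$ to the loose $\Lambda'$; this includes a smooth isotopy of embeddings together with the $2$-parameter family $F_t^s$ of isotropic monomorphisms. That is exactly the data used in the proof of Proposition~\ref{prop:ex} (the ``$G_t^s$'' there) to invoke the $h$-principle for totally real embeddings and extend the boundary motion to an isotopy of totally real disks. So you simply attach the handle along $\Lambda'$ directly, with no recourse to Proposition~\ref{prop:moving}. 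Delete the third paragraph's appeal to Whitney and Proposition~\ref{prop:moving}, replace it with this observation, and your proof is complete and matches the paper's.
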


Now we can repeat the proof of Theorem~\ref{subcrit-uniqueness}, using
Theorem~\ref{thm:2-index} in the critical case $k=n\geq 3$ and
Theorem~\ref{thm:loose} (b) for the Legendrian attaching spheres
(always keeping the Stein structures flexible in the process), to
obtain  

\begin{thm}[uniqueness of flexible Stein structures]
\label{flexible-uniqueness}
Let $(J_0,\phi_0)$ and $(J_1,\phi_1)$ be two {\em flexible} Stein
structures on the same manifold $V$ of complex dimension $n>2$. If
$J_0$ and $J_1$ are homotopic as almost complex structures, then
$(J_0,\phi_0)$ and $(J_1,\phi_1)$ are Stein homotopic. \hfill$\Box$ 
\end{thm}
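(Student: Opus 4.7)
The plan is to adapt the proof of Theorem~\ref{subcrit-uniqueness} to the flexible setting, replacing the $h$-principle for subcritical isotropic embeddings by Murphy's $h$-principle (Theorem~\ref{thm:loose}) wherever critical (Legendrian) attaching spheres appear. First, apply Theorem~\ref{thm:2-index} with $k=n\geq 3$ to connect $\phi_0$ and $\phi_1$ by a Morse homotopy $\phi_t$ without critical points of index $>n$ (and with prescribed regular level structure at infinity, so as to qualify as a Morse homotopy in our sense). Decompose this homotopy into finitely many simple Morse homotopies, cut each of these along the regular levels $c_i(t)$ into countably many compact cobordisms, and further subdivide so that on each cobordism $W$ either (i) all intermediate Smale cobordisms $(W,X_t,\phi_t)$ are elementary, so only critical levels shuffle and attaching spheres undergo smooth isotopies, or (ii) a single pair of critical points is created or cancelled.

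In case (i), for subcritical attaching spheres use the $h$-principle for subcritical isotropic embeddings as in the proof of Theorem~\ref{subcrit-uniqueness}. For a critical (Legendrian) attaching sphere $S_t$, both endpoints $S_0$, $S_1$ are loose by hypothesis, and the smooth isotopy between them is automatically covered by a formal Legendrian isotopy (coming from a path of gradient-like vector fields for the fixed $J$). Theorem~\ref{thm:loose}~(b) then upgrades this to an honest Legendrian isotopy through loose Legendrians, after which Propositions~\ref{prop:reordering} and~\ref{prop:moving} realize the moves $J$-convexly. In case (ii) of cancellation, Proposition~\ref{prop:cancellation} applies directly. In case (ii) of creation, after creating the canceling pair via Proposition~\ref{prop:creation} we immediately stabilize the new critical attaching sphere using Theorem~\ref{thm:loose}~(a) with $\chi(N)=0$; by the same proposition applied to a subsequent isotropic isotopy, this stabilization can be absorbed into the $J$-convex homotopy without introducing new critical points on neighboring levels. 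Concatenating these moves yields a $J_0$-convex homotopy $\psi_t=\phi_t\circ h_t$ through \emph{flexible} Stein structures, bringing $(J_0,\phi_0)$ to $\bigl((h_1)_*J_0,\phi_1\bigr)$.

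To finish, one still has to interpolate between $J_2:=(h_1)_*J_0$ and $J_1$ with fixed function $\phi_1$. Repeat the decomposition of $V$ into elementary pieces and a smooth family of gradient-like vector fields for $\phi_1$ joining the $J_1$- and $J_2$-gradients. On each piece the attaching spheres sweep out a smooth isotopy between isotropic (resp.~loose Legendrian) spheres, which by the appropriate $h$-principle (subcritical $h$-principle or Theorem~\ref{thm:loose}~(b)) can be made isotropic (resp.~Legendrian through loose Legendrians). A 1-parametric version of the Existence Theorem~\ref{thm:ex} then produces a path $J_t$ of integrable complex structures for which $\phi_1$ is $J_t$-convex.

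The main obstacle, and the real reason the critical case demands more than a cosmetic modification of the subcritical proof, is the preservation of looseness throughout. Subcritical isotropic $h$-principle is automatic and unconditional, but a generic Legendrian isotopy leaves the loose class; applying Theorem~\ref{thm:loose}~(b) requires that \emph{both} endpoints be loose, and when Proposition~\ref{prop:creation} introduces a new critical point the freshly-born attaching sphere is a small standard Legendrian which is \emph{not} a priori loose. One must therefore interleave each creation step with a stabilization (using $\chi(N)=0$ so as not to change the formal isotopy class), and verify that every intermediate Stein structure produced by Propositions~\ref{prop:reordering}--\ref{prop:cancellation} still satisfies the looseness condition on all regular level sets simultaneously — not just on the fixed level at which the move is performed. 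This requires a careful inductive bookkeeping over the stratification of the Morse homotopy by birth-death times and critical levels, analogous to (but more delicate than) the bookkeeping indicated as an exercise at the end of the proof of Theorem~\ref{subcrit-uniqueness}.
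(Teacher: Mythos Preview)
Your proposal is correct and follows essentially the same approach as the paper: the paper's proof consists of the single sentence ``repeat the proof of Theorem~\ref{subcrit-uniqueness}, using Theorem~\ref{thm:2-index} in the critical case $k=n\geq 3$ and Theorem~\ref{thm:loose}~(b) for the Legendrian attaching spheres (always keeping the Stein structures flexible in the process)'', and you have unpacked exactly this. Your final paragraph on preserving looseness---in particular the need to stabilize freshly created critical attaching spheres via Theorem~\ref{thm:loose}~(a)---is precisely the content hidden in the paper's parenthetical ``always keeping the Stein structures flexible'', and is the right place to focus attention.
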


\begin{remark}
(a) Since subcritical Stein manifolds are flexible,
Theorem~\ref{flexible-uniqueness} allows us to weaken the hypothesis
on the dimension in Theorem~\ref{subcrit-uniqueness} from $n>3$ to
$n>2$.  

(b) Combining the result in~\cite{Cie02a} with the surgery exact sequence
in~\cite{BEE09} implies that flexible Stein manifolds have vanishing
symplectic homology. 
\end{remark}

{\bf Applications to symplectomorphisms and pseudo-isotopies. }
Theorem~\ref{flexible-uniqueness} has the following consequence for
symplectomorphisms of flexible Stein manifolds. 

\begin{theorem}\label{thm:symplectomorphism}
Let $(V,J,\phi)$ be a complete {\em flexible} Stein manifold of complex
dimension $n>2$, and $f:V\to V$ be a diffeomorphism such that $f^*J$ is
homotopic to $J$ as almost complex structures. 
Then there exists diffeotopy (i.e., a smooth family of diffeomorphisms)
$f_t:V\to V$, $t\in[0,1]$, such that $f_0=f$, and $f_1$ is a
symplectomorphism of $(V,\om_\phi)$.   
\end{theorem}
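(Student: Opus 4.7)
The strategy is to transport the Stein structure under $f$, recognize the pullback as Stein-homotopic to the original via Theorem~\ref{flexible-uniqueness}, and convert the resulting Stein homotopy into a symplectic isotopy. First, set $J':=f^*J$ and $\phi':=\phi\circ f$. Then $f:(V,J')\to(V,J)$ is tautologically a biholomorphism, so $(V,J',\phi')$ is a complete flexible Stein structure on $V$, and a short computation using $f^*(d^\C_J\psi)=d^\C_{J'}(\psi\circ f)$ gives
\[
  \omega_{\phi'}=-dd^\C_{J'}\phi'=f^*\omega_\phi.
\]

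By hypothesis, $J'$ is homotopic to $J$ as almost complex structures. Since $(J,\phi)$ and $(J',\phi')$ are both flexible Stein structures on the same manifold of complex dimension $n>2$, Theorem~\ref{flexible-uniqueness} produces a Stein homotopy $(J_t,\phi_t)$ connecting $(J_0,\phi_0)=(J,\phi)$ to $(J_1,\phi_1)=(J',\phi')$. After pre-composing the $\phi_t$ with a smooth family of convex increasing functions (see Problem~\ref{ex:complete}), we may further assume every intermediate Stein structure is complete. The associated exact symplectic forms $\omega_t=-dd^\C_{J_t}\phi_t=d\lambda_t$ with $\lambda_t:=-d^\C_{J_t}\phi_t$ now fit the setting of Problem~\ref{problem:symp}: solving the Moser equation $\iota_{X_t}\omega_t=-\dot\lambda_t$ and integrating yields a diffeotopy $h_t:V\to V$ with $h_0=\id$ and $h_1^*\omega_{\phi'}=\omega_\phi$.

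Finally, define $f_t:=f\circ h_t$. Then $f_0=f\circ\id=f$, and
\[
  f_1^*\omega_\phi=(f\circ h_1)^*\omega_\phi=h_1^*(f^*\omega_\phi)=h_1^*\omega_{\phi'}=\omega_\phi,
\]
so $f_1$ is a symplectomorphism of $(V,\omega_\phi)$, completing the argument. The principal obstacle is internal to Problem~\ref{problem:symp}: running Moser's trick on a non-compact manifold across a Morse homotopy that may include handle creations and cancellations (birth--death singularities) requires taming the Moser vector field near those events and ensuring its completeness, which is achieved by blending $X_t$ with appropriate multiples of the gradient-like fields $\nabla_{\phi_t}\phi_t$ and exploiting the regular levels $c_i(t)$ built into the definition of a Morse homotopy to keep critical points from escaping to infinity. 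Granted that input, the theorem is a formal consequence of Theorem~\ref{flexible-uniqueness}.
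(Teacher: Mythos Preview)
Your argument is correct and follows essentially the same route as the paper: pull back the Stein structure by $f$, invoke Theorem~\ref{flexible-uniqueness} to obtain a Stein homotopy, apply Problem~\ref{problem:symp} to get the diffeotopy $h_t$, and set $f_t=f\circ h_t$. Your write-up is in fact slightly more careful than the paper's sketch (you justify flexibility of the pullback via the biholomorphism $f:(V,J')\to(V,J)$ and address completeness of the intermediate structures); one tiny slip is that the convex increasing functions should be \emph{post}-composed with $\phi_t$, not pre-composed.
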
 

\begin{proof}
By Theorem~\ref{flexible-uniqueness}, there exists a Stein homotopy
$(J_t,\phi_t)$ connecting the flexible Stein structures
$(J_0,\phi_0)=(J,\phi)$ and $(J_1,\phi_1)=(f^*J,f^*\phi)$. By
Problem~\ref{problem:symp}, there exists a 
diffeotopy $h_t:V\to V$ such that $h_0=\id$ and
$h_t^*\om_{\phi_t}=\om_\phi$. In particular,
$(f\circ h_1)^*\om_\phi=h_1^*\om_{\phi_1}=\om_\phi$, so $f_t=f\circ
h_t$ is the desired diffeotopy.   
\end{proof}

\begin{remark}
Even if $(J,\phi)$ is of finite type and $f=\id$ outside a compact
set, the diffeotopy $f_t$ provided by
Theorem~\ref{thm:symplectomorphism} will in general {\em not} equal
the identity outside a compact set. 
\end{remark}

For our last application, consider a closed manifold $M$. A {\em
  pseudo-isotopy} of $M$ is a smooth function $\phi:M\times[0,1]\to\R$ 
without critical points which is constant on $M\times 0$ and $M\times
1$ with $f|_{M\times 0}<f|_{M\times 1}$. 
We denote by $\EE(M)$ the space of pseudo-isotopies equipped with
the $C^\infty$-topology. The homotopy group $\pi_0\EE(M)$ is trivial
if $\dim M\geq 5$ and $M$ is simply connected~\cite{Cer70}, while in
the non-simply connected case for $\dim M\geq 6$ it is often
nontrivial~\cite{HatWag73,Igu88}.

\begin{problem}
Show that $\EE(M)$ is homotopy equivalent to the space $\PP(M)$ of
diffeomorphisms of $M\times[0,1]$ that restrict as the identity to
$M\times 0$. (The map $\PP(M)\to\EE(M)$ assigns to $f$ the pullback
$f^*\phi_\st$ of the function $\phi_\st(x,t)=t$, and a homotopy inverse
is obtained by following trajectories of a gradient-like vector
field). This explains the name ``pseudo-isotopy'' because any isotopy
$f_t:M\to M$ with $f_0=\id$ defines an element
$f(x,t)=\bigl(f_t(x),t\bigr)$ in $\PP(M)$.   
\end{problem}

Now consider a topologically trivial Stein cobordism
$(M\times[0,1],J,\phi)$ and denote by
$\EE(M\times[0,1],J)$ the space of $J$-convex functions
$M\times[0,1]\to\R$ without critical points which are constant on
$M\times 0$ and $M\times 1$ with $f|_{M\times 0}<f|_{M\times 1}$. 

\begin{thm}\label{thm:pseudo-Stein}
For any topologically trivial {\em flexible} Stein cobordism
$(M\times[0,1],J,\phi)$ of dimension $2n>4$ the canonical inclusion
$\II:\EE(M\times[0,1],J)\into\EE(M)$ induces a {\em surjection}
$$
   \II_*:\pi_0\EE(M\times[0,1],J)\to\pi_0\EE(M).
$$ 
\end{thm}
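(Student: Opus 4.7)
The plan is to construct, for any $\psi\in\EE(M)$, a $J$-convex pseudo-isotopy $\widetilde\psi\in\EE(M\times[0,1],J)$ together with an explicit path in $\EE(M)$ from $\psi$ to $\widetilde\psi$. First, apply a boundary-relative version of the Existence Theorem~\ref{thm:ex} to the function $\psi$ on $W:=M\times[0,1]$. Since $\psi$ has no critical points, the Morse-index hypothesis is vacuous, and the theorem produces an integrable complex structure $J'$ on $W$, homotopic to $J$ through almost complex structures rel $\p W$, making $\psi$ into a $J'$-convex function. Thus $W$ now carries two flexible Stein cobordism structures: the given $(J,\phi)$, and $(J',\psi)$, which is trivially flexible since it has no critical points.

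Next, apply a cobordism version of the flexible uniqueness Theorem~\ref{flexible-uniqueness} to this pair (after possibly pre-composing $\phi$ with a convex increasing function $\R\to\R$ so that its boundary values match those of $\psi$). Following the proof of Theorem~\ref{subcrit-uniqueness}, extended to the critical case by substituting Murphy's $h$-principle Theorem~\ref{thm:loose}(b) for the subcritical isotropic $h$-principle, one first connects $\phi$ and $\psi$ by a smooth Morse homotopy $\phi_t$ supplied by the two-index Theorem~\ref{thm:2-index}, then realises each elementary move along this homotopy by the Stein-side moves of Propositions~\ref{prop:reordering}--\ref{prop:cancellation}. The output is a family of genuine $J$-convex functions $\psi_t:W\to\R$ (for the \emph{fixed} $J$) together with a diffeotopy $h_t:W\to W$, equal to the identity near $\p W$ with $h_0=\id$, satisfying $\psi_t=\phi_t\circ h_t$. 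At $t=1$, the function $\psi_1=\psi\circ h_1$ is $J$-convex and has no critical points, so $\widetilde\psi:=\psi_1\in\EE(M\times[0,1],J)$.

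Finally, observe that the one-parameter family $s\mapsto\psi\circ h_s$, $s\in[0,1]$, is itself a path in $\EE(M)$: each function has no critical points (since $\psi$ has none and $h_s$ is a diffeomorphism) and is constant on $M\times 0$ and on $M\times 1$ with the correct ordering (since $h_s$ is the identity near $\p W$). This path connects $\psi=\psi\circ h_0$ to $\widetilde\psi=\psi\circ h_1$, proving that $[\widetilde\psi]\in\pi_0\EE(M\times[0,1],J)$ is a preimage of $[\psi]$ under $\II_*$ and establishing the desired surjectivity.

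The main obstacle is carrying out the Existence and Uniqueness arguments rel $\p W$. Specifically, one needs a boundary-relative Existence Theorem producing $J'=J$ near $\p W$; a boundary-fixing Morse homotopy $\phi_t$ whose restriction to $\p W$ is constant in $t$; and relative refinements of Propositions~\ref{prop:moving}, \ref{prop:creation} and \ref{prop:cancellation} yielding a diffeotopy $h_t$ which is the identity in a neighbourhood of $\p W$. Given these refinements, no births or cancellations of critical pairs touch the boundary, and the scheme above goes through.
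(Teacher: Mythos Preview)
Your argument is essentially the paper's own proof, with one superfluous detour. The paper proceeds exactly as in your second and third paragraphs: invoke the two-index theorem with $k=n\geq 3$ to connect $\phi$ to $\psi$ by a Morse homotopy $\phi_t$ without critical points of index $>n$, then run the first half of the proof of Theorem~\ref{subcrit-uniqueness} (upgraded to the flexible case via Murphy's $h$-principle) to obtain a diffeotopy $h_t$ with $h_0=\id$ such that $\psi_t=\phi_t\circ h_t$ is $J$-convex for all $t$; finally, $\psi_1=\psi\circ h_1\in\EE(M\times[0,1],J)$ is connected to $\psi$ in $\EE(M)$ by the path $\psi\circ h_t$.

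Your first step, constructing an auxiliary complex structure $J'$ making $\psi$ itself $J'$-convex, is unnecessary. You introduce $J'$ only in order to phrase the problem as an instance of the Uniqueness Theorem~\ref{flexible-uniqueness}, but then you unpack that theorem and use only its first half, which takes as input merely a Morse homotopy $\phi_t$ ending at $\psi$ and does not require $\psi$ to be $J'$-convex for anything. (The second half of the uniqueness proof, which would actually use $J'$, is irrelevant here since you only need a $J$-convex representative of $[\psi]\in\pi_0\EE(M)$, not $\psi$ itself.) Dropping this step also removes the most delicate of the boundary-relative refinements you flag at the end, namely arranging $J'=J$ near \emph{both} boundary components, which is not obviously available from the Existence Theorem.
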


\begin{proof}
Let $\psi\in\EE(M)$ be given. By Theorem~\ref{thm:2-index} with
$k=n\geq 3$, there exists a Morse homotopy $\phi_t:M\times[0,1]\to\R$ 
without critical points of index $>n$ connecting the $J$-convex
function $\phi_0=\phi$ to $\phi_1=\psi$. Arguing as in the proof of
Theorem~\ref{subcrit-uniqueness}, always keeping the Stein
structures flexible, we construct a diffeotopy $h_t:M\times[0,1]\to
M\times[0,1]$ with $h_0=\id$ such that the functions
$\psi_t=\phi_t\circ h_t$ are $J$-convex for all $t\in[0,1]$. Then the
$J$-convex function $\psi_1=\psi\circ h_1$ is connected to $\psi$ by
the path $\psi\circ h_t$ of functions without critical points, so
$\psi_1$ and $\psi$ belong to the same path connected component of
$\EE(M)$.     
\end{proof}

We conjecture that $\II_*$ in Theorem~\ref{thm:pseudo-Stein} is an
isomorphism.


\end{document}